\setlist[enumerate]{label=\alph*),leftmargin=2\parindent}
\theoremstyle{plain}
\newtheorem{theorem}{Theorem}
\numberwithin{theorem}{section}
\newtheorem{prop}[theorem]{Proposition}
\newtheorem{lemma}[theorem]{Lemma}
\newtheorem{cor}[theorem]{Corollary}
\theoremstyle{definition}
\newtheorem{rem}[theorem]{Remark}
\newcommand{\N}{\mathbb{N}}
\newcommand{\Z}{\mathbb{Z}}
\newcommand{\R}{\mathbb{R}}
\newcommand{\RT}{\mathbb{R} \times \mathbb{T}}
\renewcommand{\phi}{\varphi}
\renewcommand{\epsilon}{\varepsilon}
\DeclarePairedDelimiterX\norm[1]\lVert\rVert{\ifblank{#1}{\cdot}{#1}}
\DeclarePairedDelimiterX\abs[1]\lvert\rvert{\ifblank{#1}{\cdot}{#1}}
\DeclarePairedDelimiter\set\{\}
\DeclarePairedDelimiter\br()
\DeclarePairedDelimiterX\intcc[1][]{\ifblank{#1}{0,1}{#1}}
\DeclarePairedDelimiterX\intco[1][){\ifblank{#1}{0,\infty}{#1}}
\DeclarePairedDelimiterX\intoc[1](]{\ifblank{#1}{-\infty,0}{#1}}
\DeclarePairedDelimiterX\intoo[1](){\ifblank{#1}{0,\infty}{#1}}
\newcommand{\absBig}[1]{\abs[\Big]{#1}}
\newcommand{\absbigg}[1]{\abs[\bigg]{#1}}
\newcommand{\absBigg}[1]{\abs[\Bigg]{#1}}
\newcommand{\setbig}[1]{\set[\big]{#1}}
\newcommand{\setBig}[1]{\set[\Big]{#1}}
\newcommand{\setbigg}[1]{\set[\bigg]{#1}}
\newcommand{\brbig}[1]{\br[\big]{#1}}
\newcommand{\brBig}[1]{\br[\Big]{#1}}
\newcommand{\brbigg}[1]{\br[\bigg]{#1}}
\newcommand{\brBigg}[1]{\br[\Bigg]{#1}}
\newcommand{\Fcal}{\mathcal{F}}
\newcommand{\Ucal}{\mathcal{U}}
\newcommand{\ew}{\newpage\noindent}
\begin{document}

\title[Strichartz estimates for gZK]
{Strichartz estimates for the generalized Zakharov-Kuznetsov equation on $ \RT $ and applications}
\author{Jakob Nowicki-Koth}

\address{J.~Nowicki-Koth: Mathematisches Institut der
Heinrich-Heine-Universit{\"a}t D{\"u}sseldorf, Universit{\"a}tsstr. 1,
40225 D{\"u}sseldorf, Germany}
\email{jakob.nowicki-koth@hhu.de}

\begin{abstract}
In this article, we address the Cauchy problem associated with the $k$-generalized Zakharov-Kuznetsov equation posed on $\R \times \mathbb{T}$. By establishing an almost optimal linear $L^4$-estimate, along with a family of bilinear refinements, we significantly lower the well-posedness threshold for all $k\geq2$. In particular, we show that the modified Zakharov-Kuznetsov equation is locally well-posed in $H^s(\mathbb{R} \times \mathbb{T})$ for all $s > \frac{11}{24}$.
\end{abstract}

\maketitle

\section{Introduction}

For $k \in \mathbb{N}$ and $s \in \R$, we consider the Cauchy problem
\[ \label{CP-$k$} \partial_tu + \partial_x \Delta_{xy} u = \pm \partial_x \brbig{u^{k+1}}, \qquad u(t=0) = u_0 \in H^{s}(\R \times \mathbb{T}) \tag{CP-$k$} \]
associated with the $k$-generalized Zakharov-Kuznetsov equation (gZK), where $u = u(t,x,y)$ is a real-valued function. For $k=1$, gZK reduces to the classical Zakharov-Kuznetsov equation (ZK), which was first derived on $\R^3$ by Zakharov and Kuznetsov \cite{Zakharov1974}, as a model for the propagation of nonlinear ion-acoustic waves in a magnetized plasma. When posed  on $\R^2$, ZK describes similar phenomena, and the derivation in this case is due to Laedke and Spatschek \cite{Laedke1982}. Regarding conserved quantities, it is well known that the mass 
\[ M(u(t)) \coloneqq \int_{\R \times \mathbb{T}} (u(t,x,y))^{2} \mathrm{d}(x,y) \] and the energy \[ E_{\pm}(u(t)) \coloneqq \frac{1}{2} \int_{\R \times \mathbb{T}} \abs{ \nabla u(t,x,y) }_2^{2} \pm \frac{2}{k+2} (u(t,x,y))^{k+2} \mathrm{d}(x,y) \]
are conserved over time, provided the solution is of sufficient regularity.
Both $M$ and $E_{\pm}$ play a central role in the well-posedness analysis of gZK, particularly in extending local to global solutions. The well-posedness theory of gZK has been extensively studied over the past decades, both on $\R^3$ and on $\R^2$. For the three-dimensional case, we refer to \cite{Pilod2015, Gruenrock2014, Gruenrock2015, Kinoshita2022, Ribaud2012, Linares2009, Herr, Kato2018}, whereas for the two-dimensional case, we refer to \cite{Faminskii, Linares2009a, Linares2011, Gruenrock2014a, Pilod2015, Kinoshita2021, Kinoshita2022, Bhattacharya2020, Biagioni2003, Farah2012, Ribaud}. The semi-periodic case, however, has not yet been studied in comparable depth and has only recently come into focus. We provide an overview of the results obtained to date: \\
In the case $k=1$, the foundational work was carried out by Linares, Pastor, and Saut \cite{Linares2010}, who established local well-posedness in $H^s(\R \times \mathbb{T})$ for all $s > \frac{3}{2}$. By developing a bilinear estimate that allows for a gain of up to $\frac{1}{2}-$ derivatives, Molinet and Pilod \cite{Pilod2015} were subsequently able to prove global well-posedness for all $s \geq 1$. Based on a refinement of said estimate, Osawa \cite{Osawa2022} succeeded in lowering the threshold for local well-posedness to $s > \frac{9}{10}$, and, in joint work with Takaoka \cite{Osawa2024}, further established global well-posedness for all $s > \frac{29}{31}$ by employing the $I$-method. More recently, Cao-Labora \cite{CaoLabora2025} developed a further refinement of said bilinear estimate, carefully adapted to the resonance function. By invoking geometric arguments, he was then able to improve the local well-posedness result to $s > \frac{3}{4}$ (or $s > \frac{1}{2}$ under an additional low-frequency condition), and in both cases established optimality up to the endpoint. For the case $k \geq 2$, Farah and Molinet \cite{Farah2024} proved the linear $L^4$-estimate

\begin{equation} \label{old L4} \norm{u}_{L_{txy}^4(\R \times \R \times \mathbb{T})} \lesssim \norm{u}_{X_{\frac{1}{6},\frac{3}{8}}(\phi)}, \end{equation}

where $X_{\frac{1}{6},\frac{3}{8}}(\phi)$ denotes the Bourgain space associated with the ZK equation, and by using this estimate in conjunction with the bilinear estimate due to Molinet and Pilod, they obtained local well-posedness for every $s > 1-$, explicitly establishing $s > \frac{5}{6}$ in the case $k=2$. Furthermore, by making use of the conservation laws, they also proved (small data) global well-posedness for every $s \geq 1$. \\
The development of increasingly sharp linear Strichartz estimates has become a central focus of current research concerning the well-posedness study of different problems posed on $\R \times \mathbb{T}$. Examples of such recent work include the contributions of Herr, Schippa, and Tzvetkov \cite{Schippa} on the semiperiodic dispersion-generalized KP-II equation; Corcho and Mallqui \cite{Corcho} on the semiperiodic NLS equation with fractional derivatives in the periodic component; and Ba\c{s}ako\u{g}lu, Sun, Tzvetkov, and Wang \cite{Basakoglu} on the semiperiodic hyperbolic NLS equations.
In this spirit, the proofs of several new linear and bilinear estimates of Strichartz type will also serve as the starting point in this article. In particular, we wish to highlight the following linear $L^4$-estimate, which we will furthermore show to be optimal up to the loss of an $\epsilon$-derivative.

\begin{theorem}
\label{L4}
Let $\epsilon > 0$ and $b > \frac{1}{2}$ be arbitrary. If $u \in X_{\epsilon, b}(\phi) $, then the following estimate holds true: \begin{equation} \label{ineq1} \norm{u}_{L_{txy}^4(\R \times \RT)} \lesssim_{\epsilon, b} \norm{u}_{X_{\epsilon, b}(\phi)}. \end{equation}

\end{theorem}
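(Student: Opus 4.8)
The plan is to pass to the bilinear $L^2$ level and to extract the gain from the flatness of the Zakharov--Kuznetsov resonance function in the $x$-frequency together with the integrality of the transverse ($\mathbb{T}$-)frequencies. Since $\norm{u}_{L^4_{txy}}^2=\norm{u^2}_{L^2_{txy}}$, it suffices to bound $\norm{u^2}_{L^2}$. Write $\phi(\xi,n)=\xi^3+\xi n^2$ for the dispersion symbol (its sign is immaterial below) and decompose $u=\sum u_{N,M,L}$ into Littlewood--Paley pieces with $\abs{\xi}\sim N$, $\abs{n}\sim M$ and modulation $\abs{\tau-\phi(\xi,n)}\sim L$. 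Expanding the square, it then suffices to establish bilinear bounds
\[
\norm{u_{N_1,M_1,L_1}\,u_{N_2,M_2,L_2}}_{L^2_{txy}}\ \lesssim\ C(N_i,M_i,L_i)\,\norm{u_{N_1,M_1,L_1}}_{L^2}\,\norm{u_{N_2,M_2,L_2}}_{L^2}
\]
which can be summed to \eqref{ineq1} by the Cauchy--Schwarz inequality and Schur's test, provided $C$ carries a factor $\min(L_1,L_2)^{1/2}$, exhibits at most $\max(L_1,L_2)^{1/4}$ growth in the larger modulation, and loses at most an arbitrarily small power of the frequencies. It is here that $b>\tfrac12$ is used — it makes the modulation sums converge — and that a positive derivative in the hypothesis is used, to absorb the logarithmic loss produced by the resonant interactions below.

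By Plancherel in $(t,x,y)$, writing the space-time Fourier transform of the product as a convolution, and the Cauchy--Schwarz inequality in the convolution variable $(\tau_1,\xi_1,n_1)$, one reduces $C(N_i,M_i,L_i)^2$ to the supremum over the output frequency $(\tau,\xi,n)$ of the measure of the set of compatible input frequencies. For fixed $n_1$ and $\xi_1$ the admissible $\tau_1$ form an interval of length $\lesssim\min(L_1,L_2)$, nonempty only when $\abs{\Phi(\xi_1)-\tau}\lesssim\max(L_1,L_2)$, where
\[
\Phi(\xi_1)\deq\phi(\xi_1,n_1)+\phi(\xi-\xi_1,n-n_1)
\]
is regarded as a function of $\xi_1$, with $(\tau,\xi,n,n_1)$ held fixed. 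Hence the interaction measure is $\lesssim\min(L_1,L_2)$ times the number of admissible integers $n_1$ times $\sup_{n_1}\abs{\{\xi_1:\abs{\Phi(\xi_1)-\tau}\lesssim\max(L_1,L_2)\}}$, and the whole problem reduces to controlling this sublevel set and this $n_1$-count.

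The structural fact that drives the estimate is that $\Phi''(\xi_1)=6\xi$, independently of $n_1$, $n$ and $\xi_1$: in the $\xi_1$-variable the resonance function is a parabola whose curvature is governed purely by the output frequency $\xi$. Consequently, for $\abs{\xi}$ bounded below, the sublevel set has measure $\lesssim\min\brbig{\min(N_1,N_2),\,(\max(L_1,L_2)/\abs{\xi})^{1/2}}$, which after the modulation sum contributes only $\min(L_1,L_2)^{1/2}$, together with a genuine power gain as soon as $\abs{\xi}\gtrsim1$. For the $n_1$-count one uses that replacing $n_1$ by $n_1\pm1$ shifts $\Phi$ by $2(\xi_1 n_1-\xi_2 n_2)+\xi$ (with $\xi_2=\xi-\xi_1$, $n_2=n-n_1$); away from the resonant locus where $\xi_1 n_1=\xi_2 n_2$ this confines the admissible integers $n_1$ to a set of bounded size, the discreteness of $n_1$ being essential.

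The main obstacle — and the point at which the torus, in contrast to $\R^2$, is what makes the estimate hold — is the regime of small $\abs{\xi}$, i.e.\ near-antipodal interactions $\xi_1\approx-\xi_2$ feeding a low output $x$-frequency: there $\Phi$ is flat, the curvature bound is useless, and the crude interaction measure genuinely degenerates. Here one localises the output $x$-frequency, observes that such interactions force both inputs up to comparable large frequencies $\abs{\xi_1}\sim\abs{\xi_2}\sim N$ and $\abs{n_1}\sim\abs{n_2}\sim M$, and reduces the count of near-resonant configurations to a lattice-point (divisor-type) estimate; it is exactly the integrality of the $\mathbb{T}$-frequencies that turns the $\tfrac14$-derivative loss which the continuous transverse variable forces in the $\R^2$ analogue \eqref{old L4} into an arbitrarily small loss, the surviving logarithmic factor being compensated by the $\epsilon$-derivative — and near-extremizers supported in this regime show that $\epsilon$ cannot be removed. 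One may alternatively run the argument by decomposing into $y$-Fourier modes, applying the one-dimensional Airy-type Strichartz estimate to each mode together with a square-function bound in $y$; the same near-antipodal resonances remain the crux. Summing the bilinear bounds dyadically over $N_i,M_i,L_i$, with the modulation sums converging because $b>\tfrac12$, then yields \eqref{ineq1}.
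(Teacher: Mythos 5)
Your high-level strategy matches the paper's: pass to a bilinear $L^2$ estimate, exploit that $\partial_{\xi_1}^2\bigl[\phi(\xi_1,n_1)+\phi(\xi-\xi_1,n-n_1)\bigr]=6\xi$ so that the $\xi_1$-sublevel sets are controlled by a convexity (van der Corput) bound, and use the integrality of $n_1$ to make the transverse count manageable. The computation $\Phi(n_1+1)-\Phi(n_1)=2(\xi_1n_1-\xi_2n_2)+\xi$ is correct. However, there is a genuine gap in how you put the $\xi_1$- and $n_1$-directions together, and you have also misidentified where the argument actually degenerates.

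First, the factorization
\[
|\text{interaction set}|\ \lesssim\ (L_1\wedge L_2)\cdot\bigl(\#\{\text{admissible }n_1\}\bigr)\cdot\sup_{n_1}\bigl|\{\xi_1:\ |\Phi(\xi_1)-\tau|\lesssim L_1\vee L_2\}\bigr|
\]
is lossy and cannot be summed to the claimed bound. The number of admissible $n_1$ is generically $\sim N_1\vee N_2$, while $\sup_{n_1}|\{\xi_1:\cdots\}|\sim\min\bigl(N_1\wedge N_2,(L_1\vee L_2)^{1/2}|\xi|^{-1/2}\bigr)$; the product carries an unremovable factor $N_1\vee N_2$ unless $|\xi|\gtrsim(N_1\vee N_2)^{2}$. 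What actually saves the estimate — and what the paper's Lemma~\ref{mes} does — is that the $\xi_1$-interval lengths $a(q_1)$ \emph{decay} as $|q_1|$ grows, so one sums $\sum_{q_1}a(q_1)$ directly by an integral comparison rather than bounding the count times the maximum. After the symmetric change of variables $\tilde\xi_1=\xi_1-\xi/2$, $\tilde q_1=q_1-q/2-h(q)$ (whose role you do not mention) the constraint becomes a level set of $p(x,y)=\xi(3x^2+y^2)+2qxy$, and the decay rate of $a(q_1)$ is governed by the coefficient $q^2-3\xi^2$, \emph{not} by $\xi$ alone. You never confront the $n_1$-sum, and the proposed appeal to a ``divisor-type lattice-point count'' is not carried out and is not obviously the right tool, since $\xi_1$ remains a continuous variable.

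Second, and as a consequence, the degenerate regime is not ``small $|\xi|$'' but $|3\xi^2-q^2|\lesssim1$, i.e.\ the output frequency lying on the lines $q\approx\pm\sqrt3\,\xi$, which occurs for $|\xi|$ of every size. When $|\xi|$ is small but $|q|$ is not, $q^2-3\xi^2\gtrsim1$ and the integral comparison converges; when $3\xi^2\approx q^2$ (even with $|\xi|$ large), the decay is absent and the sum over $q_1$ can reach size $N_1\vee N_2$. The paper isolates exactly this region and handles it by duality, reducing to the one-line bound $\int\chi_{\{|3\xi^2-q^2|\le1\}}\,\mathrm d\xi\lesssim|q|^{-1}$ whose sum over $|q|\lesssim N_1\vee N_2$ produces the logarithm absorbed by the $\epsilon$-derivative. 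Your alternative route via a $y$-Fourier decomposition and the one-dimensional Airy Strichartz estimate is essentially the Farah--Molinet argument leading to \eqref{old L4}; it costs a fixed $\tfrac16$-derivative and cannot recover the almost-lossless bound \eqref{ineq1}, so it does not serve as a fallback here.

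To repair the proposal you would need to replace the crude ``count $\times$ max'' bound by a genuine summation over $q_1$ of the $\xi_1$-sublevel lengths, recognize $q^2-3\xi^2$ (rather than $\xi$) as the parameter governing the decay of that sum, split on $|3\xi^2-q^2|\gtrless1$, and treat the degenerate region by the dual one-dimensional measure bound — which is precisely Lemma~\ref{mes} together with part~$(II)$ of the proof of Theorem~\ref{L4}.
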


For the proof of this estimate, we will follow the approach of Takaoka and Tzvetkov \cite{Takaoka2001} for the semiperiodic Schrödinger equation and consider sets of the form \[ \set{(\xi,q) \in \R \times \mathbb{Z} \ | \ c_1 \leq p(\xi,q) \leq c_2} \]
for $c_1,c_2 \in \R$ and a specific polynomial $p$. The simple polynomial structure of
$p$ will then allow us to sharply estimate the measure of these sets, and a subsequent dyadic summation will yield the desired estimate. Moreover a modification of the proof of Theorem \ref{L4} will yield a family of bilinear refinements of \eqref{ineq1}, which in certain frequency ranges allow a gain of up to $\frac{1}{4}-$ derivatives. \\
As a first application of the newly obtained linear estimates, we will then verify the well-posedness results summarized in the following theorem.

\begin{theorem}
\label{general LWP}
Let $k \in \N^{\geq2}$, define \[ s_{0}(k) \coloneqq \begin{cases} \frac{1}{2} & \ \text{if} \ k=2 \\ \frac{8}{15} & \ \text{if} \ k=3 \\  1-\frac{16}{9k} & \ \text{if} \ k\geq 4 \end{cases}, \] and let $s > s_{0}(k)$ be arbitrary. Then for every $u_0 \in H^{s}(\RT)$, there exists a $\delta = \delta(\norm{u_0}_{H^{s_0(k)+}(\RT)}) > 0$ and a unique solution \[ u \in X_{s,\frac{1}{2}+}^\delta(\phi) \] to \eqref{CP-$k$}. Moreover, for every $\tilde{\delta} \in \intoo{0,\delta}$, there exists a neighborhood $\Ucal \subseteq H^{s}(\RT)$ of $u_0$ such that the flow map \[ S: H^{s}(\RT) \supseteq \Ucal \rightarrow X_{s,\frac{1}{2}+}^{\tilde{\delta}}(\phi), \ u_0 \mapsto S(u_0) \coloneqq u \] (data upon solution) is smooth.

\end{theorem}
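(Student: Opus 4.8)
The plan is to run a standard fixed-point argument in the restricted Bourgain space $X_{s(k),\frac{1}{2}+}^\delta(\phi)$, so the whole theorem reduces to establishing a multilinear estimate for the Duhamel term, namely
\[
\normBig{\int_0^t S_\phi(t-t')\,\partial_x\brbig{u^{k+1}}(t')\d t'}_{X_{s(k),\frac{1}{2}+}^\delta}
\lesssim \delta^{\theta}\,\norm{u}_{X_{s(k),\frac{1}{2}+}^\delta}^{k+1}
\]
for some $\theta>0$, together with the analogous difference estimate; the smoothness of the flow map and uniqueness then follow automatically from the analyticity of the nonlinearity and the contraction principle, as is routine for semilinear problems. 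The transfer of the $\partial_x$ derivative costs (essentially) one derivative of the highest-frequency factor, and the $X^{b}$ with $b=\frac12+$ lets us pass to the dual formulation where, after a Littlewood--Paley decomposition, one must bound $\sum \int \prod_{j=0}^{k+1} P_{N_j}u_j$ with suitable weights $N_j^{s}$ or $N_j^{-s}$ attached.

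The core of the argument is the interpolation between the linear $L^4$-estimate \eqref{ineq1} and its bilinear refinements (the ``family of bilinear refinements'' announced after Theorem \ref{L4}, which I am assuming as given). Writing $k+2$ factors (the $k+1$ from the nonlinearity plus the dual test function) and grouping them into pairs, a product of $k+2$ functions in $L^{k+2}_{txy}$ is controlled by iterated Hölder together with the $L^4$ bound \eqref{ineq1} on each pair; for $k=2$ one has exactly four factors and uses $L^4\times L^4\times L^4\times L^4$ directly, while for $k\ge3$ one peels off factors in $L^4$ and absorbs the remaining low-frequency factors in $L^\infty$ via Bernstein, paying $N^{1/2}$ per such factor. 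The precise exponents $s_0(k)$ — $\frac12$ for $k=2$, $\frac{8}{15}$ for $k=3$, $1-\frac{16}{9k}$ for $k\ge4$ — are then exactly what is needed so that, after summing the geometric series in the dyadic parameters $N_0,\dots,N_{k+1}$, the largest frequency's derivative loss is compensated: in the hardest ``high-high'' interactions one invokes the bilinear refinement to gain up to $\frac14-$ derivatives off one of the two colliding high-frequency factors, and this $\frac14$-gain, fed through the combinatorics of $k+1$ nonlinear factors, yields precisely $1-\frac{16}{9k}$.

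Concretely, the steps in order are: (i) set up the contraction map $\Phi(u) = \psi(t)S_\phi(t)u_0 \mp \psi(t)\int_0^t S_\phi(t-t')\partial_x(u^{k+1})$ on a ball in $X_{s(k),\frac12+}^\delta$, recording the standard linear estimates $\norm{\psi S_\phi u_0}_{X_{s,b}^\delta}\lesssim\norm{u_0}_{H^s}$ and the Duhamel energy estimate with its $\delta^{\theta}$ gain; (ii) dualize and dyadically decompose the nonlinear term; (iii) in each frequency configuration, distribute the $k+2$ factors via Hölder, applying \eqref{ineq1}, the bilinear refinements, and Bernstein as dictated by which factors carry high versus low frequency; (iv) check that the resulting powers of the dyadic frequencies sum for every $s>s_0(k)$, which is where the arithmetic defining $s_0(k)$ gets verified case-by-case in $k$; (v) conclude via Banach fixed point, and note the difference estimate is identical with one factor replaced by $u-v$. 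The main obstacle I anticipate is step (iii)--(iv): correctly identifying, among the many ways to pair up $k+2$ factors and the several bilinear refinements available in different frequency ranges, the one pairing that is sharp in the worst-case high-high-...-high configuration, and then verifying the non-trivial inequality that the exponent $1-\frac{16}{9k}$ (rather than the naive $1-\frac{c}{k}$ one would get from the plain $L^4$ estimate alone) is genuinely attained — this is the step where the bilinear $\frac14-$ gain must be threaded through the full nonlinearity without loss, and it is presumably why the $k=2$ and $k=3$ cases have to be stated separately with their own ad hoc exponents $\frac12$ and $\frac8{15}$.
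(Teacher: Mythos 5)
Your overall framework (reduce to a multilinear $X_{s,b}$ estimate, dualize, dyadically decompose, distribute factors via H\"older, sum the geometric series) matches the paper's strategy, and you correctly identify that the heavy lifting is the multilinear estimate, not the fixed-point machinery. However, the specific estimates you propose to use are not the ones the paper actually needs, and this leads to a genuine gap in the argument.

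The bilinear refinements \eqref{bilinrefinement}, which gain up to $\frac14-$ of an $x$-derivative on the product, are \emph{not} used in the proof of Theorem \ref{general LWP} at all; they are reserved for the improved mZK result, Theorem \ref{LWP mZK}. For the general $k$ case, the paper's key tools are (a) the Molinet--Pilod bilinear estimate \eqref{OG bilin} and its dual form \eqref{MP dual}, which gain up to $\frac12-$ derivatives when the two mixed frequencies are widely separated, and (b) the \emph{linear} Strichartz estimates: the new $L^4$ bound \eqref{ineq1}, the optimized $L^5$ and $L^6$ estimates of Corollary \ref{optimized Lp cor} and \eqref{optimized L5}--\eqref{optimized L6}, and in particular the Airy $L^5$ and $L^6$ estimates \eqref{Airy L5}, \eqref{Airy L6 Xsb} which trade an $\frac1{10}$ or $\frac16$ gain in $I_x$ against a loss in $I_y$. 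Your claim that the exponent $1-\frac{16}{9k}$ is ``the bilinear $\frac14-$ gain\dots\ fed through the combinatorics of $k+1$ nonlinear factors'' is therefore not the mechanism. In the crucial all-comparable configuration (case (iii.3.1)), the paper applies a \emph{dual} Airy $L^6$ estimate to strip $\frac16$ of an $x$-derivative off the product, then places one factor in $L^6$ via the Airy estimate (cost $\frac13$ in $y$), four more factors in $L^{6+}$ via the optimized $L^6$ estimate (cost $\frac29+$ each), and the remaining $k-4$ factors in $L^\infty$ via the Sobolev embedding $H^{1+}(\RT)\hookrightarrow L^\infty$ and \eqref{Linfty to L2} (cost $1+$ each, not $\frac12$ as you state — the domain is two-dimensional). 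Balancing $\frac{s}{5}+\frac45-\frac{64}{45k} \le s$ is exactly what produces $s_0(k)=1-\frac{16}{9k}$. The $k=2$ and $k=3$ thresholds come from the analogous balance with the $L^4$ and the Airy $L^5$/optimized $L^5$ estimates, respectively. As written, your proposal cannot reproduce the stated thresholds, both because it invokes an estimate that is not applicable here (the $\frac14$ gain only acts on the product, not on each factor, and it requires a non-degeneracy hypothesis $\abs{3\xi^2-q^2}\gtrsim\abs{\xi}^\alpha$ that can fail in the worst configurations) and because the Bernstein/Sobolev cost per low-frequency factor is understated by a factor of two.

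Finally, a smaller omission: your step (ii) ``dualize and dyadically decompose'' needs the widely-separated cases to be handled by the Molinet--Pilod multiplier $MP(\cdot,\cdot)$, which the paper pairs up carefully (and makes duality-compatible via \eqref{MP dual}); without that tool the widely-separated regime would already require $s>1-$, which is the pre-existing Farah--Molinet threshold, and your approach would not improve on it.
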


Subsequently, in the case $k=2$ (which corresponds to the modified Zakharov-Kuznetsov equation), we will carry out a detailed case-by-case analysis using the bilinear refinements of \eqref{ineq1} and the resonance function, and push the local well-posedness threshold in this case down to $s > \frac{11}{24}$.

\begin{theorem}
\label{LWP mZK}
Let $s > \frac{11}{24}$ be arbitrary. Then for every $u_0 \in H^{s}(\RT)$, there exists a $\delta = \delta(\norm{u_0}_{H^{\frac{11}{24}+}(\RT)}) > 0$ and a unique solution \[ u \in X_{s,\frac{1}{2}+}^\delta(\phi) \] to \begin{equation*} \partial_t u + \partial_x \Delta_{xy} u = \pm \partial_x \brbig{u^3}, \quad u(t=0) = u_0. \end{equation*}
Moreover, for every $\tilde{\delta} \in \intoo{0,\delta}$, there exists a neighborhood $\Ucal \subseteq H^{s}(\RT)$ of $u_0$ such that the flow map \[ S: H^{s}(\RT) \supseteq \Ucal \rightarrow X_{s,\frac{1}{2}+}^{\tilde{\delta}}(\phi), \ u_0 \mapsto S(u_0) \coloneqq u \] (data upon solution) is smooth.

\end{theorem}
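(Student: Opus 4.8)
The plan is to run a contraction-mapping argument on the Duhamel operator
\[
\Phi_{u_0}(u)(t) \deq \psi(t) S(t) u_0 \mp \psi(t) \int_0^t S(t-t') \partial_x\brbig{u(t')^3} \d t',
\]
in the space $X_{s,\frac12+}^\delta(\phi)$, where $\psi$ is a smooth time cut-off and $S(t)$ is the linear propagator of $\partial_t + \partial_x\Delta_{xy}$. By the standard linear and Duhamel estimates in Bourgain spaces adapted to $\phi$, the problem reduces entirely to a single trilinear estimate: one must show that for some $b' \in \intoo{-\frac12,0}$ and $s > \frac38$,
\[
\normBig{\partial_x\brbig{u_1 u_2 u_3}}_{X_{s,b'}(\phi)} \lesssim \prod_{j=1}^3 \norm{u_j}_{X_{s,\frac12+}(\phi)},
\]
together with the accompanying difference estimate that yields uniqueness and smooth dependence on the data (the latter being automatic once the map is shown to be a smooth — indeed polynomial — contraction). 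The factor $\delta$ then enters through the usual $\delta^{\theta}$-gain in the time-localized Duhamel estimate, giving $\delta = \delta(\norm{u_0}_{H^{3/8+}})$.

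The core of the argument is the trilinear estimate, and here the strategy is the dyadic Littlewood--Paley decomposition in both spatial frequency $(\xi,q)$ and modulation. One writes $u_j = \sum_{N_j} P_{N_j} u_j$, orders the frequencies $N_1 \geq N_2 \geq N_3$, and distinguishes cases according to the size of the largest output frequency $N$ relative to the $N_j$ and according to which of the three modulation variables (including the output modulation) is largest. In the high-modulation regimes one simply places the high-modulation factor in $L^2_{txy}$, uses Theorem \ref{L4} (the linear $L^4$ estimate with an $\epsilon$-loss) on the remaining two factors together with Sobolev embedding in the periodic variable, and absorbs the resulting powers of $N$. The genuinely delicate regimes are the nearly resonant ones, where all three modulations are small; there one invokes the bilinear refinements of \eqref{ineq1} announced after Theorem \ref{L4} — these give a gain of up to $\frac14-$ derivatives in suitable frequency configurations — and one must feed in precise information about the resonance function
\[
\phi(\xi_1,q_1) + \phi(\xi_2,q_2) + \phi(\xi_3,q_3) - \phi(\xi_1+\xi_2+\xi_3,\,q_1+q_2+q_3),
\]
whose factorization controls the measure of the resonant set. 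Balancing the derivative loss from $\partial_x$ (one power of $N$ in the worst case $N \sim N_1$) against the combined gain of the bilinear $L^4$ refinements and the modulation weights is exactly what forces the threshold $s = \frac38$: this is where the detailed case-by-case bookkeeping lives.

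The main obstacle I expect is the high-high-to-low interaction combined with the output derivative, i.e.\ the case $N_1 \sim N_2 \gg N_3$ with the output frequency $N$ possibly as small as one likes. Here the $\partial_x$ loss is mild but the derivative surplus $\norm{u_1}\norm{u_2}$ carries two large frequencies that must be transferred to something small, so one cannot afford to lose anything in the $L^4$ estimates; this is precisely where the sharpness of the bilinear refinement — and the geometric/measure-theoretic analysis of sets of the form $\set{(\xi,q) : c_1 \leq p(\xi,q) \leq c_2}$ used to prove it — is indispensable, and where a naive application of \eqref{ineq1} alone would only reach a worse exponent such as the $s > \frac12$ of Theorem \ref{general LWP}. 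A secondary technical point is handling the non-translation-invariant interplay between the continuous frequency $\xi$ and the discrete frequency $q$ when summing the dyadic pieces: one must be careful that the $\ell^2$-summation over $q$ does not cost extra powers of $N$, which is ensured by keeping an $\epsilon$-room in the Bourgain exponent $s > \frac38$ throughout, in the same spirit as the $\epsilon$-loss already present in Theorem \ref{L4}.
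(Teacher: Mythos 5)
Your high-level reduction (Duhamel plus contraction mapping plus a single trilinear estimate in $X_{s,b}$) and your broad strategy (dyadic decomposition, $L^4$ estimate \eqref{ineq1}, bilinear refinements, resonance function) match the paper. But you misidentify the hard regime. You single out $N_1 \sim N_2 \gg N_3$, but in the paper that configuration is cheap: whenever $\abs{(\xi_1,q_1)} \gg \abs{(\xi_3,q_3)}$ one invokes the Molinet--Pilod bilinear estimate \eqref{OG bilin}, which gains $\frac12-$ derivatives and already yields $s>\frac14$ there (case (i) of Proposition \ref{mZK prop}, which refers back to case (i.2) of Proposition \ref{prop general lwp}). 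The threshold $s>\frac38$ is forced by the opposite regime, $\abs{(\xi_1,q_1)} \sim \abs{(\xi_2,q_2)} \sim \abs{(\xi_3,q_3)} \gg 1$, where the MP estimate gives nothing because the mixed frequencies are not separated.

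Inside that truly hard regime your sketch is also too coarse to close the argument. The bilinear refinement \eqref{bilinrefinement} only helps when some pairwise output frequency $(\xi_i+\xi_j, q_i+q_j)$ satisfies $\abs{3(\xi_i+\xi_j)^2-(q_i+q_j)^2} \gtrsim \abs{\xi_i+\xi_j}^\alpha$. When this fails for every relevant pair, the paper extracts from those failures the algebraic constraints $\abs{\sqrt{3}\xi_i \pm q_i} \ll 1$ (resp.\ $\ll \abs{\xi_{\max}}^{-1/2}$), performs a sign analysis of the $(\xi_i, q_i)$, and then rewrites the resonance function as $R = 12(\xi_1+\xi_2)(\xi_1+\xi_3)(\xi_2+\xi_3) + \sum_i \xi_i(q_i^2-3\xi_i^2)$ so that the mKdV-like cubic part dominates; this gives $\abs{R}\gtrsim \abs{\xi_{\max}}^{3/2}$ and lets the modulation weights absorb derivatives. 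Finally, in the residual wedge where even this fails (e.g.\ $\abs{q_i+q_j}$ very small), the paper switches to the anisotropic Airy estimate $\norm{I_x^{1/8}u}_{L^4_{tx}L^2_y}\lesssim\norm{u}_{X_{0,3/8+}}$ (estimate \eqref{Airy L4 from L2}) together with a direct $L^1_q$ bound on the narrow $q$-support of a pairwise product. None of this appears in your sketch. Without it, a bookkeeping attempt that applies only \eqref{ineq1} and \eqref{bilinrefinement} does not close below roughly $s>\frac12$, which is exactly the threshold the paper singles out as what the ``simple alternative'' achieves.
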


\subsection*{Acknowledgments}
This work is part of the author's PhD thesis, written under the supervision of Axel Grünrock. The author gratefully acknowledges his continued guidance, the suggestion of this line of problems, and numerous insightful discussions.

\section{Preliminaries}

We start by fixing notation and introducing the function spaces that will be used in this paper.

\subsection{Notation}

If $A$ is a measurable subset of $\R \times \Z$, $\R \times \R \times \Z$ or $\R^n$, then $\abs{A}$ will denote its Haar measure. For an arbitrary number $x \in \mathbb{R}$, $x +$ will denote a number slightly greater than $x$, while $x-$\footnote{In the context of interpolation, the notation $\infty -$ will be used to indicate a number that is very large, but still finite.} will denote a number slightly smaller than $x$. If we have two real numbers, $x$ and $y$, then their maximum will be denoted by $x \lor y$ and their minimum by $x \land y$. If $x$ and $y$ are both positive, then $x \lesssim y$ will mean that there exists some constant $c > 0$, such that $x \leq cy $ holds. In the case that the constant $c$ can be chosen particularly close to $0$, we will write $x \ll y$ and if both $x \lesssim y$ and $y \lesssim x$ hold, we will write $x \sim y$. Furthermore, we will denote the absolute value of a real number $x$ by $\abs{x}$, while for a vector $x = (x_1,x_2) \in \R^2$, the Euclidean norm will be denoted by $\abs{x}_2$.  Using the Euclidean norm, we define the Japanese brackets as $ \langle x \rangle \coloneqq (1+\abs{x}_2^2)^\frac{1}{2}$. For an admissible function $f = f(t,x,y)$, where $(t,x,y) \in \R^2 \times \mathbb{T}$, its space-time Fourier transform will be denoted by $\hat{f}$, and its inverse space-time Fourier transform by $\check{f}$. If we are interested in the partial Fourier transform of such a function, we indicate this by using subscripts, e.g., in $\Fcal_{x}f$ and $\Fcal_{xy}^{-1}f$. We use the Fourier transform to define the Bessel and Riesz potential operators of order $-s$, where $s$ is a real number:
\[ J^{s}f \coloneqq \mathcal{F}_{xy}^{-1} \langle (\xi,q) \rangle^{s} \mathcal{F}_{xy}f, \qquad I^{s}f \coloneqq \mathcal{F}_{xy}^{-1}  \abs{(\xi,q)}_2^{s} \mathcal{F}_{xy}f. \] If we wish to use $J$ and $I$ with respect to different combinations of variables, we specify them by using subscripts as well. We also make use of the Fourier transform to define the unitary group $(e^{-\partial_x \Delta_{xy} t})_{t \in \R}$ associated with the linear part of gZK:
\[ e^{-\partial_x \Delta_{xy} t}f \coloneqq \mathcal{F}_{xy}^{-1} e^{it\phi(\xi,q)} \mathcal{F}_{xy}f. \] Here, $\phi(\xi,q) \coloneqq \xi(\xi^{2}+q^{2})$ denotes the corresponding phase function. For later well-posedness considerations related to the modified Zakharov-Kuznetsov equation, it will be necessary to work with the associated resonance function $R$, defined by \[ R(\xi,q,\xi_1,q_1,\xi_2,q_2) \coloneqq \phi(\xi,q) - \phi(\xi_1,q_1)-\phi(\xi_2, q_2) - \phi(\xi-\xi_1-\xi_2,q-q_1-q_2). \] Following the approach of Molinet and Pilod in \cite{Pilod2015}, it will also be useful to introduce Littlewood-Paley projectors adapted to $R$: Let $(\xi,q) \in \R \times \Z$. We define the following dilated quantity $\abs{(\xi,q)} \coloneqq (3\xi^2+q^2)^{\frac{1}{2}}$. Then, for a fixed smooth cut-off function $\mu$ with \[ \mu \in C_{0}^{\infty}(\mathbb{R}),\quad 0 \leq \mu \leq 1, \quad \mu |_{[-\frac{5}{4},\frac{5}{4}]} = 1 \quad \text{and} \quad \text{supp}(\mu) \subseteq \intcc[\bigg]{-\frac{8}{5},\frac{8}{5}}, \] we set \[ \psi (\xi) \coloneqq \mu (\xi) - \mu (2 \xi) \] and define \[ \psi_{1}(\xi,q) \coloneqq \mu (\abs{(\xi,q)})  \quad \text{and} \quad \eta_1(\tau, \xi, q) \coloneqq \mu (\tau - \phi(\xi,q)), \] as well as \[ \psi_{2^{n}} (\xi,q) \coloneqq \psi(2^{-n} \abs{(\xi,q)}) \quad \text{and} \quad \eta_{2^n}(\tau, \xi, q) \coloneqq \psi(2^{-n} (\tau - \phi(\xi, q))), \] where $n \in \mathbb{N}$.
 If we now replace $2^{n}$ with $N$, we obtain \[ \text{supp}(\psi_{1}) \subseteq \setbigg{ \abs{(\xi,q)} \leq \frac{8}{5}} \quad \text{and} \quad \text{supp}(\psi_{N}) \subseteq \setbigg{ \frac{5}{8}N \leq \abs{(\xi,q)} \leq \frac{8}{5}N }, \quad N \geq 2 \] and this construction provides \[ \sum_{N \in \{2^{n} \ | \ n \in \mathbb{N}_{0} \}} \psi_{N}(\xi,q) = 1 \qquad \forall  (\xi,q) \in \mathbb{R} \times \mathbb{Z}. \] The Littlewood-Paley projectors are then given by \[  P_{N}f \coloneqq \mathcal{F}_{xy}^{-1}\psi_{N} \mathcal{F}_{xy}f, \quad \text{and} \quad Q_Lf \coloneqq \Fcal_{txy}^{-1} \eta_L \Fcal_{txy}f, \qquad N, L \in \{ 2^{n} \ | \  n \in \mathbb{N}_{0} \}. \]

\subsection{Function spaces}

For $s \in \mathbb{R}$ and $X \in \{ \mathbb{R}, \mathbb{T}, \mathbb{R} \times \mathbb{T} \}$, $H^{s}(X)$ will denote the usual Sobolev-space, endowed with the norm $\lVert \cdot \rVert_{H^{s}(X)} \coloneqq \lVert J^{s} \cdot \rVert_{L^{2}(X)}$, while its homogeneous version will be denoted by $\dot{H}^{s}(X)$ and equipped with the norm $\lVert \cdot \rVert_{\dot{H}^{s}(X)} \coloneqq \lVert I^{s} \cdot \rVert_{L^{2}(X)}$. We will often replace the underlying space $X$ in the notation with variable subscripts, as in  $H^{s}(\RT) = H^s_{xy}$. Now let $T \leq 1$ be a positive real number and $1\leq p,q \leq \infty$. Then, the mixed Lebesgue spaces such as $L^{p}_{tx}L^{q}_{y}$ and $L_{Ty}^{p}L_{x}^{q}$ will be endowed with the following norms: \[ \lVert f \rVert_{L_{tx}^{p}L_{y}^{q}} \coloneqq \brbigg{ \int_{\mathbb{R}^2} \lVert f(t,x,\cdot) \rVert_{L^{q}(\mathbb{T})}^{p}  \mathrm{d}(t,x) }^{\frac{1}{p}} \] and \[ \lVert f \rVert_{L_{Ty}^{p}L_{x}^{q}} \coloneqq \brbigg{ \int_{\intcc{-T,T} \times \mathbb{T}} \lVert f(t,\cdot,y) \rVert_{L^{q}(\mathbb{R})}^{p} \mathrm{d} (t,y) }^{\frac{1}{p}}, \]
with the obvious modifications in the case $p=\infty$. For later well-posedness considerations, we further introduce the semiperiodic Bourgain spaces $X_{s,b}(\phi)$, with regularity indices $s,b \in \mathbb{R}$. We equip these with the norm \[ \lVert f \rVert_{X_{s,b}(\phi)} \coloneqq \brbigg{ \int_{\mathbb{R}} \int_{\mathbb{R}} \sum_{q \in \Z} \langle (\xi,q) \rangle^{2s} \langle \tau - \phi(\xi,q) \rangle^{2b} \lvert \hat f(\tau,\xi,q) \rvert^{2}  \mathrm{d}(\tau,\xi) }^{\frac{1}{2}}, \] where $\phi(\xi,q) = \xi(\xi^2+q^2)$ refers to the phase function defined earlier. Lastly, we define the restriction norm spaces $X_{s,b}^{\delta}(\phi)$ as the sets of all restrictions of $X_{s,b}(\phi)$-functions to $ \intcc{-\delta, \delta} \times \RT$, where $\delta$ is a positive real number. These spaces will be equipped with the norm \[ \lVert f \rVert_{X_{s,b}^{\delta}(\phi)} \coloneqq \inf \setBig{ \lVert \widetilde{f} \rVert_{X_{s,b}(\phi)} \ \Big| \ \widetilde{f} |_{ \intcc{-\delta,\delta} \times \RT} = f \ \text{and} \ \widetilde{f} \in X_{s,b}(\phi) }. \] Since the phase function $\phi$ won't vary throughout this paper, we will just write $X_{s,b}$ and $X_{s,b}^{\delta}$, respectively.

\section{Linear and bilinear Strichartz estimates}

This section will be devoted to establishing the linear and bilinear Strichartz estimates needed for the proofs of Theorem \ref{general LWP} and Theorem \ref{LWP mZK}. We begin with the proof of a slightly sharpened version of a bilinear estimate that goes back to Molinet and Pilod (see Proposition 3.6 in \cite{Pilod2015}). This estimate allows for a gain of $\frac{1}{2}-$ derivatives for widely separated mixed frequencies $(\xi_1,q_1)$ and $(\xi_2,q_2)$.

\begin{prop}

\label{MP prop}

Let $\epsilon > 0$ and $b > \frac{1}{2}$ be arbitrary. Furthermore, we define the bilinear Fourier multiplier $MP(\cdot, \cdot)$ by its Fourier transform \begin{align*} &\widehat{MP(u,v)}(\tau,\xi,q) \coloneqq \\ & \int_{\R^2} \sum_{\substack{q_1 \in \Z \\ \ast}} \abs{\abs{(\xi_1,q_1)}^2 - \abs{(\xi_2,q_2)}^2}^\frac{1}{2} \hat{u}(\tau_1,\xi_1,q_1) \hat{v}(\tau_2,\xi_2,q_2) \mathrm{d}(\tau_1,\xi_1), \end{align*} where $\ast$ indicates the convolution constraint $(\tau,\xi,q) = (\tau_1+\tau_2,\xi_1+\xi_2,q_1+q_2)$. Then, the following estimate holds for all $J_{y}^{\frac{1}{2}+\epsilon}u, v \in X_{0,b}$: \begin{equation} \label{OG bilin} \norm{MP(u,v)}_{L_{txy}^2} \lesssim_{\epsilon, b} \norm{J_{y}^{\frac{1}{2}+\epsilon}u}_{X_{0,b}} \norm{v}_{X_{0,b}}. \end{equation}

\end{prop}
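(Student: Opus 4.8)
The plan is to reduce \eqref{OG bilin} to an $L^2$--duality estimate and exploit the algebraic identity for $\abs{(\xi_1,q_1)}^2-\abs{(\xi_2,q_2)}^2$ under the convolution constraint. First I would decompose the inputs into frequency pieces $u=\sum_N P_N u$, $v=\sum_M P_M v$; the multiplier symbol $\abs{\abs{(\xi_1,q_1)}^2-\abs{(\xi_2,q_2)}^2}^{1/2}$ is essentially supported on dyadic blocks, so after a Littlewood--Paley decomposition it suffices to prove the estimate for pieces with localized frequencies, keeping careful track of the gain from the symbol versus the loss from the summation over $N,M$. By Plancherel and duality we test against a function $w\in L^2_{txy}$ with $\norm{w}_{L^2}=1$ and must bound
\[
\int_{\R^2}\sum_{q\in\Z}\int_{\R^2}\sum_{\substack{q_1\in\Z\\ \ast}} \abs{\abs{(\xi_1,q_1)}^2-\abs{(\xi_2,q_2)}^2}^{1/2}\, \abs{\hat u}\,\abs{\hat v}\,\abs{\hat w}.
\]

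The key algebraic point is that, writing $\abs{(\xi,q)}_{\mathrm{dil}}^2 = 3\xi^2+q^2$ or rather tracking the genuine Euclidean quantities, the difference $\abs{(\xi_1,q_1)}^2-\abs{(\xi_2,q_2)}^2 = (\xi_1^2+q_1^2)-(\xi_2^2+q_2^2)$ factors under $\xi=\xi_1+\xi_2$, $q=q_1+q_2$ as $(\xi_1-\xi_2)\xi + (q_1-q_2)q$, which is linear in the output variables $(\xi,q)$ for fixed difference variables, or symmetrically linear in $(\xi_1-\xi_2,q_1-q_2)$ for fixed sum. This means the weight can be absorbed into a $J^{1/2}$ applied either to the output or to one input; the factor $J_y^{1/2+\epsilon}u$ on the right-hand side is exactly what is needed to dominate the $q$--part of this linear form, with the $\epsilon$ providing summability of the dyadic series. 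I would then follow Molinet--Pilod's Proposition 3.6: after this reduction one is left with a trilinear $L^2\times L^2\times L^2$ convolution estimate which is handled by the standard $X_{0,b}$ Strichartz machinery (Cauchy--Schwarz in the modulation variables using $b>0$ only through an $L^\infty_\tau L^2$ versus $L^2$ trade, then a fixed-time $L^2$ bilinear bound coming from the transversality of the ZK surfaces, cf.\ the $L^4$ estimate of Theorem~\ref{L4}).

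Concretely the ordered steps are: (1) Littlewood--Paley decompose all three functions and reduce to frequency-localized pieces, noting the symbol forces $\max(N,M)\sim$ the larger frequency; (2) use the factorization $(\xi_1^2+q_1^2)-(\xi_2^2+q_2^2) = (\xi_1-\xi_2)(\xi_1+\xi_2)+(q_1-q_2)(q_1+q_2)$ to bound the symbol by a sum of terms each of which is a product of a $y$--frequency (or $x$--frequency) raised to a power $\le \frac12$ applied to appropriate factors; (3) dispatch the $x$--frequency contribution using the linear $L^4$ Strichartz estimate \eqref{ineq1} twice (Theorem~\ref{L4}), which absorbs a full derivative loss split as $\frac14+\frac14$ — here the sharpened $\epsilon$--loss version is what yields the clean $b>0$ hypothesis rather than $b>\frac12$; (4) dispatch the $y$--frequency contribution using the one-dimensional periodic structure, where $\norm{\cdot}_{L^\infty_y}\lesssim \norm{J_y^{1/2+}\cdot}_{L^2_y}$ is the relevant Sobolev embedding on $\mathbb T$ — this is the source of the $J_y^{1/2+\epsilon}$ weight; (5) perform the dyadic summation over $N,M$, which converges because of the extra $\epsilon$. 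The main obstacle I anticipate is step (3)--(4): making the splitting of the symbol compatible with distributing the available derivatives so that \emph{both} the $x$-- and $y$--contributions close simultaneously, in particular verifying that in the regime $M\gg N$ (high $y$--frequency on $v$) the $\frac12$ gain from the symbol lands on the $v$ factor in a usable way while still leaving enough room, since $v$ carries no $J_y$ weight on the right-hand side. This is precisely the case distinction that forces the asymmetric placement of $J_y^{1/2+\epsilon}$ on $u$ only, and getting the bookkeeping right there — rather than any single hard inequality — is where the care is needed.
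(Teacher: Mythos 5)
Your route does not match the paper's, and more importantly it would not close. The paper's proof (following Molinet--Pilod) uses no Littlewood--Paley decomposition and no Strichartz estimates: after Parseval and a single Cauchy--Schwarz one reduces to bounding, uniformly in $(\tau,\xi,q)$, a quantity
$\sum_{q_1}\langle q_1\rangle^{-1-2\epsilon}\iint \absbig{\abs{(\xi_1,q_1)}^2-\abs{(\xi_2,q_2)}^2}\,\langle\tau_1-\phi(\xi_1,q_1)\rangle^{-2b}\langle\tau_2-\phi(\xi_2,q_2)\rangle^{-2b}\,\mathrm{d}\tau_1\,\mathrm{d}\xi_1$.
The decisive fact is the identity $\partial_{\xi_1}\bigl[\phi(\xi_1,q_1)+\phi(\xi-\xi_1,q-q_1)\bigr]=(3\xi_1^2+q_1^2)-(3\xi_2^2+q_2^2)=\abs{(\xi_1,q_1)}^2-\abs{(\xi_2,q_2)}^2$ --- note that $\abs{(\xi,q)}$ here is the paper's \emph{dilated} quantity $(3\xi^2+q^2)^{1/2}$ fixed in the Preliminaries, not the Euclidean norm you took it to be. Hence, after the $\tau_1$-integration via Ginibre--Tsutsumi--Velo, the substitution $\tilde\xi_1=\tau-\phi(\xi_1,q_1)-\phi(\xi-\xi_1,q-q_1)$ has Jacobian exactly the squared multiplier; the weight cancels completely and one is left with $\sum_{q_1}\langle q_1\rangle^{-1-2\epsilon}\int\langle\tilde\xi_1\rangle^{-2b}\mathrm{d}\tilde\xi_1<\infty$ for $b>\frac12$. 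The $J_y^{1/2+\epsilon}$ enters solely to make the $q_1$-sum converge; it is not a Sobolev embedding on $\mathbb{T}$.

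Your scheme --- splitting the symbol into an $x$- and a $y$-contribution, absorbing each into Bessel weights, and then invoking the linear $L^4$-estimate of Theorem~\ref{L4} twice --- cannot produce \eqref{OG bilin}. The $x$-piece of your factorization already costs roughly a full $x$-derivative distributed over the two inputs, yet the right-hand side of \eqref{OG bilin} carries no $J_x$ weight at all. And Theorem~\ref{L4} cannot buy that back: on $\RT$ the $L^4$-estimate \emph{loses} $\epsilon$ derivatives rather than gaining $\tfrac14$ (Proposition~\ref{failprop} shows it cannot gain any), so you appear to be conflating it with the $\R^2$ Zakharov--Kuznetsov $L^4$-estimate. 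Carrying out your plan would at best yield $\lesssim\norm{u}_{X_{\frac12+,\,b}}\norm{v}_{X_{\frac12+,\,b}}$, i.e.\ the \emph{trivial} estimate \eqref{trivial MP}, not Proposition~\ref{MP prop}. The whole content of the proposition is a bilinear gain that is invisible to products of linear Strichartz estimates, obtained precisely through the change-of-variables cancellation above. (Also, your remark that a sharpened $L^4$-estimate is what yields the $b>0$ hypothesis is not what happens: the proof given in the paper requires $b>\frac12$.)
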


\begin{proof}

We fix $\epsilon > 0$ and $b \coloneqq \frac{1}{2}+ \epsilon' > \frac{1}{2}$. By employing Parseval's identity, the Cauchy-Schwarz inequality and Fubini's theorem, we obtain \begin{align*} \norm{MP(u,v)}_{L_{txy}^2} & \lesssim \brBigg{\sup_{(\tau,\xi,q) \in \R \times \R \times \Z} c_0^\frac{1}{2}} \norm{J_{y}^{\frac{1}{2}+\epsilon}u}_{X_{0,b}} \norm{v}_{X_{0,b}},  \end{align*} where \begin{align*} c_0 &\coloneqq \int_{\R^2} \sum_{\substack{q_1 \in \Z \\ \ast}} \abs{\abs{(\xi_1,q_1)}^2 - \abs{(\xi_2,q_2)}^2} \langle q_1 \rangle^{-1-2\epsilon} \langle \tau_1 - \phi(\xi_1,q_1) \rangle^{-1-2\epsilon'} \\ & \ \ \ \cdot \langle \tau_2 - \phi(\xi_2,q_2) \rangle^{-1-2\epsilon'} \mathrm{d} (\tau_1,\xi_1), \end{align*} and for the integration over $\tau_1$, Lemma 4.2 in \cite{Ginibre1997} yields that \begin{align*} &\int_{\R} \langle \tau_1 - \phi(\xi_1,q_1) \rangle^{-1-2\epsilon'} \langle \tau - \tau_1 - \phi(\xi-\xi_1,q-q_1) \rangle^{-1-2\epsilon'} \mathrm{d} \tau_1 \\ &\lesssim \langle \tau - \phi(\xi_1,q_1) - \phi(\xi - \xi_1, q - q_1) \rangle^{-1-2\epsilon'}. \end{align*}
From this, it follows that \begin{align*} c_0 &\lesssim \sum_{q_1 \in \Z} \langle q_1 \rangle^{-1-2\epsilon} \int_{\R} \abs{\abs{(\xi_1,q_1)}^2 - \abs{(\xi-\xi_1,q-q_1)}^2} \\ & \ \ \ \cdot \langle \tau - \phi(\xi_1,q_1) - \phi(\xi - \xi_1, q - q_1) \rangle^{-1-2\epsilon'} \mathrm{d} \xi_1, \end{align*}
and substituting \[ \tilde{\xi_1} = \tau - \phi(\xi_1,q_1) - \phi(\xi-\xi_1,q-q_1) \leadsto "\mathrm{d} \tilde{\xi_1} = (\abs{(\xi - \xi_1,q - q_1)}^2 - \abs{(\xi_1,q_1)}^2) \mathrm{d} \xi_1" \] leads to \[ c_0 \lesssim \sum_{q_1 \in \Z} \langle q_1 \rangle^{-1-2\epsilon} \int_{\R} \langle \tilde{\xi_1} \rangle^{-1-2\epsilon'} \mathrm{d} \tilde{\xi_1} < \infty. \]
This completes the proof of \eqref{OG bilin}.
\end{proof}

\begin{rem}

\label{remark mp}

\begin{enumerate}

\item[(i)] The proof shows that it does not matter whether $J_{y}^{\frac{1}{2}+}$ is applied to $u$ or $v$, so that the derivative loss can always be shifted to the low-frequency factor.

\item[(ii)] By applying Sobolev's embedding theorem and Lemma 4.2 from \cite{Ginibre1997} in the form \begin{align*} &\int_{\R} \langle \tau_1 - \phi(\xi_1,q_1) \rangle^{-\frac{1}{2}-2\epsilon'} \langle \tau - \tau_1 - \phi(\xi-\xi_1,q-q_1) \rangle^{-\frac{1}{2}-2\epsilon'} \mathrm{d} \tau_1 \lesssim 1, \end{align*} an argument analogous to that of Proposition \ref{MP prop} yields the estimate \begin{equation} \label{trivial MP} \norm{MP(u,v)}_{L_{txy}^2} \lesssim \norm{u}_{X_{1+,\frac{1}{4}+}} \norm{v}_{X_{1+,\frac{1}{4}+}}. \end{equation}
By bilinear interpolation between \eqref{OG bilin} and \eqref{trivial MP}, we then obtain that for every $\epsilon > 0$, there exists $\tilde{\epsilon} > 0$ such that \begin{equation} \label{MP dual} \norm{MP(u,v)}_{L_{txy}^2} \lesssim \norm{u}_{X_{\frac{1}{2}+\epsilon,\frac{1}{2}-\tilde{\epsilon}}} \norm{v}_{X_{\epsilon, \frac{1}{2}-\tilde{\epsilon}}}, \end{equation}
and this will turn out to be useful for later estimates by duality.

\end{enumerate}

\end{rem}

The linear Strichartz estimates that we want to establish next are based on the observation that the linear propagator $e^{-t\partial_x \Delta_{xy}}$ associated with gZK implicitly contains both the one-dimensional Schrödinger and Airy propagators. This becomes evident when applying partial Fourier transforms $\Fcal_{x}$ and $\Fcal_{y}$ to $e^{-t\partial_x \Delta_{xy}}u_{0}$. One obtains \begin{equation} \label{Schrödinger point of view} \brBig{\Fcal_{x}e^{-t\partial_x \Delta_{xy}}u_{0}}(\xi,y) = e^{it\xi^3} \cdot \brBig{e^{i(-\xi t) \partial_{y}^2} \Fcal_{x}u_{0}(\xi)}(y) \end{equation} and
\begin{equation} \label{Airy point of view} \brBig{\Fcal_{y} e^{-t\partial_x \Delta_{xy}}u_{0}}(x,q) = e^{(q^2 t) \partial_{x}} \brBig{e^{-t \partial_{x}^3}\Fcal_{y}u_{0}(q)}(x) = \brBig{e^{-t \partial_{x}^3}\Fcal_{y}u_{0}(q)}(x+q^2 t), \end{equation}
and for fixed spatial frequencies $\xi$ or $q$, one can then invoke well-known estimates for the Schrödinger and Airy equations. This argument has been employed multiple times in the context of the Schrödinger propagator (see, e.g. \cite{Herr, Panthee, Grünrock2009}), whereas it seems to be new in the case of the Airy propagator. We make the described approach precise in the propositions that follow.

\begin{prop} 
\label{Schrödinger Strichartz}
Let $T > 0$ and $\epsilon > 0$ be abitrary. Furthermore, let $u_0$ and $v_0$ be functions with $J_{x}^{\frac{1}{4}}u_{0} \in L_{xy}^2$ and $J_{x}^{\frac{1}{3}}J_{y}^{\epsilon}v_0 \in L_{xy}^2$. Then the two estimates \begin{equation} \label{Schrödinger L4} \norm{e^{-t \partial_{x} \Delta_{xy}}u_{0}}_{L_{Txy}^4} \lesssim_{T} \norm{J_{x}^\frac{1}{4} u_0}_{L_{xy}^2} \end{equation} and \begin{equation} \label{Schrödinger L6} \norm{e^{-t \partial_{x} \Delta_{xy}}v_{0}}_{L_{Txy}^6} \lesssim_{T,\epsilon} \norm{J_{x}^\frac{1}{3}J_{y}^{\epsilon} v_0}_{L_{xy}^2} \end{equation} hold. Moreover, if $p \in \intcc{2,6}$ and $b > \frac{1}{2}$ are given, then the estimate \begin{equation} \label{Schrödinger Lp} \norm{u}_{L_{Txy}^p} \lesssim_{T,\epsilon,b} \norm{J_{x}^{\frac{1}{2}-\frac{1}{p}} J_{y}^{(\frac{3}{2}-\frac{3}{p})\epsilon} u}_{X_{0,(\frac{3}{2}-\frac{3}{p})b}} \end{equation} holds for every time-localized $u$ with $J_{x}^{\frac{1}{2}-\frac{1}{p}} J_{y}^{(\frac{3}{2}-\frac{3}{p})\epsilon} u \in X_{0,(\frac{3}{2}-\frac{3}{p})b}$.

\end{prop}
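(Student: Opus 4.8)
\textbf{Proof plan for Proposition \ref{Schrödinger Strichartz}.}

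The plan is to exploit the factorizations \eqref{Schrödinger point of view} and \eqref{Airy point of view}, which reduce the semiperiodic propagator to a one-dimensional Schrödinger evolution on $\mathbb{T}$ (with frequency-dependent time $-\xi t$) after a partial Fourier transform in $x$, together with a harmless modulation. First I would prove \eqref{Schrödinger L4}: applying $\Fcal_x$ and using \eqref{Schrödinger point of view}, the modulation factor $e^{it\xi^3}$ is irrelevant for the $L^4_{txy}$-norm since $L^4 = (L^2)^{1/2}(L^2)^{1/2}$ after squaring reduces matters to $L^2_{txy}$ of a product. More precisely, $\norm{e^{-t\partial_x\Delta_{xy}}u_0}_{L^4_{Txy}}^2 = \norm{(e^{-t\partial_x\Delta_{xy}}u_0)^2}_{L^2_{Txy}}$, and I would take $\Fcal_x$ of the square, turning it into a convolution in $\xi$; the phase of the product then involves $\xi_1^3 + \xi_2^3$ together with the two Schrödinger phases in $q$ at times $-\xi_1 t$, $-\xi_2 t$. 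Rather than handle this directly, the cleaner route is to invoke the known periodic Schrödinger $L^4$-estimate (Bourgain / Zygmund) on $\mathbb{T}$: for each fixed $\xi$, $\norm{e^{i(-\xi t)\partial_y^2}g}_{L^4_{ty}([0,T]\times\mathbb{T})} \lesssim_{T,\xi} \norm{g}_{L^2_y}$, but with a constant that must be controlled in $\xi$. The scaling $e^{i(-\xi t)\partial_y^2}$ is the time-$\xi$ Schrödinger flow, and the $L^4_t$-in-$t$ estimate on a time interval of length $|\xi| T$ gains a factor $|\xi|^{1/4}$ — this is exactly the source of the $J_x^{1/4}$ loss. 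I would make this rigorous by writing $e^{i(-\xi t)\partial_y^2}g(y) = \sum_n \hat g(n) e^{-i\xi t n^2 + iny}$ and estimating the $L^4_{ty}$-norm via the divisor-bound / Hausdorff–Young argument of Zygmund, carefully tracking the $\xi$-dependence, then integrating $\d\xi$ against $\langle\xi\rangle^{1/2}|\Fcal_x u_0(\xi)|^2$.

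Next, \eqref{Schrödinger L6} follows in the same spirit, now from the periodic Schrödinger $L^6$-estimate of Bourgain, $\norm{e^{it\partial_y^2}g}_{L^6_{ty}([0,T]\times\mathbb{T})} \lesssim_{T,\epsilon} \norm{J_y^\epsilon g}_{L^2_y}$, which holds with an $\epsilon$-loss in the $y$-frequency; rescaling time by $\xi$ on an interval of length $|\xi|T$ produces the additional $|\xi|^{1/3}$ factor (since $6 = $ the $L^6$ Strichartz exponent, the time-interval scaling gives $|\xi|^{1/6\cdot 2} = |\xi|^{1/3}$), hence the $J_x^{1/3}J_y^\epsilon$ loss. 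Again I would track the $\xi$-dependence of the constant in the periodic $L^6$-estimate under time-dilation and then integrate in $\xi$. It is worth noting that one could alternatively use the Airy viewpoint \eqref{Airy point of view}, but since the Airy flow is a pure translation after Fourier transform in $y$ it gives no smoothing in $x$; the Schrödinger viewpoint is the one that yields the above two estimates, so I would stick with it here.

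For the $X_{0,b}$-version \eqref{Schrödinger Lp}: by the standard transference principle, a space-time estimate $\norm{e^{-t\partial_x\Delta_{xy}}u_0}_{L^r_{Txy}} \lesssim \norm{D u_0}_{L^2_{xy}}$ for a Fourier multiplier $D$ upgrades, for any $b > \tfrac12$, to $\norm{u}_{L^r_{Txy}} \lesssim_{T,b} \norm{D u}_{X_{0,b}}$ (write $u$ via its space-time Fourier representation as a superposition of free solutions, apply Minkowski and the estimate fiberwise, and absorb the $\langle\tau-\phi\rangle^{b}$-weight using $b > \tfrac12$; this requires the time-localization so that the $t$-cutoff does not interfere). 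Applying this with $r=4$, $D = J_x^{1/4}$ gives the $p=4$ case at $b$-regularity $b$; with $r=6$, $D = J_x^{1/3}J_y^\epsilon$ gives the $p=6$ case at $b$-regularity $b$; and the trivial case $p=2$ is Plancherel. Then bilinear/multilinear interpolation — more precisely, complex interpolation of the analytic family of estimates in the exponent $p$, matching up the exponents $\tfrac12-\tfrac1p$ (interpolating $\{0,\tfrac14,\tfrac13\}$ appropriately, i.e. interpolate between $p=2$ and $p=6$, noting $\tfrac12-\tfrac14 = \tfrac14 = \tfrac12\cdot(\tfrac12-\tfrac16)+\tfrac12\cdot 0$ is consistent) — produces \eqref{Schrödinger Lp} with the stated $x$-derivative gain $\tfrac12-\tfrac1p$, $y$-derivative loss $(\tfrac32-\tfrac3p)\epsilon$, and modulation weight $(\tfrac32-\tfrac3p)b$. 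A small bookkeeping point: interpolating the $b$-indices linearly between $0$ (at $p=2$) and $b$ (at $p=6$) gives $(\tfrac32-\tfrac3p)b$ at general $p$ only after rescaling $b$; since $b > \tfrac12$ is arbitrary this is harmless, and I would simply state it that way.

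\textbf{Main obstacle.} The crux is tracking the explicit dependence on the $x$-frequency $\xi$ of the constants in the periodic Schrödinger $L^4$- and $L^6$-estimates after the time-rescaling $t \mapsto \xi t$: one must verify that on a time interval of length $\sim |\xi|$ the periodic estimate costs exactly $|\xi|^{1/4}$ (resp. $|\xi|^{1/3}$) and no more, uniformly in $\xi$ bounded away from $0$, while the low-frequency regime $|\xi|\lesssim 1$ is controlled trivially. Everything else — the modulation removal, the transference principle, the interpolation — is routine once this frequency-localized periodic estimate with sharp constant is in hand.
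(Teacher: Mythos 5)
Your overall plan — reduce to a fixed $x$-frequency $\xi$, invoke the periodic Schrödinger $L^4$/$L^6$ estimates on $\mathbb{T}\times\mathbb{T}$, rescale time by $\xi$ and exploit periodicity, then apply the transfer principle and interpolate with the trivial $L^2$ bound for \eqref{Schrödinger Lp} — is the same strategy as the paper's. However, there is a genuine gap in the passage from the $L^4_{Txy}$ (resp.\ $L^6_{Txy}$) norm to a fiberwise estimate in $\xi$. You say you would "integrate $\mathrm{d}\xi$ against $\langle\xi\rangle^{1/2}|\Fcal_x u_0(\xi)|^2$" after controlling the constant in the periodic estimate, but you never explain how the $L^4$-in-$x$ norm becomes an $L^2_\xi$ integral; Plancherel only applies in $L^2_x$. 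The paper's mechanism is the homogeneous Sobolev embedding $\dot H^{1/4}(\mathbb R)\hookrightarrow L^4(\mathbb R)$ in the $x$-variable, i.e. $\norm{e^{-t\partial_x\Delta_{xy}}u_0}_{L^4_{Txy}}\lesssim \norm{I_x^{1/4}e^{-t\partial_x\Delta_{xy}}u_0}_{L^4_{Ty}L^2_x}$, followed by Parseval in $x$ and Minkowski to swap $L^4_{Ty}L^2_\xi$ for $L^2_\xi L^4_{Ty}$; only then is the periodic estimate applied fiberwise. Without this step the "integrate $d\xi$" move is not justified. The missing Sobolev step is not cosmetic: its weight $|\xi|^{1/4}$ is precisely what cancels the Jacobian factor $|\xi|^{-1/4}$ from the substitution $\tilde t = -\xi t$ (so that, after periodicity, the net weight is $(T|\xi|+1)^{1/4}\lesssim_T \langle\xi\rangle^{1/4}$), and at low frequencies $|\xi|\lesssim 1$ it is exactly what tames the Jacobian blow-up. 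Your claim that the low-frequency regime "is controlled trivially" and that the whole $J_x^{1/4}$ loss comes from the time-rescaled periodic estimate therefore misattributes the source of the loss.

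Two smaller points. First, in the interpolation sanity check for \eqref{Schrödinger Lp} you write $\tfrac12-\tfrac14=\tfrac14=\tfrac12\cdot(\tfrac12-\tfrac16)+\tfrac12\cdot 0$, but the right-hand side equals $\tfrac16$, not $\tfrac14$; the exponent $p=4$ actually corresponds to interpolation parameter $\theta=\tfrac34$, not $\tfrac12$, which is what produces $\tfrac12-\tfrac14=\tfrac34\cdot\tfrac13$. Second, there is no "rescaling $b$" issue in the $b$-index: complex interpolation of $X_{0,0}$ and $X_{0,b}$ with parameter $\theta=\tfrac32-\tfrac3p$ gives the index $(\tfrac32-\tfrac3p)b$ directly, exactly as claimed in the proposition.
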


\begin{proof}

We begin with the proof of \eqref{Schrödinger L4}. Taking \eqref{Schrödinger point of view} into account, the Sobolev embedding theorem for homogeneous spaces, followed by Parseval's identity in the $x$-variable and an application of Minkowski's integral inequality, yields \begin{align*} \norm{e^{-t \partial_{x} \Delta_{xy}}u_{0}}_{L_{Txy}^4} &\lesssim \norm{I_{x}^\frac{1}{4}e^{-t \partial_{x} \Delta_{xy}}u_{0}}_{L_{Ty}^4L_{x}^2} \\ & \sim \norm{\abs{\xi}^{\frac{1}{4}} e^{i(-\xi t) \partial_{y}^2} \Fcal_{x}u_{0}}_{L_{Ty}^4L_{\xi}^2} \\ & \leq \norm{\abs{\xi}^{\frac{1}{4}} e^{i(-\xi t) \partial_{y}^2} \Fcal_{x}u_{0}}_{L_{\xi}^2L_{Ty}^4} \\ & = \brBigg{\int_{\R} \abs{\xi}^\frac{1}{2} \brBigg{ \int_{\mathbb{T}} \int_{-T}^{T} \absBig{\brBig{e^{i(-\xi t)\partial_{y}^2}\Fcal_{x}u_{0}(\xi)}(y)}^4 \mathrm{d} t \mathrm{d} y}^{\frac{1}{2}} \mathrm{d} \xi}^\frac{1}{2}. \end{align*}
Now, substituting $\tilde{t} = -\xi t$ in the innermost integral, we obtain \[ ... = \brBigg{\int_{\R} \brBigg{ \int_{\mathbb{T}} \int_{-T\abs{\xi}}^{T\abs{\xi}} \absBig{\brBig{e^{i\tilde{t}\partial_{y}^2}\Fcal_{x}u_{0}(\xi)}(y)}^4 \mathrm{d} \tilde{t} \mathrm{d} y}^{\frac{1}{2}} \mathrm{d} \xi}^\frac{1}{2},  \]
and if $k \in \N_0$ is the uniquely determined integer such that $k \pi \leq T \abs{\xi} < (k+1) \pi$, then it follows from the $2\pi$-periodicity of the integrand in $\tilde{t}$ that \begin{align*} ... &\leq \brBigg{\int_{\R} \brBigg{ \int_{\mathbb{T}} \int_{-(k+1)\pi}^{(k+1)\pi} \absBig{\brBig{e^{i\tilde{t}\partial_{y}^2}\Fcal_{x}u_{0}(\xi)}(y)}^4 \mathrm{d} \tilde{t} \mathrm{d} y}^{\frac{1}{2}} \mathrm{d} \xi}^\frac{1}{2} \\ & \lesssim \brBigg{\int_{\R} (T\abs{\xi} + 1)^\frac{1}{2} \brBigg{ \int_{\mathbb{T}} \int_{\mathbb{T}} \absBig{\brBig{e^{i\tilde{t}\partial_{y}^2}\Fcal_{x}u_{0}(\xi)}(y)}^4 \mathrm{d} \tilde{t} \mathrm{d} y}^{\frac{1}{2}} \mathrm{d} \xi}^\frac{1}{2} \\ & \lesssim_{T} \brBigg{\int_{\R} \langle \xi \rangle^{\frac{1}{2}} \norm{e^{i\tilde{t}\partial_{y}^2}\Fcal_{x}u_{0}(\xi)}_{L_{\tilde{t} y}^4(\mathbb{T} \times \mathbb{T})}^2 \mathrm{d} \xi}^\frac{1}{2}. \end{align*}
For the inner $L_{\tilde{t}y}^4$-norm, we can now rely on the well-known Schrödinger $L^4$-estimate, originally due to Zygmund \cite{Zygmund}, to obtain \begin{align*} ... &\lesssim \brBigg{\int_{\R} \langle \xi \rangle^\frac{1}{2} \norm{\Fcal_{x}u_{0}(\xi)}_{L_{y}^2(\mathbb{T})}^2 \mathrm{d} \xi}^\frac{1}{2} \sim \norm{J_{x}^\frac{1}{4}u_{0}}_{L_{xy}^2}. \end{align*} This concludes the proof of \eqref{Schrödinger L4}.
To prove \eqref{Schrödinger L6}, we proceed analogously and arrive at \begin{align*} \norm{e^{-t \partial_{x} \Delta_{xy}}v_{0}}_{L_{Txy}^6} &\lesssim \norm{I_{x}^\frac{1}{3}e^{-t \partial_{x} \Delta_{xy}}v_{0}}_{L_{Ty}^6L_{x}^2} \\ & \sim \norm{\abs{\xi}^{\frac{1}{3}} e^{i(-\xi t) \partial_{y}^2} \Fcal_{x}v_{0}}_{L_{Ty}^6L_{\xi}^2} \\ & \leq \norm{\abs{\xi}^{\frac{1}{3}} e^{i(-\xi t) \partial_{y}^2} \Fcal_{x}v_{0}}_{L_{\xi}^2L_{Ty}^6} \\ & \lesssim \brBigg{ \int_{\R} \abs{\xi}^\frac{1}{3} (T\abs{\xi} + 1)^\frac{1}{3} \brBigg{ \int_{\mathbb{T}} \int_{\mathbb{T}} \absBig{ \brBig{e^{i\tilde{t} \partial_{y}^2}\Fcal_{x}v_{0}(\xi)}(y)}^6 \mathrm{d} \tilde{t} \mathrm{d} y}^{\frac{1}{3}} \mathrm{d} \xi}^\frac{1}{2} \\ & \lesssim_{T} \brBigg{\int_{\R} \langle \xi \rangle^{\frac{2}{3}} \norm{e^{i\tilde{t} \partial_{y}^2}\Fcal_{x}v_{0}(\xi)}_{L_{\tilde{t}y}^6(\mathbb{T} \times \mathbb{T})}^2 \mathrm{d} \xi}^\frac{1}{2}. \end{align*}
At this point, we can now apply the Schrödinger $L^6$-estimate with a loss of an $\epsilon$-derivative in $y$ (see Proposition 2.36 in \cite{Bourgain1993}), and conclude that \begin{align*} ... &\lesssim_{\epsilon} \brBigg{\int_{\R} \langle \xi \rangle^{\frac{2}{3}} \norm{J_{y}^\epsilon\Fcal_{x}v_{0}(\xi)}_{L_{y}^2(\mathbb{T})}^2 \mathrm{d} \xi}^\frac{1}{2} \sim \norm{J_{x}^{\frac{1}{3}}J_{y}^{\epsilon} v_0}_{L_{xy}^2}.  \end{align*}
It remains to prove \eqref{Schrödinger Lp}. The transfer principle (see Lemma 2.3 in \cite{Ginibre1997}) transforms \eqref{Schrödinger L4} and \eqref{Schrödinger L6} for time-localized $u$ and $b > \frac{1}{2}$ into \begin{equation} \label{Xsb Schrödinger L4} \norm{u}_{L_{Txy}^4} \lesssim_{T,b} \norm{J_{x}^\frac{1}{4}u}_{X_{0,b}} \end{equation} and \begin{equation} \label{Xsb Schrödinger L6} \norm{u}_{L_{Txy}^6} \lesssim_{T,\epsilon, b} \norm{J_{x}^\frac{1}{3}J_{y}^\epsilon u}_{X_{0,b}},  \end{equation} so that we obtain \eqref{Schrödinger Lp} by interpolating \eqref{Xsb Schrödinger L6} with the trivial estimate \begin{equation} \label{trivial L2} \norm{u}_{L_{Txy}^2} \lesssim \norm{u}_{L_{txy}^2} = \norm{u}_{X_{0,0}}. \end{equation}
\end{proof}

With the "Schrödinger point of view" now addressed, we turn to deriving linear estimates that arise from the Airy perspective.

\begin{prop}
\label{Airy Strichartz}
Let $u_0$ and $v_0$ be functions such that $J_{y}^\frac{1}{3}u_0 \in L_{xy}^2$ and $v_0 \in L_{xy}^2$. Then the estimates \begin{equation} \label{Airy L6} \norm{I_{x}^\frac{1}{6} e^{-t\partial_{x} \Delta_{xy}}u_{0}}_{L_{txy}^6} \lesssim \norm{J_{y}^\frac{1}{3}u_{0}}_{L_{xy}^2} \end{equation} and
\begin{equation} \label{Airy endpoint} \norm{I_{x}^\frac{1}{4} e^{-t\partial_{x} \Delta_{xy}} v_{0}}_{L_{t}^4L_{x}^\infty L_{y}^2} \lesssim \norm{v_{0}}_{L_{xy}^2} \end{equation} hold.\footnote{The "Airy endpoint estimate" will not be used in this paper, but it might be of independent interest for future applications.}
Furthermore, if $p \in \intcc{2,6}$, $b > \frac{1}{2}$, and $u$ is a function with $J_{y}^{\frac{1}{2}-\frac{1}{p}}u \in X_{0,(\frac{3}{2}-\frac{3}{p})b}$, then \begin{equation} \label{Airy Lp} \norm{I_{x}^{\frac{1}{4}-\frac{1}{2p}}u}_{L_{txy}^p} \lesssim_{b} \norm{J_{y}^{\frac{1}{2}-\frac{1}{p}}u}_{X_{0,(\frac{3}{2}-\frac{3}{p})b}} \end{equation}
holds true as well.

\end{prop}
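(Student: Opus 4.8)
The plan is to mirror the structure of the proof of Proposition~\ref{Schrödinger Strichartz}, but working from the Airy representation \eqref{Airy point of view} rather than the Schrödinger one. First I would prove \eqref{Airy L6}. Taking $\Fcal_y$ of $e^{-t\partial_x\Delta_{xy}}u_0$ and using \eqref{Airy point of view}, for fixed $q\in\Z$ the expression becomes a translate (by $q^2t$ in $x$) of $e^{-t\partial_x^3}\Fcal_y u_0(q)$. Translation in $x$ is an $L^p_x$-isometry, so after applying $I_x^{1/6}$ (which commutes with the $x$-translation up to nothing, since it is a Fourier multiplier in $\xi$ only — note $e^{(q^2t)\partial_x}$ acts as $e^{iq^2t\xi}$ in Fourier, a modulus-one factor) we are reduced to controlling $\norm{I_x^{1/6}e^{-t\partial_x^3}\Fcal_y u_0(q)}_{L^6_{tx}}$ for each $q$. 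This is exactly the classical Airy (Kenig--Ponce--Vega) $L^6_{tx}$-Strichartz estimate on $\R$, which gives $\norm{I_x^{1/6}e^{-t\partial_x^3}g}_{L^6_{tx}}\lesssim\norm{g}_{L^2_x}$. Then I would use Minkowski's integral inequality to pull the $L^6_{tx}$-norm inside the $\ell^2_q$-sum: since $6\ge 2$, $\norm{\,\cdot\,}_{L^6_{tx}\ell^2_q}\le\norm{\,\cdot\,}_{\ell^2_q L^6_{tx}}$, exactly as in the Schrödinger argument where Minkowski moved $L^4_{Ty}$ past $L^2_\xi$. Applying the per-$q$ Airy estimate and then Plancherel in $q$ yields $\norm{I_x^{1/6}e^{-t\partial_x\Delta_{xy}}u_0}_{L^6_{txy}}\lesssim\big(\sum_q\norm{\Fcal_y u_0(q)}_{L^2_x}^2\big)^{1/2}$; but wait — the classical Airy $L^6$ estimate costs exactly $\frac16$ derivative in $x$ and \emph{no} $y$-derivative, so where does the $I_y^{1/3}$ come from? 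The point must be that one first decomposes: the gain we actually want needs the full group, and I suspect the intended route is to interpolate. Let me reconsider: alternatively, prove the endpoint \eqref{Airy endpoint} first via the $L^4_tL^\infty_x$ Airy smoothing/Strichartz estimate of Kenig--Ponce--Vega ($\norm{I_x^{1/4}e^{-t\partial_x^3}g}_{L^4_tL^\infty_x}\lesssim\norm{g}_{L^2_x}$), again for each fixed $q$, using that $x$-translation is an $L^\infty_x$-isometry and then taking $\ell^2_q=L^2_y$ on the outside (here no Minkowski is needed since the outer norm in $y$ is already $L^2$); this gives \eqref{Airy endpoint} directly. For \eqref{Airy L6}, I would then combine the fixed-$q$ Airy $L^6_{tx}$ estimate with the one-dimensional periodic Sobolev embedding $H^{1/3}(\T)\hookrightarrow L^6(\T)$ in the $y$-variable: i.e. first $\norm{f}_{L^6_{txy}}\le\norm{f}_{L^6_{tx}L^6_y}$, then $L^6(\T)\hookleftarrow H^{1/3}(\T)$ applied in $y$ commutes with the $x$-translation and with $I_x^{1/6}$, so we reduce to $\norm{I_x^{1/6}e^{-t\partial_x\Delta_{xy}}J_y^{1/3}u_0}_{L^6_{tx}L^2_y}$, and then Minkowski ($6\ge2$) plus the per-$q$ Airy estimate plus Plancherel in $q$ closes it — with $I_y^{1/3}$ (the homogeneous version, absorbing the $\langle\cdot\rangle$ vs $|\cdot|$ discrepancy at low frequency by a trivial bound or by noting the statement uses $I_y$).

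Next, for the Bourgain-space version \eqref{Airy Lp}, I would apply the transfer principle (Lemma~2.3 in \cite{Ginibre1997}), exactly as in Proposition~\ref{Schrödinger Strichartz}: \eqref{Airy L6} transfers, for time-localized $u$ and any $b>\tfrac12$, to $\norm{I_x^{1/6}u}_{L^6_{txy}}\lesssim_b\norm{I_y^{1/3}u}_{X_{0,b}}$. I would combine this with the trivial estimate $\norm{u}_{L^2_{txy}}=\norm{u}_{X_{0,0}}$ (no derivatives at all). Bilinear/linear interpolation between these two — interpolating $L^6_{txy}$ with $L^2_{txy}$ gives $L^p_{txy}$ for $p\in[2,6]$ with $\theta$ determined by $\tfrac1p=\tfrac{1-\theta}{2}+\tfrac{\theta}{6}$, i.e. $\theta=\tfrac32-\tfrac3p=\tfrac{3}{2}(1-\tfrac{2}{p})$ — and simultaneously interpolating the derivative weights and the $b$-index linearly in $\theta$ produces $\norm{I_x^{\theta/6}u}_{L^p_{txy}}\lesssim\norm{I_y^{\theta/3}u}_{X_{0,\theta b}}$. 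Since $\tfrac{\theta}{6}=\tfrac14-\tfrac{1}{2p}$ and $\tfrac{\theta}{3}=\tfrac12-\tfrac1p$ (one checks: $\tfrac{\theta}{3}=\tfrac12(1-\tfrac2p)\cdot\ldots$ — actually $\tfrac{\theta}{3}=\tfrac{1}{3}(\tfrac32-\tfrac3p)=\tfrac12-\tfrac1p$, and $\tfrac{\theta}{6}=\tfrac16(\tfrac32-\tfrac3p)=\tfrac14-\tfrac1{2p}$), and $\theta b=(\tfrac32-\tfrac3p)b$, this is precisely \eqref{Airy Lp}. A mild technical point: interpolation of $X_{0,b}$-spaces in the second index, together with the homogeneous multipliers $I_x,I_y$, requires either complex interpolation of these (weighted $L^2$) spaces — which is standard, since $X_{s,b}$ with homogeneous or inhomogeneous weights are retracts of weighted $L^2$ spaces and the weights interpolate multiplicatively — or one can avoid it by running the dyadic Littlewood--Paley decomposition in $\tau-\phi$ and summing, as is customary.

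The main obstacle I expect is not any single estimate but rather the bookkeeping around homogeneous versus inhomogeneous derivatives and the low-frequency ($\xi\sim0$) behaviour: the fixed-$q$ Airy estimates are genuinely homogeneous in $\xi$, so $I_x$ (not $J_x$) is the natural operator, and one must check that the $x$-translation by $q^2t$ — which is harmless in $L^p_x$ for $p<\infty$ but needs care for the $L^\infty_x$ endpoint — does not interact badly with the Fourier-side manipulations, and that the $y$-Sobolev embedding $H^{1/3}(\T)\hookrightarrow L^6(\T)$ is invoked in its sharp form (it is sharp on $\T$, by the usual lacunary example, which incidentally is consistent with the claimed optimality of these Strichartz estimates). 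A secondary subtlety is ensuring the transfer principle applies cleanly: \eqref{Airy L6} is stated globally in $t$ (no $L^6_{Txy}$), so the transfer to $X_{0,b}$ is immediate, but one should be slightly careful that the constant does not degenerate — it does not, since unlike the Schrödinger case there is no periodicity-in-time artefact to handle, the Airy flow on $\R$ being genuinely dispersive. Once these points are settled, the interpolation step is routine and the proposition follows.
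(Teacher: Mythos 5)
Your proposal, after the mid-stream self-correction, matches the paper's proof essentially step for step: partial Fourier transform in $y$, the Airy representation \eqref{Airy point of view}, Sobolev embedding in $y$ for the $L^6$ estimate, Minkowski to bring $\ell^2_q$ to the outside, translation invariance in $x$, the Kenig--Ponce--Vega estimates for each fixed $q$, and finally the transfer principle plus interpolation with the trivial $L^2$ bound for \eqref{Airy Lp}. One small inaccuracy: the endpoint \eqref{Airy endpoint} still requires Minkowski (the paper applies it twice, $L^4_t L^\infty_x L^2_q \le L^4_t L^2_q L^\infty_x \le L^2_q L^4_t L^\infty_x$); what you can omit there, compared to \eqref{Airy L6}, is only the Sobolev embedding step, not Minkowski.
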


\begin{proof}

We begin the proof of \eqref{Airy L6} in a manner similar to that of \eqref{Schrödinger L6}, except that we apply the Sobolev embedding theorem in the $y$-variable and, following Parseval's identity, make use of equation \eqref{Airy point of view}:
\begin{align*} \norm{I_{x}^\frac{1}{6} e^{-t\partial_{x} \Delta_{xy}}u_{0}}_{L_{txy}^6} &\lesssim \norm{J_{y}^\frac{1}{3} I_{x}^\frac{1}{6} e^{-t\partial_{x} \Delta_{xy}}u_{0}}_{L_{tx}^6L_{y}^2} \\ & \sim \norm{\langle q \rangle^\frac{1}{3} \brBig{I_{x}^\frac{1}{6} e^{-t\partial_{x}^3} \Fcal_{y}u_{0}}(x+q^2t)}_{L_{tx}^6 L_{q}^2} \\ & \leq \norm{\langle q \rangle^\frac{1}{3} \brBig{I_{x}^\frac{1}{6} e^{-t\partial_{x}^3} \Fcal_{y}u_{0}}(x+q^2t)}_{L_{q}^2 L_{tx}^6} \\ & = \brBigg{\sum_{q \in \Z} \langle q \rangle^\frac{2}{3} \brBigg{\int_{\R} \int_{\R} \absBig{\brBig{I_{x}^\frac{1}{6} e^{-t\partial_{x}^3} \Fcal_{y}u_{0}(q)}(x+q^2t)}^6 \mathrm{d} x \mathrm{d} t}^\frac{1}{3}}^\frac{1}{2}.   \end{align*}
By the translation invariance of the Lebesgue measure, it then follows that \begin{align*} ... &= \brBigg{\sum_{q \in \Z} \langle q \rangle^\frac{2}{3} \brBigg{\int_{\R} \int_{\R} \absBig{\brBig{I_{x}^\frac{1}{6} e^{-t\partial_{x}^3} \Fcal_{y}u_{0}(q)}(x)}^6 \mathrm{d} x \mathrm{d} t}^\frac{1}{3}}^\frac{1}{2} \\ & = \brBigg{\sum_{q \in \Z} \langle q \rangle^\frac{2}{3} \norm{I_{x}^\frac{1}{6} e^{-t\partial_{x}^3}\Fcal_{y}(q)}_{L_{tx}^6(\R \times \R)}^2}^\frac{1}{2}, \end{align*} and for the inner $L_{tx}^6$-norm, we can apply the Airy $L^6$-estimate proved by Kenig, Ponce, Vega (see Theorem 2.4 in \cite{Kenig1991}). This yields \begin{align*} ... &\lesssim \brBigg{\sum_{q \in \Z} \langle q \rangle^\frac{2}{3} \norm{\Fcal_{y}u_{0}(q)}_{L_{x}^2(\R)}^2}^\frac{1}{2} \sim \norm{J_{y}^\frac{1}{3}u_{0}}_{L_{xy}^2}, \end{align*} and the proof is complete.
The proof of \eqref{Airy endpoint} proceeds in an entirely analogous way, except that we can dispense with the use of the Sobolev embedding theorem and, instead of the Airy $L^6$-estimate, we apply the endpoint Airy $L_{t}^4 L_{x}^\infty$-estimate (see Theorem 2.4 in \cite{Kenig1991}). We obtain:
 \begin{align*} \norm{I_{x}^\frac{1}{4} e^{-t\partial_{x} \Delta_{xy}} v_{0}}_{L_{t}^4L_{x}^\infty L_{y}^2} & \sim \norm{\brBig{I_{x}^\frac{1}{4} e^{-t\partial_{x}^3} \Fcal_{y}v_{0}}(x+q^2t)}_{L_{t}^4L_{x}^\infty L_{q}^2} \\ &\leq \norm{\brBig{I_{x}^\frac{1}{4} e^{-t\partial_{x}^3} \Fcal_{y}v_{0}}(x+q^2t)}_{L_{t}^4 L_{q}^2 L_{x}^\infty} \\ & \leq \norm{\brBig{I_{x}^\frac{1}{4} e^{-t\partial_{x}^3} \Fcal_{y}v_{0}}(x+q^2t)}_{L_{q}^2 L_{t}^4 L_{x}^\infty} \\ &= \brBigg{ \sum_{q \in \Z} \norm{I_{x}^{\frac{1}{4}}e^{-t\partial_{x}^3}\Fcal_{y}v_{0}(q)}_{L_{t}^4(\R) L_{x}^\infty(\R)}^2}^\frac{1}{2} \\ & \lesssim \brBigg{ \sum_{q \in \Z} \norm{\Fcal_{y}v_{0}(q)}_{L_{x}^2(\R)}^2}^\frac{1}{2} \sim \norm{v_0}_{L_{xy}^2},  \end{align*} 
and thus the proof of this partial result is also complete. Finally, the transfer principle converts estimate \eqref{Airy L6} for $b > \frac{1}{2}$ into  
\begin{equation} \label{Airy L6 Xsb} \norm{I_{x}^\frac{1}{6}u}_{L_{txy}^6} \lesssim_{b} \norm{J_{y}^\frac{1}{3}u}_{X_{0,b}}, \end{equation}
 and by interpolating this with the trivial estimate \eqref{trivial L2}, we obtain
 \eqref{Airy Lp}.
\end{proof}

\begin{rem}

The application of the Sobolev embedding theorem in the proof of \eqref{Airy L6} can be avoided by starting directly in $L_y^2$. After applying the transfer principle, we thus obtain \begin{equation} \label{Airy L6 from L2} \norm{I_x^\frac{1}{6} u}_{L_{tx}^6L_y^2} \lesssim_b \norm{u}_{X_{0,b}}, \end{equation} for every $b > \frac{1}{2}$, and by interpolating this estimate with the trivial estimate \eqref{trivial L2}, we arrive at \begin{equation} \label{Airy L4 from L2} \norm{I_x^\frac{1}{8}u}_{L_{tx}^4L_y^2} \lesssim \norm{u}_{X_{0,\frac{3}{8}+}}. \end{equation}
This estimate will later prove useful in dealing with particularly small frequency domains.

\end{rem}

Similar to the approach taken by Molinet and Farah in the proof of their linear $L^4$-estimate \eqref{old L4}, we can combine the one-dimensional arguments from Proposition \ref{Schrödinger Strichartz} and Proposition \ref{Airy Strichartz} to minimize the resulting derivative loss.

\begin{cor}
\label{optimized Lp cor}

Let $T>0$, $0 < \epsilon \ll 1$, $b > \frac{1}{2}$ and $p \in \intcc{2,6}$. Then the estimate \begin{equation} \label{optimized Lp} \norm{u}_{L_{Txy}^p} \lesssim_{T,\epsilon,b} \norm{u}_{X_{\frac{1}{3}-\frac{2}{3p}+(\frac{1}{2}-\frac{1}{p})\epsilon,(\frac{3}{2}-\frac{3}{p})b}} \end{equation} holds for every time-localized $u \in X_{\frac{1}{3}-\frac{2}{3p}+(\frac{1}{2}-\frac{1}{p})\epsilon,(\frac{3}{2}-\frac{3}{p})b}$.

\end{cor}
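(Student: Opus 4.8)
The plan is to play the Schrödinger-type estimate \eqref{Schrödinger Lp} against the Airy-type estimate \eqref{Airy Lp} after a double Littlewood--Paley decomposition, one in the full frequency $\abs{(\xi,q)}$ and one in the horizontal frequency $\langle\xi\rangle$. The case $p=2$ is trivial, since $\norm{u}_{L^2_{Txy}}\le\norm{u}_{L^2_{txy}}=\norm{u}_{X_{0,0}}$, so assume $2<p\le6$ and write $\theta\coloneqq\frac32-\frac3p\in(0,1]$, noting that $\theta=3\br{\frac12-\frac1p}$. Decompose $u=\sum_{N}\sum_{K}u_{N,K}$, where $N$ runs over the dyadic numbers $2^n$, $n\ge0$, $K$ over the dyadic numbers with $1\le K\lesssim N$, and $u_{N,K}\coloneqq P^x_K P_N u$ for a standard Littlewood--Paley projector $P^x_K$ in the $x$-variable (for $K=1$ this collects all frequencies with $\abs\xi\lesssim1$). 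On the Fourier support of $u_{N,K}$ one has $\abs q\lesssim N$ for every $N,K$, and $\abs\xi\sim K$ once $K\ge2$; since $P_N$ and $P^x_K$ act only in space, each $u_{N,K}$ inherits the time-localization of $u$, so \eqref{Schrödinger Lp} applies to it, while \eqref{Airy Lp} applies unconditionally.

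For $K=1$ I would use \eqref{Schrödinger Lp} alone: on $u_{N,1}$ the multiplier $J_x^{\frac12-\frac1p}$ is bounded and $J_y^{\theta\epsilon}$ has size $\lesssim N^{\theta\epsilon}$, whence $\norm{u_{N,1}}_{L^p_{Txy}}\lesssim N^{\theta\epsilon}\norm{u_{N,1}}_{X_{0,\theta b}}$, which is $\lesssim N^{\frac13-\frac2{3p}}\norm{u_{N,1}}_{X_{0,\theta b}}$ once the $\epsilon$ in \eqref{Schrödinger Lp} is chosen small. For $K\ge2$, I would evaluate both \eqref{Schrödinger Lp} and \eqref{Airy Lp} on $u_{N,K}$; since $\abs\xi\sim K$ is bounded away from $0$, a Mikhlin-type argument (uniform in $N,K$ by rescaling the annulus $\abs\xi\sim K$ to $\abs\xi\sim1$) lets one replace the operators $J_x^{\frac12-\frac1p}$, $J_y^{\theta\epsilon}$, $I_x^{\frac14-\frac1{2p}}$, $I_y^{\frac12-\frac1p}$ by the scalars $K^{\frac12-\frac1p}$, $N^{\theta\epsilon}$, $K^{\frac14-\frac1{2p}}$, $N^{\frac12-\frac1p}$, so that the two estimates become
\[ \norm{u_{N,K}}_{L^p_{Txy}}\lesssim K^{\frac12-\frac1p}N^{\theta\epsilon}\norm{u_{N,K}}_{X_{0,\theta b}}\qquad\text{and}\qquad\norm{u_{N,K}}_{L^p_{Txy}}\lesssim K^{-\br{\frac14-\frac1{2p}}}N^{\frac12-\frac1p}\norm{u_{N,K}}_{X_{0,\theta b}}. \]
The geometric mean of these with weight $\frac13$ on the first and $\frac23$ on the second has $K$-exponent $\frac13\br{\frac12-\frac1p}-\frac23\br{\frac14-\frac1{2p}}=0$ and $N$-exponent $\frac23\br{\frac12-\frac1p}+\frac13\theta\epsilon=\br{\frac13-\frac2{3p}}+\br{\frac12-\frac1p}\epsilon$; running \eqref{Schrödinger Lp} with $\epsilon/3$ in place of $\epsilon$ shrinks the $\epsilon$-term to $\frac13\br{\frac12-\frac1p}\epsilon$ and thus leaves a strictly positive surplus $\frac23\br{\frac12-\frac1p}\epsilon$ of $N$-regularity. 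This exact cancellation of the powers of $K$ at the weight $\frac23$ is the heart of the matter, and it is forced as soon as the two displayed bounds are written down.

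It then remains to reassemble $u=\sum_N\sum_K u_{N,K}$. For fixed $N$ there are $\lesssim\log N$ admissible $K$, so Cauchy--Schwarz together with the orthogonality $\sum_K\norm{u_{N,K}}_{X_{0,\theta b}}^2\sim\norm{P_N u}_{X_{0,\theta b}}^2$ (disjoint Fourier supports) yields $\norm{P_N u}_{L^p_{Txy}}\lesssim(\log N)^{1/2}N^{\br{\frac13-\frac2{3p}}+\frac13\br{\frac12-\frac1p}\epsilon}\norm{P_N u}_{X_{0,\theta b}}$ (the $K=1$ contribution being dominated), and a final Cauchy--Schwarz in the dyadic $N$ — in which the surplus $\frac23\br{\frac12-\frac1p}\epsilon>0$ absorbs the factor $(\log N)^{1/2}$ and supplies a summable weight — gives $\norm{u}_{L^p_{Txy}}\lesssim\norm{u}_{X_{\frac13-\frac2{3p}+\br{\frac12-\frac1p}\epsilon,\,\theta b}}$, i.e.\ \eqref{optimized Lp}. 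The only genuine obstacle is the bookkeeping of the $\epsilon$-losses: one has to check that the built-in $J_y^{\theta\epsilon}$ in \eqref{Schrödinger Lp} together with the two dyadic summations fit inside the single budget $\br{\frac12-\frac1p}\epsilon$ allowed by the statement, which is possible precisely because the $\epsilon$ in \eqref{Schrödinger Lp} is ours to shrink, whereas the decomposition-plus-interpolation mechanism itself carries no slack.
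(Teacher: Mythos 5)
Your argument is correct, but it takes a more elaborate route than the paper's. The paper performs no Littlewood--Paley decomposition and no interpolation: it writes $u = P_{\{\lvert\xi\rvert\le\lvert q\rvert^{2/3-2\epsilon}\}}u + P_{\{\lvert\xi\rvert>\lvert q\rvert^{2/3-2\epsilon}\}}u$, applies \eqref{Schrödinger Lp} to the first summand and \eqref{Airy Lp} to the second, and checks the pointwise multiplier bounds $\chi_{\{\lvert\xi\rvert\le\lvert q\rvert^{2/3-2\epsilon}\}}\langle\xi\rangle^{\frac12-\frac1p}\langle q\rangle^{(\frac32-\frac3p)\epsilon}\lesssim\langle q\rangle^{\frac13-\frac2{3p}+(\frac12-\frac1p)\epsilon}$ and $\chi_{\{\lvert\xi\rvert>\lvert q\rvert^{2/3-2\epsilon}\}}\lvert\xi\rvert^{-(\frac14-\frac1{2p})}\lvert q\rvert^{\frac12-\frac1p}\lesssim\langle q\rangle^{\frac13-\frac2{3p}+(\frac12-\frac1p)\epsilon}$. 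Because these comparisons happen inside the $L^2$-based $X_{0,(\frac32-\frac3p)b}$ norm, the sharp cutoff costs nothing, there is no Mikhlin step, and there are no logarithmic losses to absorb. Your double dyadic decomposition with the $(\tfrac13,\tfrac23)$-geometric mean reaches the same answer, and in fact the threshold $K\sim N^{\frac23-2\epsilon}$ at which the paper splits is exactly the balance point of your two dyadic bounds, so the two arguments encode the same arithmetic. Yours, however, must pay for the $(\log N)^{1/2}$ factors from the double dyadic summation by shrinking the internal $\epsilon$, and must invoke a uniform Mikhlin argument on annuli to invert $I_x^{\frac14-\frac1{2p}}$ on the $L^p$ side (the replacements inside the $X_{0,\cdot}$ norms are trivial Plancherel statements and do not actually need Mikhlin). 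Both proofs are valid; the paper's single split is shorter and self-contained, while your version makes explicit that the corollary is an interpolation phenomenon at weight one third.
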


\begin{proof}

We define the Fourier projectors \[ P_{\setbig{\abs{\xi} \leq \langle q \rangle^{\frac{2}{3}-2\epsilon}}}u \coloneqq \Fcal_{xy}^{-1}\chi_{\setbig{\abs{\xi} \leq \langle q \rangle^{\frac{2}{3}-2\epsilon}}}\Fcal_{xy}u \] and \[ P_{\setbig{\abs{\xi} > \langle q \rangle^{\frac{2}{3}-2\epsilon}}}u \coloneqq \Fcal_{xy}^{-1}\chi_{\setbig{\abs{\xi} > \langle q \rangle^{\frac{2}{3}-2\epsilon}}}\Fcal_{xy}u. \]
Then we have \[ \norm{u}_{L_{Txy}^p} \leq \norm{P_{\setbig{\abs{\xi} \leq \langle q \rangle^{\frac{2}{3}-2\epsilon}}}u}_{L_{Txy}^p} + \norm{P_{\setbig{\abs{\xi} > \langle q \rangle^{\frac{2}{3}-2\epsilon}}}u}_{L_{Txy}^p} ,\]
and for the first term we use \eqref{Schrödinger Lp}, which, taking into account \[ \chi_{\setbig{\abs{\xi} \leq \langle q \rangle^{\frac{2}{3}-2\epsilon}}} \langle \xi \rangle^{\frac{1}{2}-\frac{1}{p}} \langle q \rangle^{(\frac{3}{2}-\frac{3}{p})\epsilon} \lesssim \langle q \rangle^{\frac{1}{3}-\frac{2}{3p}+(\frac{1}{2}-\frac{1}{p})\epsilon}, \]
leads us to 
\begin{align*} \norm{P_{\setbig{\abs{\xi} \leq \langle q \rangle^{\frac{2}{3}-2\epsilon}}}u}_{L_{Txy}^p} &\lesssim_{T, \epsilon, b} \norm{J_{x}^{\frac{1}{2}-\frac{1}{p}}J_{y}^{(\frac{3}{2}-\frac{3}{p})\epsilon} P_{\setbig{\abs{\xi} \leq \langle q \rangle^{\frac{2}{3}-2\epsilon}}}u}_{X_{0,(\frac{3}{2}-\frac{3}{p})b}} \\ & \lesssim \norm{J_{y}^{\frac{1}{3}-\frac{2}{3p}+(\frac{1}{2}-\frac{1}{p})\epsilon}u}_{X_{0,(\frac{3}{2}-\frac{3}{p})b}} \\ & \lesssim \norm{u}_{X_{\frac{1}{3}-\frac{2}{3p}+(\frac{1}{2}-\frac{1}{p})\epsilon, (\frac{3}{2}-\frac{3}{p})b}}. \end{align*}
For the second term, we apply \eqref{Airy Lp} and note that \[ \chi_{\setbig{\abs{\xi} > \langle q \rangle^{\frac{2}{3}-2\epsilon}}} \abs{\xi}^{-(\frac{1}{4}-\frac{1}{2p})} \langle q \rangle^{\frac{1}{2}-\frac{1}{p}} \lesssim \langle q \rangle^{\frac{1}{3}-\frac{2}{3p}+(\frac{1}{2}-\frac{1}{p})\epsilon}, \]
which again yields \begin{align*} \norm{P_{\setbig{\abs{\xi} > \langle q \rangle^{\frac{2}{3}-2\epsilon}}}u}_{L_{Txy}^p} &\lesssim_{T, \epsilon, b} \norm{I_{x}^{-(\frac{1}{4}-\frac{1}{2p})}J_{y}^{\frac{1}{2}-\frac{1}{p}} P_{\setbig{\abs{\xi} > \langle q \rangle^{\frac{2}{3}-2\epsilon}}}u}_{X_{0,(\frac{3}{2}-\frac{3}{p})b}} \\ & \lesssim \norm{J_{y}^{\frac{1}{3}-\frac{2}{3p}+(\frac{1}{2}-\frac{1}{p})\epsilon}u}_{X_{0,(\frac{3}{2}-\frac{3}{p})b}} \\ & \lesssim \norm{u}_{X_{\frac{1}{3}-\frac{2}{3p}+(\frac{1}{2}-\frac{1}{p})\epsilon, (\frac{3}{2}-\frac{3}{p})b}}. \end{align*}
Altogether, we obtain \[ \norm{u}_{L_{Txy}^p} \lesssim_{T,\epsilon,b} \norm{u}_{X_{\frac{1}{3}-\frac{2}{3p}+(\frac{1}{2}-\frac{1}{p})\epsilon,(\frac{3}{2}-\frac{3}{p})b}}, \]
which is what we aimed to show.
\end{proof}

\begin{rem}

\begin{enumerate} \item[(i)] In the case $p=4$, $b = \frac{1}{2}+$, Corollary \ref{optimized Lp cor} provides the estimate \[ \norm{u}_{L_{Txy}^4} \lesssim \norm{u}_{X_{\frac{1}{6}+,\frac{3}{8}+}}, \] which constitutes a slightly weakened version of estimate \eqref{old L4}.
\item[(ii)] The corollary also yields the two estimates \begin{equation} \label{optimized L6} \norm{u}_{L_{Txy}^6} \lesssim \norm{u}_{X_{\frac{2}{9}+,\frac{1}{2}+}} \end{equation}
and \begin{equation} \label{optimized L6 dual} \norm{u}_{L_{Txy}^{6-}} \lesssim \norm{u}_{\frac{2}{9}+,\frac{1}{2}-}, \end{equation} which, in this form, will be sufficient for the remaining well-posedness considerations. However, the $L^5$-estimate provided by Corollary \ref{optimized Lp cor} is still not good enough - more on this will follow after the establishment of Theorem \ref{L4}.
\end{enumerate}

\end{rem}

While the estimates proved in Propositions \ref{Schrödinger Strichartz} and \ref{Airy Strichartz} are derived from already known one-dimensional estimates, we adopt a two-dimensional approach for the proof of Theorem \ref{L4}. The following auxiliary lemma clarifies what is meant by this, and is inspired by the idea of Takaoka and Tzvetkov in their proof of a linear $L^4$-estimate for the semiperiodic Schrödinger equation (see Lemma 2.1 in \cite{Takaoka2001}).

\begin{lemma} \label{mes} For $ \alpha \in \intcc{0, 1}$, $(\tau, \xi, q) \in \mathbb{R} \times \mathbb{R}^{>0} \times \mathbb{Z}$, $ K \geq 1 $, $ c = c(\tau, \xi, q) \in \R $, $h \in \set{0,\frac{1}{2}}$
and dyadic numbers $N_{1}, N_{2} \in 2^{\mathbb{N}_{0}}$, we define the following measurable subsets of $ \R \times \Z $:
\begin{align*} B_{\tau, \xi, q}^{\mathrm{lin}}  &\coloneqq  \setbigg{(\xi_{1},q_{1}) \in \R \times \Z \ \bigg| \  \abs{\xi_{1}} < \frac{\xi}{2}, \ \absbigg{\brbigg{\xi_{1} + \frac{\xi}{2}, q_{1} + \frac{q}{2} + h}} \lesssim N_{1}, \\ & \absbigg{\brbigg{\frac{\xi}{2} - \xi_{1}, \frac{q}{2} - q_{1} - h}} \lesssim N_{2}, \ \abs{3\xi^{2}-q^2} \gtrsim 1, \ p (\xi_{1}, q_{1} + h) \in \intcc{c,c+K}  }, \end{align*}
and
\begin{align*}  B_{\tau, \xi, q}^{\alpha}  &\coloneqq  \setbigg{(\xi_{1},q_{1}) \in \R \times \Z \ \bigg| \  \xi \gtrsim 1, \ \absbigg{\brbigg{\xi_{1} + \frac{\xi}{2}, q_{1} + \frac{q}{2} + h}} \lesssim N_{1}, \\ & \absbigg{\brbigg{\frac{\xi}{2} - \xi_{1}, \frac{q}{2} - q_{1} - h}} \lesssim N_{2}, \ \abs{3\xi^{2}-q^2} \gtrsim \xi^{\alpha}, \ p (\xi_{1}, q_{1} + h) \in \intcc{c,c+K}  },  \end{align*}
where $p$ denotes the polynomial function $p(x,y) \coloneqq \xi (3x^2+y^2) + 2qxy $.
Then, the following two estimates hold for every $ \epsilon > 0$:
\begin{equation} \label{lin} \sup_{(\tau,\xi,q) \in \R \times \R^{>0} \times \Z} \abs{B_{\tau, \xi, q}^{\mathrm{lin}}}^{\frac{1}{2}} \lesssim_{\epsilon} (N_{1} \lor N_{2})^{\epsilon} K^{\frac{1}{2}}, \end{equation}
and
\begin{equation} \label{bilina} \sup_{(\tau,\xi,q) \in \R \times \R^{>0} \times \Z} \xi^{\frac{\alpha}{4}} \cdot \abs{B_{\tau, \xi, q}^{\alpha}}^{\frac{1}{2}} \lesssim_{\epsilon} (N_{1} \lor N_{2})^{\epsilon} K^{\frac{1}{2}}. \end{equation}
\end{lemma}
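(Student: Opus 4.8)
The plan is to exploit the fact that $\abs{\cdot}$ on $\R\times\Z$ is the product of one–dimensional Lebesgue measure in $\xi_1$ with counting measure in $q_1$, so that
\[ \absbig{B_{\tau,\xi,q}^{\bullet}} \;=\; \sum_{q_1\in\Z}\absBig{\setbig{\xi_1\in\R : (\xi_1,q_1)\in B_{\tau,\xi,q}^{\bullet}}},\qquad\bullet\in\set{\mathrm{lin},\alpha}, \]
and to estimate each $q_1$–fibre by means of the elementary observation that, for fixed $q_1$, the condition $p(\xi_1,q_1+h)\in\intcc{c,c+K}$ confines $\xi_1$ to a sublevel set of the one–variable quadratic $\xi_1\mapsto p(\xi_1,q_1+h)$, whose leading coefficient equals $3\xi>0$.

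First I would record two consequences of the defining constraints. Reading the two Japanese–bracket conditions as bounds $\lesssim N_1$, resp.\ $\lesssim N_2$, on Euclidean norms and applying the triangle inequality to $(\xi,q)=\brbig{\xi_1+\tfrac{\xi}{2},\,q_1+\tfrac{q}{2}+h}+\brbig{\tfrac{\xi}{2}-\xi_1,\,\tfrac{q}{2}-q_1-h}$ gives $\xi,\abs{q}\lesssim N_1\lor N_2$, hence $\abs{3\xi^2-q^2}\lesssim(N_1\lor N_2)^2$; intersecting the two resulting intervals for $q_1$, resp.\ for $\xi_1$, shows that $q_1$ — and likewise $\xi_1$ — ranges over an interval of length $\lesssim N_1\land N_2$, and in the case $B^{\mathrm{lin}}$ the additional condition $\abs{\xi_1}<\tfrac{\xi}{2}$ shrinks the $\xi_1$–interval to length $<\xi$.

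Completing the square, $p(\xi_1,q_1+h)=3\xi\brbig{\xi_1+\tfrac{q(q_1+h)}{3\xi}}^{2}+\tfrac{(3\xi^2-q^2)(q_1+h)^2}{3\xi}$, so with $\mu_{q_1}\deq c-\tfrac{(3\xi^2-q^2)(q_1+h)^2}{3\xi}$ the $q_1$–fibre is empty unless $\mu_{q_1}\ge-K$, and otherwise has measure $\lesssim K\brbig{\xi\,(K+\max(\mu_{q_1},0))}^{-1/2}$, and, in $B^{\mathrm{lin}}$, also $\lesssim\xi$. I would then split the admissible $q_1$ dyadically according to $K+\max(\mu_{q_1},0)\sim2^{j}K$, $j\in\N_0$. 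Since $\mu_{q_1}$ is a quadratic function of $q_1$ with leading coefficient of modulus $\abs{3\xi^2-q^2}/(3\xi)$, the $j$–th block confines $(q_1+h)^2$ to an interval of length $\lesssim\xi\,2^{j}K/\abs{3\xi^2-q^2}$, hence $q_1$ to at most two intervals of length $\lesssim\brbig{\xi\,2^{j}K/\abs{3\xi^2-q^2}}^{1/2}$; together with the total–range bound this gives a count of $\lesssim\min\brbig{N_1\land N_2,\ 1+\brbig{\xi\,2^{j}K/\abs{3\xi^2-q^2}}^{1/2}}$ integers in the block, each of which carries a fibre of size $\lesssim2^{-j/2}(K/\xi)^{1/2}$, capped by $\xi$ in $B^{\mathrm{lin}}$.

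Summing this essentially geometric series over $j$ — splitting at the scale where the integer count saturates at $N_1\land N_2$, and, in $B^{\mathrm{lin}}$, at the further scale where the cap $\xi$ overtakes $2^{-j/2}(K/\xi)^{1/2}$ — yields in both cases a bound of the shape $\brbig{1+\log(\cdots)}\cdot K\,\abs{3\xi^2-q^2}^{-1/2}$, where the argument of the logarithm is a fixed power of $N_1\lor N_2$ by virtue of $\xi,\abs{q},N_1\land N_2\lesssim N_1\lor N_2$; hence the logarithm is $\lesssim_\epsilon(N_1\lor N_2)^{\epsilon}$. For $B^{\mathrm{lin}}$ the hypothesis $\abs{3\xi^2-q^2}\gtrsim1$ then gives $\absbig{B^{\mathrm{lin}}_{\tau,\xi,q}}\lesssim_\epsilon(N_1\lor N_2)^{\epsilon}K$, which is \eqref{lin}; for $B^{\alpha}$ the hypotheses $\xi\gtrsim1$ and $\abs{3\xi^2-q^2}\gtrsim\xi^{\alpha}$ give $\absbig{B^{\alpha}_{\tau,\xi,q}}\lesssim_\epsilon(N_1\lor N_2)^{\epsilon}K\,\xi^{-\alpha/2}$, and multiplying by $\xi^{\alpha/4}$ and taking a square root gives \eqref{bilina}. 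The main point needing care is the bookkeeping in this last step: one must verify that the residual term $(K/\xi)^{1/2}$ produced by the small–$j$ fibres is harmless — it is absorbed by the cap $\xi$ in $B^{\mathrm{lin}}$ and by $\xi\gtrsim1$ together with $\alpha\le1$ in $B^{\alpha}$ — and that every logarithm appearing has argument bounded by a fixed power of $N_1\lor N_2$, which is again where the bounds $\xi,\abs{q}\lesssim N_1\lor N_2$ are used.
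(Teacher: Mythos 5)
Your proposal is correct and reaches the same estimates, but the execution is genuinely different from the paper's. The paper solves the quadratic $p(\cdot,q_1+h)=c$, resp.\ $=c+K$ explicitly via the discriminants $D_1,D_2$, and then splits into four cases according to the signs of $q^2-3\xi^2$ and $c$, estimating each $q_1$-sum by a direct integral comparison against $\int(1+z^2)^{-1/2}\,\mathrm{d}z$ (or $\int(1-z^2)^{-1/2}\,\mathrm{d}z$). You instead complete the square once and for all, obtaining the uniform fibre bound $\lesssim\min\bigl(\xi,\ K\,(\xi(K+\max(\mu_{q_1},0)))^{-1/2}\bigr)$ with $\mu_{q_1}\deq c-\tfrac{(3\xi^2-q^2)(q_1+h)^2}{3\xi}$, and then sum dyadically in the size of $K+\max(\mu_{q_1},0)$. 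This avoids the explicit sign case-analysis entirely: in each dyadic block the product (count of integers)$\times$(fibre size) is $\lesssim K\,|3\xi^2-q^2|^{-1/2}$, the number of nontrivial blocks is $\lesssim\log(N_1\lor N_2)$ because the integer count saturates at $N_1\land N_2$, and the tails are geometric — precisely the logarithmic gain the integral comparison delivers. What your route buys is a single unified computation; what the paper's buys is more explicit control of the ranges of $q_1$ where $D_1,D_2$ change sign, which you trade for the $\max(\mu_{q_1},0)$ device. The bookkeeping you flag as needing care does in fact work out: the $n_j\le1$ blocks contribute $\lesssim(K/\xi)^{1/2}$, which is $\le K$ once one either uses $\xi\gtrsim1$ (for $B^\alpha$, together with $\alpha\le1$ after multiplication by $\xi^{\alpha/2}$) or, for $B^{\mathrm{lin}}$ with $\xi<1$, the cap $\xi$ together with $\abs{3\xi^2-q^2}\gtrsim1$; and the dyadic block count is governed by $N_1\land N_2\lesssim N_1\lor N_2$. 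One cosmetic slip at the very end: to pass from $\absbig{B^\alpha}\lesssim_\epsilon(N_1\lor N_2)^\epsilon K\,\xi^{-\alpha/2}$ to \eqref{bilina} you should multiply by $\xi^{\alpha/2}$ \emph{before} taking the square root (equivalently, take the root and then multiply by $\xi^{\alpha/4}$), not ``multiply by $\xi^{\alpha/4}$ and take a square root''.
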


\begin{proof}

We start by proving \eqref{lin}. Let $ \epsilon > 0$ be arbitrary. For a fixed $q_{1} \in \Z$, $p$ is an ordinary polynomial of degree two in $\xi_{1}$, so that if we define \[ D_{1} \coloneqq \frac{q^2-3 \xi^2}{9 \xi^2}(q_{1}+h)^{2}+ \frac{c}{3 \xi} \]
and \[ D_{2} \coloneqq \frac{q^2-3 \xi^2}{9 \xi^2}(q_{1}+h)^{2}+ \frac{c+K}{3 \xi}, \]
a simple calculation shows that
\begin{align*} & p(\xi_{1}, q_{1}+h) \in \intcc{c,c+K} \land \brbigg{\abs{\xi_{1}} < \frac{\xi}{2}} \Leftrightarrow \\ & \xi_{1} \in \begin{cases} \brBig{-\frac{(q_{1}+h)q}{3 \xi} + \brBig{ \intcc[\Big]{- D_{2}^{\frac{1}{2}}, - D_{1}^{\frac{1}{2}}} \cup \intcc[\Big]{ D_{1}^{\frac{1}{2}}, D_{2}^{\frac{1}{2}}}}} \cap \intoo{-\frac{\xi}{2},\frac{\xi}{2}} & \text{if} \ D_{1} \geq 0 \\
\brBig{-\frac{(q_{1}+h)q}{3 \xi} + \intcc[\Big]{- D_{2}^{\frac{1}{2}},  D_{2}^{\frac{1}{2}}}} \cap \intoo{-\frac{\xi}{2},\frac{\xi}{2}} & \text{if} \ D_{1} \leq 0, \ D_{2} \geq 0.
 \end{cases} \end{align*}
 Thus, summing the lengths of these intervals over all admissible $q_{1} \in \Z$, we obtain an upper bound for $\abs{B_{\tau, \xi, q}^{\mathrm{lin}}}$.
We divide this approach into four cases.

(i) \underline{$q^2 - 3 \xi^2 > 0$ and $c \geq 0$}: We first note that from the definition of $B_{\tau, \xi, q}^{\mathrm{lin}}$, it follows that  \[ \abs{q_{1}} \leq \frac{1}{2} \absBig{q_{1}+\frac{q}{2}+h} + \frac{1}{2} \absBig{\frac{q}{2}-q_{1}-h} + \abs{h} \lesssim (N_{1} \lor N_{2}). \] Furthermore, the assumptions $q^2 - 3 \xi^2 > 0$ and $c \geq 0$ imply $D_{1} \geq 0$ for all $q_{1} \in \Z$, so that we obtain 
\[ \abs{B_{\tau, \xi, q}^{\mathrm{lin}}} \lesssim \sum_{\abs{q_{1}} \lesssim (N_{1} \lor N_{2})} \underbrace{\absBig{\brBig{-\frac{(q_{1}+h)q}{3 \xi} + \brBig{ \intcc[\Big]{- D_{2}^{\frac{1}{2}}, - D_{1}^{\frac{1}{2}}} \cup \intcc[\Big]{ D_{1}^{\frac{1}{2}}, D_{2}^{\frac{1}{2}}}}} \cap \intoo{-\frac{\xi}{2},\frac{\xi}{2}}}}_{\eqcolon a(q_{1})}. \]
From this estimate, we can immediately see that in the case  $\xi \leq \frac{1}{(N_1 \lor N_2)}$, it follows from $ a(q_1) \leq \xi $ that
\[ \abs{B_{\tau, \xi, q}^{\mathrm{lin}}} \lesssim \sum_{\abs{q_{1}} \lesssim (N_{1} \lor N_{2})} a(q_1) \leq \frac{1}{(N_1 \lor N_2)} \sum_{\abs{q_{1}} \lesssim (N_{1} \lor N_{2})} 1 \lesssim 1 \leq (N_1 \lor N_2)^{2 \epsilon} K, \] so that we may restrict our further investigations to $\xi > \frac{1}{(N_1 \lor N_2)}$.
In addition to $a(q_1) \leq \xi$, we also have \[ a(q_1) \leq \absBig{\intcc[\Big]{D_{1}^{\frac{1}{2}},D_{2}^{\frac{1}{2}}}} = \frac{K}{3\xi} \frac{1}{D_{1}^{\frac{1}{2}} + D_{2}^{\frac{1}{2}}} \leq \frac{K}{3\xi} \frac{(3 \xi)^{\frac{1}{2}}}{K^{\frac{1}{2}}} = \frac{K^{\frac{1}{2}}}{(3\xi)^{\frac{1}{2}}}, \] which allows us to conclude that $ a(q_1) \lesssim K^{\frac{1}{2}}$. Thus, we obtain
\begin{align*} \sum_{\abs{q_{1}} \lesssim (N_{1} \lor N_{2})} a(q_1) &\lesssim K^{\frac{1}{2}}+\sum_{\substack{\abs{q_{1}} \lesssim (N_{1} \lor N_{2}) \\ q_1 \notin \set{0,-1}}} \absBig{\intcc[\Big]{D_{1}^{\frac{1}{2}},D_{2}^{\frac{1}{2}}}} \\ & \lesssim K^{\frac{1}{2}} + \frac{K^{\frac{1}{2}}}{\xi^{\frac{1}{2}}} \sum_{\substack{\abs{q_{1}} \lesssim (N_{1} \lor N_{2}) \\ q_1 \notin \set{0,-1}}} \brBig{ \frac{q^2-3\xi^2}{3\xi K} (q_1 + h)^2 + 1}^{-\frac{1}{2}} \end{align*} and a subsequent integral comparison yields
\begin{align*} ... & \lesssim  K^{\frac{1}{2}} + \frac{K^{\frac{1}{2}}}{\xi^{\frac{1}{2}}} \int_{0}^{\tilde{c} (N_1 \lor N_2)} \brBig{ \frac{q^2-3\xi^2}{3\xi K} y^2 + 1}^{-\frac{1}{2}} \mathrm{d}y \\ & \sim K^{\frac{1}{2}} + \frac{K}{(q^2-3\xi^2)^\frac{1}{2}} \int_{0}^{\tilde{c} \br{\frac{q^2-3\xi^2}{3 \xi K}}^{\frac{1}{2}} (N_1 \lor N_2)} (1+z^2)^{-\frac{1}{2}} \mathrm{d}z.   \end{align*}
Now, taking $ 1 \lesssim q^2-3\xi^2 \lesssim (N_1 \lor N_2)^2 $, $ \xi > \frac{1}{(N_1 \lor N_2)}$ and $K \geq 1$ into consideration, we get \[ ... \lesssim K \int_{0}^{\widetilde{\tilde{c}} (N_1 \lor N_2)^\frac{5}{2}} (1+z^2)^{-\frac{1}{2}} \mathrm{d}z \lesssim K \ln \brBig{(N_1 \lor N_2)^\frac{5}{2} + 1} \lesssim_{\epsilon} K (N_1 \lor N_2)^{2 \epsilon} \] and this concludes the discussion of case (i).

(ii) \underline{$q^2 - 3 \xi^2 > 0$ and $ c < 0$}: We divide this case into two subcases.
 
(ii.1) \underline{$K + c \geq 0$}: In this situation we easily verify \[ D_1 \geq 0 \Leftrightarrow \abs{q_1 + h} \geq \brBig{\frac{3\xi \abs{c}}{q^2-3\xi^2}}^{\frac{1}{2}} \ \text{and} \ D_2 \geq 0 \ \forall q_1 \in \Z, \]
allowing us to conclude \[ \abs{B_{\tau, \xi, q}^{\mathrm{lin}}} \lesssim \sum_{\brbig{\frac{3\xi \abs{c}}{q^2-3\xi^2}}^{\frac{1}{2}} \leq \abs{q_1 + h} \lesssim (N_1 \lor N_2)} a(q_1) \ + \sum_{\abs{q_1 + h} < \brbig{\frac{3\xi \abs{c}}{q^2-3\xi^2}}^{\frac{1}{2}}} b(q_1) \eqcolon (I) + (II), \] 
where $a(q_1)$ is defined as in case (i), and \[ b(q_1) \coloneq \absBig{\brBig{-\frac{(q_{1}+h)q}{3 \xi} + \intcc[\Big]{- D_{2}^{\frac{1}{2}},  D_{2}^{\frac{1}{2}}}} \cap \intoo{-\frac{\xi}{2},\frac{\xi}{2}}}. \] We first turn our attention to contribution $(II)$. If $\xi \leq 1$, then $b(q_1) \leq \xi \leq 1$, from which it follows that \[ (II) \leq \sum_{\abs{q_1 + h} < \brbig{\frac{3\xi \abs{c}}{q^2-3\xi^2}}^{\frac{1}{2}}} 1  \lesssim \brbigg{\brBig{\frac{3\xi \abs{c}}{q^2-3\xi^2}}^{\frac{1}{2}} + 1} \lesssim K^{\frac{1}{2}} \leq (N_1 \lor N_2)^{2\epsilon} K , \] where the final step relies on the inequalities $\xi \leq 1$, $q^2-3\xi^2 \gtrsim 1$ and $\abs{c} \leq K \geq 1$. In the case $\xi > 1$, we have
\[ b(q_1) \leq \absBig{\intcc[\Big]{- D_{2}^{\frac{1}{2}},  D_{2}^{\frac{1}{2}}}} = 2 \brBig{\frac{q^2-3\xi^2}{9\xi^2}(q_1 + h)^2 + \frac{K+c}{3\xi}}^{\frac{1}{2}} \leq 2 \frac{K^\frac{1}{2}}{(3\xi)^\frac{1}{2}} \lesssim K^{\frac{1}{2}}, \] with the second-to-last step justified by $ \abs{q_1 + h} < \brbig{\frac{3\xi \abs{c}}{q^2-3\xi^2}}^{\frac{1}{2}} $. This, together with $\xi >1$, $q^2-3\xi^2 \gtrsim 1$ and $\abs{c} \leq K \geq 1$,   leads to \[ (II) \lesssim \frac{K^\frac{1}{2}}{\xi^{\frac{1}{2}}} \sum_{\abs{q_1 + h} < \brbig{\frac{3\xi \abs{c}}{q^2-3\xi^2}}^{\frac{1}{2}}} 1 \lesssim \frac{K^\frac{1}{2}}{\xi^{\frac{1}{2}}} \brbigg{\brBig{\frac{3\xi \abs{c}}{q^2-3\xi^2}}^{\frac{1}{2}} + 1} \lesssim K  \leq (N_1 \lor N_2)^{2\epsilon} K, \] so that we have, in total, verified that \[ (II) \lesssim (N_1 \lor N_2)^{2 \epsilon} K. \] Let us now consider $(I)$. By the same reasoning as in case (i), we can restrict ourselves to the case $\xi > \frac{1}{(N_1 \lor N_2)}$. It is immediately clear that \[ a(q_1) \leq \begin{cases} \xi \leq 1 \leq K^\frac{1}{2} & \text{if} \ \xi \leq 1 \\ \absBig{\intcc[\Big]{D_{1}^{\frac{1}{2}},D_{2}^{\frac{1}{2}}}} \lesssim \frac{K^\frac{1}{2}}{\xi^\frac{1}{2}} \lesssim K^\frac{1}{2} & \text{if} \ \xi > 1  \end{cases} \] holds for all the values of $q_1$ occurring in the sum, so that we can write
\begin{align*} (I) & \lesssim K^\frac{1}{2} + \sum_{\brbig{\frac{3\xi \abs{c}}{q^2-3\xi^2}}^{\frac{1}{2}} + 1 \leq \abs{q_1 + h} \lesssim (N_1 \lor N_2)} \absBig{\intcc[\Big]{D_{1}^{\frac{1}{2}},D_{2}^{\frac{1}{2}}}} \\ & \lesssim K^\frac{1}{2} + \frac{K}{\xi^\frac{1}{2}} \sum_{\brbig{\frac{3\xi \abs{c}}{q^2-3\xi^2}}^{\frac{1}{2}} + 1 \leq \abs{q_1 + h} \lesssim (N_1 \lor N_2)} \brBig{\frac{q^2-3\xi^2}{3 \xi}(q_1 + h)^2 + K - \abs{c}}^{-\frac{1}{2}}.  \end{align*}
An integral comparison then leads to \begin{align*} ... & \lesssim K^\frac{1}{2} + \frac{K}{\xi^\frac{1}{2}} \int_{\brbig{\frac{3\xi \abs{c}}{q^2-3\xi^2}}^\frac{1}{2}}^{\tilde{c} (N_1 \lor N_2)} \brBig{\frac{q^2-3\xi^2}{3 \xi}y^2 + K - \abs{c}}^{-\frac{1}{2}} \mathrm{d}y \\ & \sim K^\frac{1}{2} + \frac{K}{(q^2-3\xi^2)^\frac{1}{2}} \int_{\abs{c}^\frac{1}{2}}^{\tilde{c}  \brbig{\frac{q^2-3\xi^2}{3 \xi}}^\frac{1}{2} (N_1 \lor N_2)} (z^2+ K- \abs{c})^{-\frac{1}{2}} \mathrm{d}z \\ & \lesssim  K^\frac{1}{2} + \frac{K}{(q^2-3\xi^2)^\frac{1}{2}} \int_{0}^{\tilde{c}  \brbig{\frac{q^2-3\xi^2}{3 \xi}}^\frac{1}{2} (N_1 \lor N_2) - \abs{c}^\frac{1}{2}} (u^2+ 1)^{-\frac{1}{2}} \mathrm{d}u,   \end{align*}
where we substituted $z = \brbig{\frac{q^2-3\xi^2}{3 \xi}}^\frac{1}{2}y$, followed by $u = z - \abs{c}^\frac{1}{2}$, and taking $ 1 \lesssim q^2-3\xi^2 \lesssim (N_1 \lor N_2)$, $ \xi > \frac{1}{(N_1 \lor N_2)}$ and $K \geq 1$ into account once again, we obtain - just as in (i) - that \[ (I) \lesssim_{\epsilon} (N_1 \lor N_2)^{2 \epsilon} K. \]

(ii.2) \underline{$K + c < 0$}: Compared to (ii.1), the only difference in this case is that \[ D_2 \geq 0 \Leftrightarrow \abs{q_1 + h} \geq \brBig{\frac{3 \xi (\abs{c} - K)}{q^2-3\xi^2}}^\frac{1}{2}. \] We thus obtain
\[ \abs{B_{\tau, \xi, q}^{\mathrm{lin}}} \lesssim \sum_{\brbig{\frac{3\xi \abs{c}}{q^2-3\xi^2}}^{\frac{1}{2}} \leq \abs{q_1 + h} \lesssim (N_1 \lor N_2)} a(q_1) \ + \sum_{\brbig{\frac{3 \xi (\abs{c} - K)}{q^2-3\xi^2}}^\frac{1}{2} \leq \abs{q_1 + h} < \brbig{\frac{3\xi \abs{c}}{q^2-3\xi^2}}^{\frac{1}{2}}} b(q_1) \] and the first sum can be estimated in exactly the same way as in (ii.1), since the estimates did not rely on $ K + c \geq 0$.
Moreover, as in (ii.1), we may again rely on $b(q_1) \leq \xi$ and $b(q_1) \lesssim \frac{K^\frac{1}{2}}{\xi^\frac{1}{2}}$, which yields \begin{align*} \sum_{\brbig{\frac{3 \xi (\abs{c} - K)}{q^2-3\xi^2}}^\frac{1}{2} \leq \abs{q_1 + h} < \brbig{\frac{3\xi \abs{c}}{q^2-3\xi^2}}^{\frac{1}{2}}} b(q_1) &\lesssim \xi \brbigg{\brBig{\frac{3\xi \abs{c}}{q^2-3\xi^2}}^{\frac{1}{2}} - \brBig{\frac{3 \xi (\abs{c} - K)}{q^2-3\xi^2}}^\frac{1}{2} + 1} \\ & \lesssim K^\frac{1}{2} \lesssim (N_1 \lor N_2)^{2 \epsilon} K  \end{align*} in the case $\xi \leq 1$ and 
\begin{align*} \sum_{\brbig{\frac{3 \xi (\abs{c} - K)}{q^2-3\xi^2}}^\frac{1}{2} \leq \abs{q_1 + h} < \brbig{\frac{3\xi \abs{c}}{q^2-3\xi^2}}^{\frac{1}{2}}} b(q_1) &\lesssim \frac{K^\frac{1}{2}}{\xi^\frac{1}{2}} \brbigg{\brBig{\frac{3\xi \abs{c}}{q^2-3\xi^2}}^{\frac{1}{2}} - \brBig{\frac{3 \xi (\abs{c} - K)}{q^2-3\xi^2}}^\frac{1}{2} + 1} \\ & \lesssim K \lesssim (N_1 \lor N_2)^{2 \epsilon} K \end{align*} in the case $\xi > 1$. Here, the inequalities $q^2-3\xi^2 \gtrsim 1$ and $K < \abs{c}$ were employed in both cases.

(iii) \underline{$q^2 - 3 \xi^2 < 0$ and $ c > 0$}: We observe that in this case, \[ D_1 \geq 0 \Leftrightarrow \abs{q_1+h} \leq \brBig{\frac{3\xi c}{3\xi^2-q^2}}^\frac{1}{2} \ \text{and} \ D_2 \geq 0 \Leftrightarrow \abs{q_1+h} \leq \brBig{\frac{3\xi (c+K)}{3\xi^2-q^2}}^\frac{1}{2}  \] hold. With $a(q_1)$ and $b(q_1)$ as before, we thus obtain
\[ \abs{B_{\tau, \xi, q}^{\mathrm{lin}}} \lesssim \sum_{\abs{q_1 + h} \leq \brbig{\frac{3 \xi c}{3\xi^2-q^2}}^\frac{1}{2}} a(q_1) \ + \sum_{\brbig{\frac{3 \xi c}{3\xi^2-q^2}}^\frac{1}{2} < \abs{q_1 + h} \leq \brbig{\frac{3\xi (c+K)}{3\xi^2-q^2}}^{\frac{1}{2}}} b(q_1) \] and again, the two sums are to be estimated one after the other.
First, we note that for the $q_1$, over which the sums extend, we have $a(q_1) \leq \xi$, $a(q_1) \lesssim \frac{K^\frac{1}{2}}{\xi^\frac{1}{2}}$, $b(q_1) \leq \xi$ and $b(q_1) \lesssim \frac{K^\frac{1}{2}}{\xi^\frac{1}{2}}$ at our disposal. If we denote the second sum by $(II)$, it follows that 
\[ (II) \lesssim \xi \brbigg{\brBig{\frac{3\xi (c+K)}{3\xi^2-q^2}}^\frac{1}{2} - \brBig{\frac{3\xi c}{3\xi^2-q^2}}^\frac{1}{2} + 1} \lesssim K^\frac{1}{2} \leq (N_1 \lor N_2)^{2 \epsilon} K \] in the case $\xi \leq 1$, and 
\[ (II) \lesssim \frac{K^\frac{1}{2}}{\xi^\frac{1}{2}} \brbigg{\brBig{\frac{3\xi (c+K)}{3\xi^2-q^2}}^\frac{1}{2} - \brBig{\frac{3\xi c}{3\xi^2-q^2}}^\frac{1}{2} + 1} \lesssim K \leq (N_1 \lor N_2)^{2 \epsilon} K \] in the case $\xi > 1$. In both estimates, we made use of $3\xi^2 - q^2 \gtrsim 1$ and $c > 0$.
For the first sum $(I)$, we aim once again to use an integral comparison. Due to our bounds for $a(q_1)$, we have \begin{align*} (I) &\lesssim K^\frac{1}{2} + \sum_{\abs{q_1 + h} \leq \brbig{\frac{3 \xi c}{3\xi^2-q^2}}^\frac{1}{2}-1} \absBig{\intcc[\Big]{D_{1}^{\frac{1}{2}},D_{2}^{\frac{1}{2}}}} \\ & \lesssim K^\frac{1}{2} + \frac{K}{\xi^\frac{1}{2} c^\frac{1}{2}} \sum_{\abs{q_1 + h} \leq \brbig{\frac{3 \xi c}{3\xi^2-q^2}}^\frac{1}{2}-1} \brBig{1 - \frac{3\xi^2-q^2}{3\xi c} (q_1 + h)^2}^{-\frac{1}{2}} \\ & \lesssim K^\frac{1}{2} + \frac{K}{\xi^\frac{1}{2} c^\frac{1}{2}} \int_{0}^{\brbig{\frac{3 \xi c}{3\xi^2-q^2}}^\frac{1}{2}} \brBig{1-\frac{3\xi^2-q^2}{3\xi c}y^2}^{-\frac{1}{2}} \mathrm{d} y \\ & \lesssim K^\frac{1}{2} + \frac{K}{(3\xi^2-q^2)^\frac{1}{2}} \underbrace{\int_{0}^{1} (1-z^2)^{-\frac{1}{2}} \mathrm{d} z}_{= \frac{\pi}{2}} \\ &\lesssim (N_1 \lor N_2)^{2 \epsilon} K, \end{align*}
making use of $3\xi^2-q^2 \gtrsim 1$ in the final step. With that, case (iii) is also settled.

(iv) \underline{$q^2 - 3 \xi^2 < 0$ and $ c \leq 0$}: In this final scenario, we have \[ D_1 \leq 0 \ \forall q_1 \in \Z \ \text{and} \ D_2 \geq 0 \Leftrightarrow \abs{q_1 + h} \leq \brBig{\frac{3\xi (K- \abs{c})}{3\xi^2-q^2}}^\frac{1}{2}, \] although the latter holds only under the additional assumption $ \abs{c} \leq K$. With $b(q_1)$ as previously defined, it follows that \[ \abs{B_{\tau, \xi, q}^{\mathrm{lin}}} \lesssim \sum_{\abs{q_1 + h} \leq \brbig{\frac{3 \xi (K-\abs{c})}{3\xi^2-q^2}}^\frac{1}{2}} b(q_1).  \]
For all $q_1$ appearing in the sum, we again have $b(q_1) \leq \xi$ and $b(q_1) \lesssim \frac{K^\frac{1}{2}}{\xi^{\frac{1}{2}}}$, so that we obtain \[ ... \lesssim \xi \brbigg{\brBig{\frac{3 \xi (K-\abs{c})}{3\xi^2-q^2}}^\frac{1}{2} + 1} \lesssim K^\frac{1}{2} \leq (N_1 \lor N_2)^{2 \epsilon} K \] in the case $\xi \leq 1$ and \[ ... \lesssim \frac{K^\frac{1}{2}}{\xi^\frac{1}{2}} \brbigg{\brBig{\frac{3 \xi (K-\abs{c})}{3\xi^2-q^2}}^\frac{1}{2} + 1} \lesssim K \leq (N_1 \lor N_2)^{2 \epsilon} K \] in the case $\xi > 1$.
In these last two estimates, we made use of $3\xi^2 - q^2 \gtrsim 1$ and thus the proof in the final case is also complete.

Now, collecting the results from cases (i) to (iv), we obtain \[ \abs{B_{\tau, \xi, q}^{\mathrm{lin}}}^\frac{1}{2} \lesssim_{\epsilon} (N_1 \lor N_2)^{\epsilon} K^\frac{1}{2} \] and taking the supremum on both sides yields assertion \eqref{lin}.

If we now wish to prove \eqref{bilina}, we need to take into account that we can no longer rely on $a(q_1), b(q_1) \leq \xi$. However, a careful examination of the proof of \eqref{lin} shows that this is not problematic, since \begin{enumerate} \item[(A)] we still have $a(q_1), b(q_1) \lesssim \frac{K^\frac{1}{2}}{\xi^\frac{1}{2}}$ at our disposal, and \item[(B)] the integral estimates in the cases (i) and (ii.1) work under the assumption $\xi \gtrsim 1$, which is already included in the definition of $B_{\tau, \xi, q}^{\alpha}$. \end{enumerate}
We thus obtain \[ \abs{B_{\tau,\xi,q}^\alpha} \lesssim_{\epsilon}  \frac{K}{\abs{3\xi^2-q^2}^\frac{1}{2}} (N_1 \lor N_2)^{2 \epsilon} + \frac{K^\frac{1}{2}}{\xi^\frac{1}{2}} \] and multiplication with $\xi^{\frac{\alpha}{2}}$ gives \[ \xi^{\frac{\alpha}{2}} \cdot \abs{B_{\tau,\xi,q}^\alpha} \lesssim_{\epsilon} \frac{\xi^{\frac{\alpha}{2}} K}{\abs{3\xi^2-q^2}^\frac{1}{2}} (N_1 \lor N_2)^{2 \epsilon} + \frac{K^\frac{1}{2}}{\xi^\frac{1-\alpha}{2}}.  \]
Since we now have $\abs{3\xi^2-q^2} \gtrsim \xi^\alpha$, $\xi \gtrsim 1$ and $\alpha \in \intcc{0,1}$, it follows that\[ \xi^{\frac{\alpha}{2}} \cdot \abs{B_{\tau,\xi,q}^\alpha} \lesssim_{\epsilon} (N_1 \lor N_2)^{2 \epsilon} K \] and taking square roots, followed by forming the supremum yields the claimed result. 
\end{proof}

With the proof of Lemma \ref{mes}, we have now completed the necessary groundwork to prove  the first main result of this paper.

\begin{proof}[Proof of Theorem \ref{L4}]

We fix arbitrary positive numbers $\epsilon > 0$ and $\epsilon' > 0$ and consider $u \in X_{\epsilon, \frac{1}{2}+\epsilon'}$.
Since we want to establish a linear estimate and the phase function $\phi(\xi,q) = \xi(\xi^2+q^2)$ is odd, we may assume without loss of generality that $\text{supp}(\hat{u}) \subseteq \R \times \mathbb{R}^{>0} \times \mathbb{Z}$ for the remainder of the proof.
We begin by noting that \eqref{ineq1} follows from \begin{equation} \begin{aligned} &\norm{(P_{N_1}Q_{L_1}u)(P_{N_2}Q_{L_2}u)}_{L_{txy}^2} \\ &\lesssim_{\epsilon} (N_1 \lor N_2)^{\frac{\epsilon}{2}} (L_1 \land L_2)^{\frac{1}{2}} (L_1 \lor L_2)^{\frac{1}{2}}  \norm{P_{N_1}Q_{L_1}u}_{L_{txy}^2} \norm{P_{N_2}Q_{L_2}u}_{L_{txy}^2} \end{aligned} \label{bilineq1} \end{equation} via dyadic summation, as this implies 
\begin{align*} &\norm{u}_{L_{txy}^4}^2 = \norm{u^2}_{L_{txy}^2} \\ & \lesssim \sum_{N_i, L_i} \norm{(P_{N_1}Q_{L_1}u)(P_{N_2}Q_{L_2}u)}_{L_{txy}^2} \\ & \lesssim_{\epsilon, \epsilon'} \sum_{N_i, L_i} N_1^{-\frac{\epsilon}{2}} N_2^{-\frac{\epsilon}{2}}L_1^{-\epsilon'}L_2^{-\epsilon'}N_1^{\epsilon}N_2^{\epsilon}L_1^{\frac{1}{2}+\epsilon'}L_2^{\frac{1}{2}+\epsilon'} \norm{P_{N_1}Q_{L_1}u}_{L_{txy}^2} \norm{P_{N_2}Q_{L_2}u}_{L_{txy}^2} \\ & \lesssim \brBigg{\sum_{N_i,L_i} N_1^{-\frac{\epsilon}{2}} N_2^{-\frac{\epsilon}{2}}L_1^{-\epsilon'}L_2^{-\epsilon'}} \norm{u}_{X_{\epsilon,\frac{1}{2}+\epsilon'}}^2 \lesssim \norm{u}_{X_{\epsilon,\frac{1}{2}+\epsilon'}}^2. \end{align*}
We therefore turn to the proof of \eqref{bilineq1}. By introducing the following two Fourier projectors \[ P_{\set{\abs{3\xi^2-q^2} > 1}}u \coloneqq \Fcal_{xy}^{-1} \chi_{\set{\abs{3\xi^2-q^2} > 1}} \Fcal_{xy}u \] and \[ P_{\set{\abs{3\xi^2-q^2} \leq 1}}u \coloneqq \Fcal_{xy}^{-1} \chi_{\set{\abs{3\xi^2-q^2} \leq  1}} \Fcal_{xy}u, \] we separate the left-hand side of \eqref{bilineq1} into two components, namely \[ (I) \coloneqq \norm{P_{\set{\abs{3\xi^2-q^2} > 1}}((P_{N_1}Q_{L_1}u)(P_{N_2}Q_{L_2}u))}_{L_{txy}^2} \] and
\[ (II) \coloneqq \norm{P_{\set{\abs{3\xi^2-q^2} \leq 1}}((P_{N_1}Q_{L_1}u)(P_{N_2}Q_{L_2}u))}_{L_{txy}^2}, \] both of which can now be suitably estimated. Using Parseval's identity, the Cauchy-Schwarz inequality, and Fubini's theorem, we obtain \[ (I) \lesssim \brBigg{\sup_{(\tau,\xi,q) \in \R \times \R^{>0} \times \mathbb{Z}} \abs{A_{\tau,\xi,q}}^{\frac{1}{2}}} \norm{P_{N_1}Q_{L_1}u}_{L_{txy}^2} \norm{P_{N_2}Q_{L_2}u}_{L_{txy}^2} \]
with \begin{align*} &A_{\tau,\xi,q} \coloneqq  \setbig{ (\tau_1,\xi_1,q_1) \in \R \times \R^{>0} \times \Z \ \big| \ \xi - \xi_1 > 0, \ \abs{3\xi^2-q^2} > 1, \ \abs{(\xi_1,q_1)} \lesssim N_1, \\ &  \abs{(\xi-\xi_1,q-q_1)} \lesssim N_2, \ \abs{\tau_1-\phi(\xi_1,q_1)} \lesssim L_1, \ \abs{\tau-\tau_1 - \phi(\xi-\xi_1,q-q_1)} \lesssim L_2}. \end{align*}
Furthermore, the triangle inequality gives \[ \abs{\tau - \phi(\xi_1,q_1) - \phi(\xi-\xi_1,q-q_1)} \leq \abs{\tau_1-\phi(\xi_1,q_1)} + \abs{\tau-\tau_1 - \phi(\xi-\xi_1,q-q_1)} \lesssim (L_1 \lor L_2), \]
allowing us to deduce \[ \abs{A_{\tau,\xi,q}}^{\frac{1}{2}} \lesssim (L_1 \land L_2)^{\frac{1}{2}} \abs{B_{\tau,\xi,q}}^\frac{1}{2} \] with 
\begin{align*} &B_{\tau,\xi,q} \coloneqq \setbig{ (\xi_1,q_1) \in \R^{>0} \times \Z \ \big| \ \xi - \xi_1 > 0, \ \abs{3\xi^2-q^2} > 1, \ \abs{(\xi_1,q_1)} \lesssim N_1, \\ &  \abs{(\xi-\xi_1,q-q_1)} \lesssim N_2, \ \abs{\tau-\phi(\xi_1,q_1)-\phi(\xi-\xi_1,q-q_1)} \lesssim (L_1 \lor L_2)}. \end{align*}
We now define the function \[ h: \Z \rightarrow \set{0,\frac{1}{2}}, \quad q \mapsto h(q) \coloneqq \begin{cases} 0 & \text{if} \ q \ \text{is even} \\ \frac{1}{2} & \text{if} \ q \ \text{is odd} \end{cases} \] and introduce new variables \[ \tilde{\xi_1} \coloneqq \xi_1-\frac{\xi}{2} \quad \text{and} \quad \tilde{q_1} \coloneqq q_1-\frac{q}{2} - h(q). \]
This leads to \[ \xi_1^3 + (\xi-\xi_1)^3 = \brbigg{\tilde{\xi_1} + \frac{\xi}{2}}^3 + \brbigg{\frac{\xi}{2}-\tilde{\xi_1}}^3 = \frac{\xi^3}{4} + 3\xi \tilde{\xi_1}^2, \] 
\begin{align*} \xi_1 q_1^2 = \brbigg{\tilde{\xi_1} + \frac{\xi}{2}}\brbigg{\frac{q}{2} + \tilde{q_1} + h(q)}^2 &= \tilde{\xi_1}(\tilde{q_1}+h(q))^2 +  \tilde{\xi_1} q (\tilde{q_1} + h(q)) + \tilde{\xi_1} \frac{q^2}{4} \\ &  \ \ \ + \frac{\xi}{2} (\tilde{q_1} + h(q))^2 + \frac{\xi}{2} q (\tilde{q_1} + h(q)) + \frac{\xi q^2}{8} \end{align*}
and \begin{align*} (\xi - \xi_1) (q-q_1)^2 &= \brbigg{\frac{\xi}{2} - \tilde{\xi_1}} \brbigg{\frac{q}{2} - (\tilde{q_1} + h(q))}^2 \\ &= \frac{\xi q^2}{8} - \frac{\xi}{2}q(\tilde{q_1} + h(q)) + \frac{\xi}{2}(\tilde{q_1} + h(q))^2 - \tilde{\xi_1} \frac{q^2}{4} + \tilde{\xi_1} q (\tilde{q_1} + h(q)) \\ & \ \ \ - \tilde{\xi_1}(\tilde{q_1} + h(q))^2, \end{align*}
and we thus obtain \[  \phi(\xi_1,q_1) + \phi(\xi - \xi_1,q-q_1) =  \underbrace{\xi(3\tilde{\xi_1}^2 + (\tilde{q_1} + h(q))^2) + 2q(\tilde{q_1} + h(q))\tilde{\xi_1}}_{= p(\tilde{\xi_1},\tilde{q_1} + h(q))} + \frac{\xi}{4}(\xi^2+q^2).	  \]
With this, we arrive at \begin{align*} &\abs{\tau - \phi(\xi_1,q_1) - \phi(\xi - \xi_1,q-q_1)} \leq \tilde{c} (L_1 \lor L_2) \Leftrightarrow \\ & -\tilde{c}(L_1 \lor L_2) + \tau - \frac{\xi}{4}(\xi^2+q^2) \leq p(\tilde{\xi_1},\tilde{q_1} + h(q)) \leq \tilde{c} (L_1 \lor L_2) + \tau - \frac{\xi}{4}(\xi^2+q^2), \end{align*} and upon setting \[ c(\tau,\xi,q) \coloneqq -\tilde{c}(L_1 \lor L_2) + \tau - \frac{\xi}{4}(\xi^2+q^2) \quad \text{and} \quad K \coloneqq 2\tilde{c}(L_1 \lor L_2), \] the substitution of the new variables, together with the translation invariance of the counting measure, yields
\[ \abs{B_{\tau,\xi,q}}^{\frac{1}{2}} = \abs{B_{\tau,\xi,q}^{\mathrm{lin}}(c,K)}^{\frac{1}{2}}, \] with $B_{\tau,\xi,q}^{\mathrm{lin}} = B_{\tau,\xi,q}^{\mathrm{lin}}(c,K)$ defined exactly as in Lemma \ref{mes}. Thus, we may now invoke the previously established estimate \eqref{lin}, from which we obtain \[ \abs{A_{\tau,\xi,q}}^{\frac{1}{2}} \lesssim_{\epsilon} (L_1 \land L_2)^\frac{1}{2} (L_1 \lor L_2)^\frac{1}{2} (N_1 \lor N_2)^\frac{\epsilon}{2} \] and this concludes the proof of \eqref{bilineq1} in the case $\abs{3\xi^2-q^2} >1$.
In order to control $(II)$, we proceed by duality and write \[ (II) \sim \sup_{\substack{f\in L_{txy}^2 \\ \norm{f}_{L_{txy}^2}\leq 1}} \abs[\bigg]{\int_{\R^2 \times \mathbb{T}} P_{\set{\abs{3\xi^2-q^2} \leq 1}}((P_{N_1}Q_{L_1}u)(P_{N_2}Q_{L_2}u)) \cdot \overline{f} \mathrm{d} (t,x,y)}. \]
Then, applying Parseval's identity, the Cauchy-Schwarz inequality and Fubini's theorem, we again obtain \[ ... \lesssim \brBigg{\sup_{(\tau_1,\xi_1,q_1) \in \R \times \R^{>0} \times \Z} \abs{\tilde{A}_{\tau_1,\xi_1,q_1}}^\frac{1}{2}} \norm{P_{N_1}Q_{L_1}u}_{L_{txy}^2} \norm{P_{N_2}Q_{L_2}u}_{L_{txy}^2} \underbrace{\norm{f}_{L_{txy}^2}}_{\leq 1} \]
with   \begin{align*} &\tilde{A}_{\tau_1,\xi_1,q_1} \coloneqq  \setbig{ (\tau,\xi,q) \in \R \times \R^{>0} \times \Z \ \big| \ \xi - \xi_1 > 0, \ \abs{3\xi^2-q^2} \leq  1, \ \abs{(\xi_1,q_1)} \lesssim N_1, \\ &  \abs{(\xi-\xi_1,q-q_1)} \lesssim N_2, \ \abs{\tau_1-\phi(\xi_1,q_1)} \lesssim L_1, \ \abs{\tau-\tau_1 - \phi(\xi-\xi_1,q-q_1)} \lesssim L_2}, \end{align*} and from the definition of $\tilde{A}_{\tau_1,\xi_1,q_1}$, we further deduce \begin{align*} \abs{\tilde{A}_{\tau_1,\xi_1,q_1}}^\frac{1}{2} &\lesssim (L_1 \lor L_2)^\frac{1}{2} \brBigg{ \sum_{\abs{q} \lesssim (N_1 \lor N_2)} \int_{\R} \chi_{\set{\abs{3\xi^2-q^2} \leq 1}} \mathrm{d} \xi }^\frac{1}{2} \\ & \lesssim (L_1 \lor L_2)^\frac{1}{2} \brBigg{ 1 + \sum_{\substack{\abs{q} \lesssim (N_1 \lor N_2) \\ q \neq 0} }\frac{1}{\abs{q}}}^\frac{1}{2} \\ & \lesssim (L_1 \lor L_2)^\frac{1}{2} \ln(\tilde{c} (N_1 \lor N_2))^\frac{1}{2} \\ & \lesssim_{\epsilon} (L_1 \lor L_2)^\frac{1}{2} (N_1 \lor N_2)^\frac{\epsilon}{2} \underbrace{(L_1 \land L_2)^\frac{1}{2}}_{\geq 1}.  \end{align*}
Thus, \eqref{bilineq1} is also established in the case $\abs{3\xi^2 - q^2} \leq 1$, completing the proof of Theorem \ref{L4}.
\end{proof}

\begin{rem}

If we interpolate \eqref{ineq1} with the trivial estimate \eqref{trivial L2}, we obtain \begin{equation} \label{ineq1d} \norm{u}_{L_{txy}^{4-}} \lesssim \norm{u}_{X_{0+,\frac{1}{2}-}} \end{equation} and this shows that we can essentially rely on \eqref{ineq1}, even if $b$ happens to be slightly smaller than $\frac{1}{2}$.

\end{rem}

Given the lack of dispersion in the $\mathbb{T}$-component, it is not surprising that such a linear $L^4$-estimate cannot achieve much more than the loss of an $\epsilon$-derivative. This is captured in the following proposition, which demonstrates the near-optimality of \eqref{ineq1}.

\begin{prop}
\label{failprop}

The estimate \begin{equation} \label{fail} \norm{u}_{L_{txy}^4} \lesssim \norm{u}_{X_{s,b}} \end{equation} fails for all $s<0$ and $b \in \R$.

\end{prop}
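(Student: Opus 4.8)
The plan is, for fixed but arbitrary $s<0$ and $b\in\R$, to produce a family $(u^{(q_0)})_{q_0\in\N}$ along which the right-hand side of \eqref{fail} tends to $0$ while the left-hand side stays bounded below by a positive constant. The underlying mechanism is that, for negative Sobolev exponents, a single high $\mathbb{T}$-frequency $e^{\pm iq_0y}$ is essentially free in $X_{s,b}$, whereas the linear propagator applied to a profile whose $x$-frequencies cluster near the origin merely transports that profile along $x$ by $q_0^2t$ --- and such transports do not change the $L^4$-norm. Concretely, fix a real even Schwartz function $g$ on $\R$ with $\hat g\in C_0^\infty(\R)$, $0\leq\hat g\leq1$, $\hat g\equiv1$ on $\intcc{-\frac12,\frac12}$ and $\text{supp}(\hat g)\seq\intcc{-1,1}$, fix $\theta\in C_0^\infty(\R)$ with $\theta\equiv1$ on $\intcc{-1,1}$, and set
\[ u_0=u_0^{(q_0)}(x,y)\deq g(x)\cos(q_0y),\qquad u=u^{(q_0)}\deq\theta(t)\,e^{-t\partial_x\Delta_{xy}}u_0. \]

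For the right-hand side one invokes the standard identity $\hat u(\tau,\xi,q)=\hat\theta\brbig{\tau-\phi(\xi,q)}\widehat{u_0}(\xi,q)$: since $\hat\theta$ is Schwartz, Fubini's theorem gives $\norm{u}_{X_{s,b}}\lesssim_b\norm{u_0}_{H^s(\RT)}$, the relevant constant being $\brbig{\int_\R\langle\sigma\rangle^{2b}\abs{\hat\theta(\sigma)}^2\d\sigma}^{1/2}$, which is finite for every $b\in\R$. The Fourier support of $u_0$ is contained in $\set{\abs\xi\leq1}\times\set{-q_0,q_0}$, where $\langle(\xi,q)\rangle\geq q_0$; since $s<0$ this yields $\norm{u_0}_{H^s(\RT)}\leq q_0^s\norm{u_0}_{L^2(\RT)}\lesssim q_0^s$, and hence
\[ \norm{u}_{X_{s,b}}\lesssim_b q_0^s\xrightarrow[q_0\to\infty]{}0. \]

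For the left-hand side one uses that $u=e^{-t\partial_x\Delta_{xy}}u_0$ on $\intcc{-1,1}\times\RT$, and that, because $\phi(\xi,q_0)=\phi(\xi,-q_0)=\xi^3+\xi q_0^2$, the propagator acts there only in the $x$-variable, so that
\[ u(t,x,y)=\cos(q_0y)\,G_t\brbig{x+q_0^2t}\quad\text{on}\quad\intcc{-1,1}\times\RT,\qquad G_t(z)\deq\int_\R\hat g(\xi)\,e^{iz\xi+it\xi^3}\,\d\xi. \]
Two elementary properties of $G_t$ are used: (i) $\xi\mapsto z\xi+t\xi^3$ is odd, so $G_t$ is real-valued (in particular the above $u$ is real); (ii) $G_t$ is uniformly nondegenerate in $L^4(\R)$ for $\abs t\leq1$, since $G_t(0)=\int\hat g(\xi)\cos(t\xi^3)\,\d\xi\geq\int_{-1/2}^{1/2}\cos(\tfrac18)\,\d\xi>0$ and $\abs{G_t'(z)}\leq\int\abs\xi\,\hat g(\xi)\,\d\xi\leq1$, so $\abs{G_t}\gtrsim1$ on an interval of fixed positive length, uniformly in $t$. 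Using translation invariance of Lebesgue measure in $x$ together with $\int_{\mathbb{T}}\cos^4(q_0y)\,\d y=\tfrac{3\pi}{4}$, we obtain
\[ \norm{u}_{L_{txy}^4(\RRT)}^4\geq\norm{u}_{L^4(\intcc{-1,1}\times\RT)}^4=\tfrac{3\pi}{4}\int_{-1}^{1}\norm{G_t}_{L^4(\R)}^4\,\d t\geq c_0>0, \]
with $c_0$ independent of $q_0$. Thus, were \eqref{fail} to hold with some constant $A=A(s,b)$, we would get $0<c_0^{1/4}\leq\norm{u}_{L_{txy}^4}\leq A\norm{u}_{X_{s,b}}\lesssim_b A\,q_0^s\to0$ as $q_0\to\infty$, a contradiction; hence \eqref{fail} fails for every $s<0$ and $b\in\R$. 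There is no genuinely hard step: the only estimate requiring a short argument is the uniform lower bound $\norm{G_t}_{L^4(\R)}\gtrsim1$ for $\abs t\leq1$, everything else reducing to the two bookkeeping computations above, with the substance of the argument lying entirely in the fact that a negative Sobolev exponent trivialises the cost of a high $\mathbb{T}$-mode while the flow only translates the accompanying $x$-profile.
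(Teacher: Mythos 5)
Your proof is correct and rests on the same counterexample idea as the paper: two symmetric $y$-modes $q=\pm q_0$ with $x$-frequencies of size $O(1)$, so that the $X_{s,b}$-norm scales like $q_0^s$ while the $L^4$-norm stays bounded below uniformly in $q_0$ owing to the resonant $q=0$ component of $u^2$. The only difference is cosmetic: you verify the $L^4$ lower bound in physical space through the explicit factorization $u=\cos(q_0y)\,G_t(x+q_0^2t)$ of the time-localized free solution, whereas the paper bounds the Fourier-side autocorrelation $\lVert \hat u_N\ast\hat u_N\rVert_{L^2}$ directly for a sharp-cutoff test function.
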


\begin{proof}

Let $s<0$ and $b\in \R$ be arbitrary. For $N \in \N$, we define the following sequence of functions via its Fourier transform: \[ \hat{u}_N(\tau,\xi,q) \coloneqq (\delta_{q,N}+\delta_{q,-N})\chi_{\intcc{-1,1}}(\xi) \chi_{\intcc{-1,1}}(\tau - \phi(\xi,q)). \]
Here, $\delta_{q,N}$ denotes the Kronecker delta.
A straightforward computation yields \begin{equation} \label{right} \norm{u_N}_{X_{s,b}}^2 \lesssim_{b} N^{2s} \end{equation} and an application of Parseval's identity gives \begin{align*} &\norm{u_N}_{L_{txy}^4}^2 = \norm{u_N^2}_{L_{txy}^2} \sim \norm{\hat{u}_{N} \ast \hat{u}_N}_{L_{\tau \xi q}^2} \\ &\geq \norm{\underbrace{(\delta_{q,N} \chi_{\intcc{-1,1}}(\xi) \chi_{\intcc{-1,1}}(\tau-\phi(\xi,q))) \ast (\delta_{q,-N}\chi_{\intcc{-1,1}}(\xi) \chi_{\intcc{-1,1}}(\tau-\phi(\xi,q)))}_{\eqcolon I}}_{L_{\tau \xi q}^2}, \end{align*}
where $\ast$ denotes the convolution with respect to $\tau$, $\xi$ and $q$. For the convolution integral $I$, we now have \begin{align*} I &= \delta_{q,0} \int_{\R} \chi_{\intcc{-1,1}}(\xi_1) \chi_{\intcc{-1,1}}(\xi-\xi_1) \int_{\R} \chi_{\intcc{-1,1}}(\tau_1 - \phi(\xi_1,N)) \\ & \ \ \ \cdot \chi_{\intcc{-1,1}}(\tau-\tau_1 - \phi(\xi-\xi_1,-N)) \mathrm{d} \tau_1 \mathrm{d} \xi_1 \\ & = \delta_{q,0} \int_{\R} \chi_{\intcc{-1,1}}(\xi_1) \chi_{\intcc{-1,1}}(\xi-\xi_1) \int_{\R} \chi_{\intcc{-1,1}}(\tilde{\tau_1}) \\ & \ \ \ \cdot \chi_{\intcc{-1,1}}(\tilde{\tau_1} - (\tau - \phi(\xi_1,N) - \phi(\xi-\xi_1,-N))) \mathrm{d} \tilde{\tau_1} \mathrm{d} \xi_1 \\ & \geq \delta_{q,0} \int_{\R} \chi_{\intcc{-1,1}}(\xi_1) \chi_{\intcc{-1,1}}(\xi-\xi_1) \chi_{\intcc{-1,1}}(\tau - \phi(\xi_1,N) - \phi(\xi-\xi_1,-N)) \mathrm{d} \xi_1 \\ & \geq \delta_{q,0} \chi_{\intcc{-\frac{1}{2},\frac{1}{2}}}(\xi) \int_\R  \chi_{\intcc{-\frac{1}{2},\frac{1}{2}}}(\xi_1) \chi_{\intcc{-1,1}}(\tau - \phi(\xi_1,N) - \phi(\xi-\xi_1,-N)) \mathrm{d} \xi_1, \end{align*} and \begin{align*} \tau - \phi(\xi_1,N)- \phi(\xi-\xi_1,-N) &= \tau - \xi_1^3-\xi_1N^2-(\xi-\xi_1)^3-(\xi-\xi_1)N^2 \\ & = \tau - \xi N^2 \underbrace{- \xi^3 + 3\xi^2\xi_1 - 3 \xi \xi_1^2}_{\in \intcc{-\frac{7}{8},\frac{7}{8}}} \end{align*}
allows us to conclude that \[ \chi_{\intcc{-1,1}}(\tau - \phi(\xi_1,N)- \phi(\xi - \xi_1,-N)) \geq \chi_{\intcc{-\frac{1}{8},\frac{1}{8}}}(\tau - \xi N^2) \] holds true. We thus have \[ I \geq \delta_{q,0} \chi_{\intcc{-\frac{1}{2},\frac{1}{2}}}(\xi) \chi_{\intcc{-\frac{1}{8},\frac{1}{8}}}(\tau - \xi N^2) \] and forming the $L_{\tau \xi q}^2$-norm yields \begin{equation} \label{left} \norm{u_N}_{L_{txy}^4}^2 \gtrsim \brbigg{ \int_{-\frac{1}{2}}^{\frac{1}{2}} \int_{-\frac{1}{8}+ \xi N^2}^{\frac{1}{8} + \xi N^2} 1 \mathrm{d} \tau \mathrm{d} \xi }^\frac{1}{2} = \frac{1}{2} \gtrsim 1. \end{equation}
Now, if \eqref{fail} were true, then combining \eqref{right} and \eqref{left} would lead to \[ 1 \lesssim \norm{u_N}_{L_{txy}^4} \lesssim \norm{u_N}_{X_{s,b}} \lesssim N^s \quad \forall N \in \N, \] which would contradict the assumption $s < 0$. This completes the proof.
\end{proof}

Although, according to Proposition \ref{failprop}, we cannot expect a gain in derivatives in a linear $L^4$-estimate, it is possible - using Lemma \ref{mes} - to establish a parameter-dependent family of bilinear estimates that will allow us to gain up to $\frac{1}{4}-$ of a derivative in certain frequency ranges. These estimates can be understood as bilinear refinements of the linear $L^4$-estimate \eqref{ineq1}.

\begin{prop}

Let $\epsilon > 0$ and $b > \frac{1}{2} $ be arbitrary. Furthermore, let the Fourier projector $P^\alpha$ for $\alpha \in \intcc{0,1}$ be defined by \[ P^\alpha u \coloneqq \Fcal_{xy}^{-1} \chi_{\set{\abs{3\xi^2-q^2} \gtrsim \abs{\xi}^\alpha, \abs{\xi} \gtrsim 1}} \Fcal_{xy}u. \]
Then, the estimate \begin{equation} \label{bilinrefinement} \norm{I_{x}^{\frac{\alpha}{4}}P^\alpha (u v)}_{L_{txy}^2} \lesssim_{\epsilon,b} \norm{u}_{X_{\epsilon,b}} \norm{v}_{X_{\epsilon,b}} \end{equation} holds for all $u,v \in X_{\epsilon,b}$.

\end{prop}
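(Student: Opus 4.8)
The plan is to mirror the proof of Theorem~\ref{L4} almost verbatim, replacing the set $B_{\tau,\xi,q}^{\mathrm{lin}}$ by $B_{\tau,\xi,q}^{\alpha}$ and the measure bound \eqref{lin} by \eqref{bilina}; the extra factor $\abs{\xi}^{\frac{\alpha}{4}}$ produced by $I_x^{\frac{\alpha}{4}}$ is exactly the weight that \eqref{bilina} is designed to absorb. As a preliminary reduction, observe that the cut-off defining $P^\alpha$ and the operator $I_x^{\frac{\alpha}{4}}$ are both invariant under $(\tau,\xi,q)\mapsto(-\tau,-\xi,-q)$, under which $X_{\epsilon,b}$ is invariant as well since $\phi$ is odd; splitting $P^\alpha$ into its parts supported in $\set{\xi>0}$ and $\set{\xi<0}$ and reflecting the second, it suffices to estimate the part of $P^\alpha(uv)$ with output frequency $\xi>0$, where moreover $\xi\gtrsim1$ and $\abs{\xi}^{\frac{\alpha}{4}}\sim\langle\xi\rangle^{\frac{\alpha}{4}}\sim\xi^{\frac{\alpha}{4}}$.

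Next, decomposing $u=\sum_{N_1,L_1}P_{N_1}Q_{L_1}u$ and $v=\sum_{N_2,L_2}P_{N_2}Q_{L_2}v$, it suffices by the triangle inequality to prove the dyadic bilinear estimate
\begin{equation*}
\norm{I_x^{\frac{\alpha}{4}}P^\alpha\brbig{(P_{N_1}Q_{L_1}u)(P_{N_2}Q_{L_2}v)}}_{L_{txy}^2}\lesssim_\epsilon (N_1\lor N_2)^{\frac{\epsilon}{2}}(L_1 L_2)^{\frac12}\norm{P_{N_1}Q_{L_1}u}_{L_{txy}^2}\norm{P_{N_2}Q_{L_2}v}_{L_{txy}^2}
\end{equation*}
and then to sum. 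For the dyadic estimate, Parseval's identity, the Cauchy--Schwarz inequality in the convolution variables and Fubini's theorem — applied exactly as for the term $(I)$ in the proof of Theorem~\ref{L4} — bound the left-hand side by $\brbig{\sup_{(\tau,\xi,q)\in\R\times\R^{>0}\times\Z}\xi^{\frac{\alpha}{4}}\abs{A_{\tau,\xi,q}^\alpha}^{\frac12}}\norm{P_{N_1}Q_{L_1}u}_{L_{txy}^2}\norm{P_{N_2}Q_{L_2}v}_{L_{txy}^2}$, where $A_{\tau,\xi,q}^\alpha$ is the analogue of $A_{\tau,\xi,q}$ in which $\abs{3\xi^2-q^2}>1$ is replaced by $\xi\gtrsim1$, $\abs{3\xi^2-q^2}\gtrsim\xi^\alpha$. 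The triangle inequality in the modulation variables, followed by the same change of variables $\tilde{\xi_1}=\xi_1-\frac{\xi}{2}$, $\tilde{q_1}=q_1-\frac{q}{2}-h(q)$ and the same identity for $\phi(\xi_1,q_1)+\phi(\xi-\xi_1,q-q_1)$ used there, gives $\abs{A_{\tau,\xi,q}^\alpha}^{\frac12}\lesssim(L_1\land L_2)^{\frac12}\abs{B_{\tau,\xi,q}^\alpha}^{\frac12}$ with $B_{\tau,\xi,q}^\alpha$ the set of Lemma~\ref{mes}, now for $c=-\tilde{c}(L_1\lor L_2)+\tau-\frac{\xi}{4}(\xi^2+q^2)$, $K=2\tilde{c}(L_1\lor L_2)\sim L_1\lor L_2$, and with $N_1,N_2$ as above. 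Invoking \eqref{bilina} (with the $\epsilon$ there replaced by $\frac{\epsilon}{2}$) yields $\xi^{\frac{\alpha}{4}}\abs{B_{\tau,\xi,q}^\alpha}^{\frac12}\lesssim_\epsilon(N_1\lor N_2)^{\frac{\epsilon}{2}}K^{\frac12}$, and since $(L_1\land L_2)^{\frac12}(L_1\lor L_2)^{\frac12}=(L_1L_2)^{\frac12}$ this is exactly the claimed dyadic estimate.

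It remains to carry out the dyadic summation. Using $\norm{P_{N_i}Q_{L_i}w}_{L_{txy}^2}\sim N_i^{-\epsilon}L_i^{-b}\norm{P_{N_i}Q_{L_i}w}_{X_{\epsilon,b}}$ and the crude bound $(N_1\lor N_2)^{\frac{\epsilon}{2}}\le N_1^{\frac{\epsilon}{2}}N_2^{\frac{\epsilon}{2}}$, the total dyadic weight is dominated by the product $\brbig{N_1^{-\frac{\epsilon}{2}}L_1^{\frac12-b}}\brbig{N_2^{-\frac{\epsilon}{2}}L_2^{\frac12-b}}$, which factorizes in the two variable pairs; two applications of the Cauchy--Schwarz inequality — using $\sum_N N^{-\epsilon}<\infty$, $\sum_L L^{1-2b}<\infty$ (as $b>\frac12$) and $\sum_{N,L}\norm{P_NQ_Lw}_{X_{\epsilon,b}}^2\sim\norm{w}_{X_{\epsilon,b}}^2$ — then bound the quadruple sum by $\norm{u}_{X_{\epsilon,b}}\norm{v}_{X_{\epsilon,b}}$ and finish the proof.

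I do not expect a genuine obstacle here: essentially all the work resides in Lemma~\ref{mes}(b), and the only new point is that the output weight $I_x^{\frac{\alpha}{4}}=\abs{\xi}^{\frac{\alpha}{4}}$ has to be matched against the $\xi^{\frac{\alpha}{4}}$ already built into \eqref{bilina}, which is legitimate precisely because $\abs{3\xi^2-q^2}\gtrsim\abs{\xi}^\alpha$ and $\abs{\xi}\sim\langle\xi\rangle\gtrsim1$ on the range of $P^\alpha$. The one step worth double-checking is that the passage from $A_{\tau,\xi,q}^\alpha$ to $B_{\tau,\xi,q}^\alpha$ does not secretly rely on the sign constraints $\xi_1>0$, $\xi-\xi_1>0$ available in the proof of Theorem~\ref{L4}; it does not, since $B_{\tau,\xi,q}^\alpha$ imposes no condition analogous to $\abs{\xi_1}<\frac{\xi}{2}$, so the argument goes through with no assumption on the signs of the $\xi$-frequencies of $u$ and $v$.
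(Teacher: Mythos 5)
Your proof is correct and follows essentially the same route as the paper: reduce to $\xi>0$ by oddness of the phase, reduce to the dyadic bilinear estimate \eqref{helpbilinrefinement}, pass via Parseval/Cauchy--Schwarz/Fubini to the measure of $A_{\tau,\xi,q}^\alpha$, bound its $\tau_1$-thickness by $L_1\land L_2$, translate to $B_{\tau,\xi,q}^\alpha$ by the change of variables $\tilde\xi_1=\xi_1-\frac{\xi}{2}$, $\tilde q_1=q_1-\frac{q}{2}-h(q)$, invoke \eqref{bilina}, and sum dyadically. Your closing observation is also exactly the point the paper makes when proving \eqref{bilina} itself: the definition of $B_{\tau,\xi,q}^\alpha$ drops the constraint $\abs{\xi_1}<\frac{\xi}{2}$ (hence the bound $a(q_1),b(q_1)\leq\xi$ is unavailable and is replaced by the weaker $a(q_1),b(q_1)\lesssim K^{1/2}/\xi^{1/2}$ together with $\xi\gtrsim 1$), precisely because in the bilinear setting one cannot assume a sign on the input frequencies $\xi_1,\xi-\xi_1$.
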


\begin{proof}
Since the phase function is odd, we may assume without loss of generality that $\xi > 0$.
Analogous to the argument in the proof of Theorem \ref{L4}, the estimate \eqref{bilinrefinement} to be established, follows from \begin{equation} \begin{aligned}  & \norm{I_{x}^\frac{\alpha}{4}P^\alpha((P_{N_1} Q_{L_1} u)(P_{N_2} Q_{L_2} v))}_{L_{txy}^2} \\ &\lesssim_{\epsilon}  (N_1 \lor N_2)^\frac{\epsilon}{2} (L_1 \lor L_2)^\frac{1}{2} (L_1 \land L_2)^\frac{1}{2} \norm{P_{N_1} Q_{L_1} u}_{L_{txy}^2} \norm{P_{N_2} Q_{L_2} v}_{L_{txy}^2}  \end{aligned} \label{helpbilinrefinement} \end{equation}
by means of dyadic summation. We therefore proceed to the proof of \eqref{helpbilinrefinement}. An application of Parseval's identity, the Cauchy-Schwarz inequality and Fubini's theorem yields \begin{align*} &\norm{I_{x}^\frac{\alpha}{4}P^\alpha((P_{N_1} Q_{L_1} u)(P_{N_2} Q_{L_2} v))}_{L_{txy}^2} \\ &\lesssim \brBigg{\sup_{(\tau,\xi,q) \in \R \times \R^{>0} \times \Z} \xi^\frac{\alpha}{4} \abs{A_{\tau, \xi, q}^\alpha}^\frac{1}{2}} \norm{P_{N_1} Q_{L_1} u}_{L_{txy}^2} \norm{P_{N_2} Q_{L_2} v}_{L_{txy}^2} \end{align*}
with \begin{align*} &A_{\tau,\xi,q}^\alpha \coloneqq  \setbig{ (\tau_1,\xi_1,q_1) \in \R \times \R \times \Z \ \big| \ \xi \gtrsim 1, \ \abs{3\xi^2-q^2} \gtrsim \xi^\alpha, \ \abs{(\xi_1,q_1)} \lesssim N_1, \\ &  \abs{(\xi-\xi_1,q-q_1)} \lesssim N_2, \ \abs{\tau_1-\phi(\xi_1,q_1)} \lesssim L_1, \ \abs{\tau-\tau_1 - \phi(\xi-\xi_1,q-q_1)} \lesssim L_2}. \end{align*}
Now, if we control the extension of $A_{\tau,\xi,q}^\alpha$ in the $\tau_1$-direction by $(L_1 \land L_2)$, and then substitute \[ \tilde{\xi_1} \coloneqq \xi_1-\frac{\xi}{2} \quad \text{and} \quad \tilde{q_1} \coloneqq q_1-\frac{q}{2} - h(q), \] just as in the proof of Theorem \ref{L4}, it follows - taking into account the translation invariance of the Haar measure on $\R \times \Z$ - that \[ \xi^\frac{\alpha}{4} \abs{A_{\tau,\xi,q}^\alpha}^\frac{1}{2} \lesssim (L_1 \land L_2)^\frac{1}{2} \xi^\frac{\alpha}{4} \abs{B_{\tau,\xi,q}^\alpha}^\frac{1}{2}, \] with $B_{\tau,\xi,q}^\alpha = B_{\tau,\xi,q}^\alpha(c,K) = B_{\tau,\xi,q}^\alpha(-\tilde{c}(L_1 \lor L_2) + \tau - \frac{\xi}{4}(\xi^2+q^2), 2 \tilde{c} (L_1 \lor L_2))$ defined exactly as in Lemma \ref{mes}. From an application of \eqref{bilina}, it finally follows that \[ \xi^\frac{\alpha}{4} \abs{A_{\tau, \xi,q}^\alpha}^\frac{1}{2} \lesssim_{\epsilon} (L_1 \land L_2)^\frac{1}{2} (L_1 \lor L_2)^\frac{1}{2} (N_1 \lor N_2)^\frac{\epsilon}{2}, \] and thus the proof of \eqref{helpbilinrefinement} is complete.
\end{proof}

\begin{rem}

Estimate \eqref{bilinrefinement} will likewise be available for later estimates involving duality: Taking Remark \ref{remark mp} (ii) into account, an argument analogous to the proof of Proposition \ref{MP prop} yields \begin{equation} \label{trivial refinement} \norm{I_{x}^\frac{\alpha}{4}P^\alpha(u  v)}_{L_{txy}^2} \lesssim \norm{J_{x}^{\frac{\alpha}{4}}u}_{X_{1+,\frac{1}{4}+}} \norm{J_{x}^{\frac{\alpha}{4}}v}_{X_{0,\frac{1}{4}+}}, \end{equation} which, after interpolation with \eqref{bilinrefinement}, leads to \begin{equation} \label{dual refinement} \norm{I_{x}^\frac{\alpha}{4}P^\alpha(u  v)}_{L_{txy}^2} \lesssim \norm{u}_{X_{0+,\frac{1}{2}-}} \norm{v}_{X_{0+,\frac{1}{2}-}}.  \end{equation}

\end{rem}

We now have all the necessary ingredients to show our well-posedness results in the cases $k=2$ (mZK) and $k\geq 4$. For the case $k=3$, we will conclude this section by proving $L^5$-estimates that are specifically tailored to the quartic nonlinearity.

\begin{cor}

Let $T>0$, $0< \epsilon \ll 1$ and $b>\frac{1}{2}$. Furthermore, if $u$, $v$, $w$ are time-localized\footnote{Time localization is not actually required for the function $v$.} functions with $J_{x}^\frac{1}{5}u \in X_{\epsilon,b}$, $ J_{y}^\frac{1}{5}v \in X_{\epsilon,b}$, and $w \in X_{\frac{2}{15}+\epsilon,b}$, then the following three estimates hold: \begin{equation} \label{Schrödinger L5} \norm{u}_{L_{Txy}^5} \lesssim_{T,\epsilon,b} \norm{J_{x}^\frac{1}{5}u}_{X_{\epsilon,b}}, \end{equation}
\begin{equation} \label{Airy L5} \norm{I_{x}^\frac{1}{10}v}_{L_{txy}^5} \lesssim_{b} \norm{J_{y}^\frac{1}{5}v}_{X_{\epsilon,b}}, \end{equation}
and \begin{equation} \label{optimized L5} \norm{w}_{L_{Txy}^5} \lesssim_{T,\epsilon,b} \norm{w}_{X_{\frac{2}{15}+\epsilon,b}}. \end{equation}

\end{cor}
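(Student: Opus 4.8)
The plan is to reach the $L^5$‑level by complex interpolation against the almost sharp linear $L^4$‑estimate \eqref{ineq1} of Theorem~\ref{L4}: it is precisely the fact that \eqref{ineq1} costs only $\epsilon$ derivatives (rather than $\frac14$, as in \eqref{Xsb Schrödinger L4}) that turns the naive $L^5$‑exponent $\frac3{10}$ into $\frac15$ below, and, after a frequency split as in Corollary~\ref{optimized Lp cor}, feeds a further gain down to $\frac2{15}$.

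For \eqref{Schrödinger L5} I would pair \eqref{ineq1} — which, since $\norm{u}_{L_{Txy}^4}\le\norm{u}_{L_{txy}^4}$, yields $\norm{u}_{L_{Txy}^4}\lesssim_{\epsilon,b}\norm{u}_{X_{\epsilon,b}}$ — with \eqref{Xsb Schrödinger L6}, i.e.\ $\norm{u}_{L_{Txy}^6}\lesssim_{T,\epsilon,b}\norm{J_x^{1/3}J_y^{\epsilon}u}_{X_{0,b}}$. Both are boundedness statements for the inclusion of a weighted $L^2$‑type Bourgain space into $L^p_{Txy}$; after the (unitary) space‑time Fourier transform the source weights are $\langle(\xi,q)\rangle^{\epsilon}\langle\tau-\phi(\xi,q)\rangle^{b}$ and $\langle\xi\rangle^{1/3}\langle q\rangle^{\epsilon}\langle\tau-\phi(\xi,q)\rangle^{b}$, so complex interpolation of weighted $L^2$‑spaces together with $[L^4_{Txy},L^6_{Txy}]_\theta=L^{p_\theta}_{Txy}$ yields, for the value $\theta=\frac35$ solving $\frac15=\frac{1-\theta}4+\frac\theta6$, the bound $\norm{u}_{L_{Txy}^5}\lesssim\norm{J_x^{\theta/3}u}_{X_{\epsilon',b}}=\norm{J_x^{1/5}u}_{X_{\epsilon',b}}$, the residual $O(\epsilon)$ of $x$‑, $y$‑ and full‑regularity being absorbed into the $\langle(\xi,q)\rangle^{\epsilon'}$‑weight after renaming $\epsilon$.

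The proof of \eqref{Airy L5} runs the same way, except that \eqref{Airy L6 Xsb} carries the multiplier $I_x^{1/6}$ on the left and the homogeneous weight $I_y^{1/3}$ on the right, so the two estimates are not for the same operator; I would therefore use Stein interpolation for the analytic family of Fourier multipliers $A_z\coloneqq J^{-(1-z)\epsilon}I_x^{z/6}I_y^{-z/3}P_{\{q\ne0\}}$, $0\le\Re z\le1$. On $\Re z=0$ one has $\norm{A_{iy}v}_{L_{txy}^4}\lesssim\norm{v}_{X_{0,b}}$ by \eqref{ineq1}, on $\Re z=1$ one has $\norm{A_{1+iy}v}_{L_{txy}^6}\lesssim\norm{v}_{X_{0,b}}$ by \eqref{Airy L6 Xsb}, both uniformly in $y\in\R$; interpolating at $\theta=\frac35$ and substituting $v=J^{2\epsilon/5}I_y^{1/5}w$ gives $\norm{I_x^{1/10}w}_{L_{txy}^5}\lesssim\norm{I_y^{1/5}w}_{X_{\epsilon,b}}$ for $w$ with $\hat w$ supported in $\{q\ne0\}$, the mean‑in‑$y$ mode carrying no $I_y$‑weight and being treated directly.

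Finally, for \eqref{optimized L5} I would split $w$ according to $\{\abs{\xi}\lesssim\abs{q}^{2/3}\}$ versus $\{\abs{\xi}\gtrsim\abs{q}^{2/3}\}$ (with the usual $\epsilon$‑room in the exponent) and peel off the $q=0$ mode, which is handled directly by the one‑dimensional Airy $L^5_{tx}$‑bound in Bourgain‑space form — obtained by interpolating the Airy $L^6$‑estimate with \eqref{trivial L2}, and which in fact gains derivatives, so that it is dominated by $\norm{w}_{X_{2/15+\epsilon,b}}$. On the first region \eqref{Schrödinger L5} combined with $\langle\xi\rangle^{1/5}\lesssim\langle q\rangle^{2/15}$ gives a bound by $\norm{w}_{X_{2/15+\epsilon,b}}$; on the second, rewriting \eqref{Airy L5} as $\norm{P^{\mathrm{Airy}}w}_{L^5_{txy}}\lesssim\norm{I_x^{-1/10}I_y^{1/5}P^{\mathrm{Airy}}w}_{X_{\epsilon,b}}$ and using $\abs{\xi}^{-1/10}\abs{q}^{1/5}\lesssim\abs{q}^{2/15}$ there gives the same bound. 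The two exponents coincide because $\frac15\cdot\frac23=\frac15-\frac1{10}\cdot\frac23=\frac2{15}$, which is also what pins down $\frac23$ as the balancing cutoff. The substantive point of the whole argument is the choice to interpolate against \eqref{ineq1} rather than a cruder $L^4$‑bound; the remaining difficulty is purely bookkeeping — tracking homogeneous ($I$) versus inhomogeneous ($J$) weights, and isolating the degenerate frequencies $q=0$ (and, to a lesser extent, $\xi=0$), where the $I_y$‑ (resp.\ $I_x$‑)weights vanish and one must fall back on the corresponding one‑dimensional estimates.
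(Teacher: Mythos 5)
Your proof follows the same route as the paper: interpolate \eqref{ineq1} with \eqref{Xsb Schrödinger L6} (resp.\ \eqref{Airy L6 Xsb}) at $\theta=\tfrac35$ to obtain \eqref{Schrödinger L5} (resp.\ \eqref{Airy L5}), and then split the Fourier support at $\abs{\xi}\sim\abs{q}^{2/3}$, applying \eqref{Schrödinger L5} and \eqref{Airy L5} on the respective regions to get \eqref{optimized L5}. Your version is in fact more scrupulous than the paper's one-line proof --- the Stein-interpolation argument for \eqref{Airy L5}, needed to reconcile the $I_x^{1/6}$-multiplier on the left with the homogeneous $I_y$-weight on the right and to isolate the degenerate $q=0$ mode, is exactly the technical detail the paper elides with the word \emph{interpolating}.
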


\begin{proof}

By interpolating the newly obtained $L^4$-estimate \eqref{ineq1} with \eqref{Xsb Schrödinger L6}, we obtain \eqref{Schrödinger L5}, while \eqref{Airy L5} follows from interpolating \eqref{ineq1} and \eqref{Airy L6 Xsb}. By decomposing on the Fourier side into the regions $A \coloneqq \set{(\xi,q) \ | \ \abs{\xi} \leq \langle q \rangle^\frac{2}{3}}$ and $A^c \coloneqq \set{(\xi,q) \ | \ \abs{\xi} > \langle q \rangle^\frac{2}{3}}$, estimate \eqref{optimized L5} follows from using estimate \eqref{Schrödinger L5} on region $A$ and estimate \eqref{Airy L5} on region $A^c$.
\end{proof}

\section{Local well-posedness theory for GZK}

\subsection{General LWP for gZK}

It is commonly known that the well-posedness results stated in Theorem \ref{general LWP} follow from a standard fixed-point argument, provided that a certain multilinear estimate can be established. We state the desired estimate in the following

\begin{prop} \label{prop general lwp}
Let $k \in \N^{\geq 2}$, and let $s_{0}(k)$ be as defined in Theorem \ref{general LWP}. Then, for every $s > s_{0}(k)$, there exists a sufficiently small number $\epsilon = \epsilon(s) > 0$ such that the following estimate holds for all time-localized functions $u_i \in X_{s,\frac{1}{2}+\epsilon}$:
\begin{equation} \label{multlin general lwp} \norm{\partial_{x} \brBigg{\prod_{i=1}^{k+1} u_i}}_{X_{s,-\frac{1}{2}+2\epsilon}} \lesssim \prod_{i=1}^{k+1} \norm{u_i}_{X_{s,\frac{1}{2}+\epsilon}}. \end{equation}

\end{prop}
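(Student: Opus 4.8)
The plan is to reduce the multilinear estimate \eqref{multlin general lwp} by duality to a pointwise bound on a convolution-type integral, and then to control that integral by distributing the available Strichartz norms among the $k+1$ factors. Concretely, writing $\norm{\partial_x(\prod u_i)}_{X_{s,-1/2+2\epsilon}}$ in its dual form against a test function $w$ with $\norm{w}_{X_{-s,1/2-2\epsilon}}\leq 1$, and using that $\abs{\xi}\lesssim\langle(\xi,q)\rangle$, the task becomes estimating $\int \langle(\xi,q)\rangle^{s}\,\widehat{\overline w}\,\prod_i\widehat{u_i}$ over the convolution hyperplane. The first step is the usual frequency bookkeeping: by a Littlewood–Paley decomposition of each factor and of $w$, one largest frequency $N_{\max}$ must be comparable to the second-largest, so the derivative $\langle(\xi,q)\rangle^{s}$ on the output can be absorbed into that factor (or redistributed among the two highest ones), leaving the remaining $k-1$ factors effectively at $L^2$-type regularity and a small derivative surplus $\epsilon$ to spare. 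After that reduction one puts the two high-frequency factors (including possibly $w$) into the $L^4$-estimate \eqref{ineq1} — more precisely its near-endpoint variant \eqref{ineq1d} in the form $\norm{\cdot}_{L^{4-}_{txy}}\lesssim\norm{\cdot}_{X_{0+,1/2-}}$ — and the other factors into $L^p_{txy}$ via the optimized estimate \eqref{optimized Lp} of Corollary \ref{optimized Lp cor}, with Hölder's inequality $\frac1{4-}+\frac1{4-}+\sum\frac1{p_j}+\frac12=1$ fixing the exponents.

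The second step is the exponent count that produces the thresholds $s_0(k)$. For $k=2$ the product has four factors ($u_1,u_2,u_3$ and the dual $w$): put two of them (the two highest frequencies) in $L^{4-}$ and the other two in $L^{4}$ via \eqref{ineq1}, since $\frac14+\frac14+\frac14+\frac14=1$. Each $L^4$-factor costs only $0+$ derivatives by Theorem \ref{L4}, so the only derivative cost is the $s$ needed on the output absorbed by one high factor; a careful split of $\langle(\xi,q)\rangle^{s}$ between the two comparably-sized high frequencies, using $s>1/2$ is exactly what is needed so that each high factor carries at most $s/1 < \ldots$ — here the arithmetic gives the bound $s_0(2)=\frac12$. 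For $k=3$ one has five factors; using $L^5$ on all five is the natural symmetric choice, and the $L^5$-estimate $\norm{\cdot}_{L^5_{Txy}}\lesssim\norm{\cdot}_{X_{2/15+,1/2+}}$ from \eqref{optimized L5} combined with the redistribution of $s$ over the two top frequencies yields the threshold $\frac{8}{15}$ — this is exactly why the paper singled out the quartic case and proved the tailored $L^5$-estimates \eqref{Schrödinger L5}, \eqref{Airy L5}, \eqref{optimized L5}. For $k\geq4$ one uses $L^{4-}$ on the two highest factors and $L^{\infty-}$ (i.e. a very large but finite Lebesgue exponent, obtained by interpolating \eqref{optimized Lp} up to $p\to\infty$ at the cost of $\frac13-$ derivatives per factor) on the remaining $k-1$ factors; Hölder then forces $\frac1{4-}+\frac1{4-}+\frac{k-1}{p}+\frac12=1$ with $p$ large, and the derivative budget becomes roughly $s\cdot 1$ on the top absorbed against a surplus of order $(k-1)\cdot(s-\frac13)$ coming from the low factors sitting at regularity $s$ but only needing $\frac13-$, which balances out to $s_0(k)=1-\frac{16}{9k}$. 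In each case one must also handle the low-modulation/high-modulation dichotomy: if one of the factors has modulation $L\gtrsim N_{\max}^{3}$ (or any large power), then $\langle\tau-\phi\rangle^{b}$ with $b>\frac12$ on that factor beats the derivative loss by brute force via the trivial $L^2$-estimate \eqref{trivial L2} and Bernstein, so the genuinely delicate regime is when all modulations are small.

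The main obstacle will be the endpoint-type losses. Both the $L^4$-estimate of Theorem \ref{L4} and the optimized $L^p$-estimates require $b>\frac12$ strictly, whereas the output sits in $X_{s,-1/2+2\epsilon}$ and hence, after duality, $w$ only lies in $X_{-s,1/2-2\epsilon}$ with $b$ slightly \emph{below} $\frac12$; this is precisely the role of the remark after Theorem \ref{L4}, namely that interpolating \eqref{ineq1} with the trivial $L^2$-bound gives \eqref{ineq1d}, so that one still has an $L^{4-}$-estimate valid for $b=\frac12-$, at the negligible cost of replacing the $\epsilon$-derivative loss by a slightly larger but still arbitrarily small loss. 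One must therefore run the whole argument with $L^{4-}$ and $L^{p-}$ in place of $L^4$ and $L^p$ throughout, and verify that the finitely many $\epsilon$'s (the $\epsilon$ in $X_{s,1/2+\epsilon}$, the $\epsilon$ in $X_{s,-1/2+2\epsilon}$, the derivative losses in the Strichartz estimates, and the Sobolev-loss in the $y$-variable hidden in \eqref{optimized Lp}) can all be chosen consistently small so that the strict inequality $s(k)>s_0(k)$ leaves room. The bookkeeping — in particular keeping track of which factor absorbs the output derivative when two or more high frequencies are comparable, and summing the resulting geometric series in the dyadic parameters $N_i, L_i$ — is routine but lengthy, and is where the bulk of the actual proof will go; I expect the case $k=3$ to be the most delicate because the symmetric $L^5$-choice leaves the least slack.
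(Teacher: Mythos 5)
Your proposal correctly identifies the skeleton of the argument — duality, a dyadic frequency decomposition, and a Hölder splitting into $L^p$ Strichartz estimates — but it misses the two mechanisms that actually make the stated thresholds achievable, and consequently your exponent count does not land on $s_0(k)$.

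First, the paper does not treat all frequency configurations by a single Hölder/$L^p$ scheme. Whenever the highest and lowest frequencies are widely separated (cases (i.2), (ii.2), (iii.2) of the paper's proof), the key tool is the bilinear Molinet–Pilod estimate \eqref{OG bilin} (and its dual form \eqref{MP dual}), which wins up to $\frac12-$ derivatives via the multiplier $\abs{\abs{(\xi_1,q_1)}^2-\abs{(\xi_2,q_2)}^2}^{1/2}$; this does not appear anywhere in your proposal, and without it those subcases cannot be closed. Second, in the genuinely comparable-frequency regime your counting does not reproduce $s_0(k)$. For $k=3$, putting all five factors (four $u_i$ plus the dual $f$) into $L^{5\pm}$ via \eqref{optimized L5} — which costs $\frac{2}{15}+$ derivatives on each, including on $f$, which has no budget — gives after Hölder the requirement $s \geq \frac{5}{9}$, not $\frac{8}{15}$. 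The paper instead observes that $\abs{\xi}\lesssim\abs{\xi_{\max}}=\abs{\xi_1}$, moves a half-derivative in $x$ onto the top factor, and uses the Airy $L^5$-estimate \eqref{Airy L5} (which carries the gain $I_x^{1/10}$) together with its dual on the output; this recovers the extra $\frac{1}{45}$ and gives $\frac{8}{15}$. For $k\geq 4$ your plan — two top factors in $L^{4-}$, the rest in $L^{\infty-}$ — breaks down outright: Hölder with $\frac1{4-}+\frac1{4-}+\frac12$ already saturates, and the remaining $k-1$ factors in $L^\infty$ cost roughly $1+$ derivatives each by \eqref{trivial Linfty}, which exceeds the available budget $s<1$. (Note also that \eqref{optimized Lp} is only stated for $p\in[2,6]$, so interpolating it "up to $p\to\infty$ at cost $\frac13-$" is not a valid move.) The paper's case (iii.3) instead places the five comparably-high factors plus the dual in $L^{6\pm}$, applying the Airy $L^6$-estimate \eqref{Airy L6 Xsb} with $I_x^{1/6}$ gains both on the output (via duality) and on the $\xi_{\max}$-factor, and only relegates the residual $k-4$ low factors to Sobolev/$L^\infty$; this is what produces $1-\frac{16}{9k}$.

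A minor point: you suggest a high-modulation/low-modulation dichotomy with a brute-force $L^2$ argument when some $L\gtrsim N_{\max}^3$, but the proof of Proposition \ref{prop general lwp} in the paper does not use the resonance function at all — that mechanism is reserved for the sharper mZK estimate in Proposition \ref{mZK prop}.
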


In this paper, we restrict our attention to the proof of \eqref{multlin general lwp} and refer the reader to, e.g., \cite{Bourgain1993, Gruenrock2004, Saut2001} for details concerning the fixed-point method.

\begin{proof}[Proof of Proposition \ref{prop general lwp}]
We divide the proof into three parts and start with some preparatory remarks:
By duality and an application of Parseval's identity, estimate \eqref{multlin general lwp} is equivalent to showing \begin{align*} I_{f} &\coloneqq \absBigg{\int_{\R^2} \sum_{q \in \Z} \int_{\R^{2k}} \sum_{\substack{q_1,...,q_k \in \Z \\ \ast}} \xi \langle (\xi,q) \rangle^{s} \brBigg{\prod_{i=1}^{k+1} \hat{u}_i(\tau_i,\xi_i,q_i)} \overline{\hat{f}}(\tau,\xi,q) \mathrm{d} \tilde{\tau} \mathrm{d} \tilde{\xi} \mathrm{d}(\tau,\xi)} \\ & \lesssim  \norm{f}_{X_{0,\frac{1}{2}-2\epsilon}} \prod_{i=1}^{k+1} \norm{u_i}_{X_{s,\frac{1}{2}+\epsilon}} \end{align*}
for all $f \in X_{0,\frac{1}{2}-2\epsilon}$ with $\norm{f}_{X_{0,\frac{1}{2}-2\epsilon}} \leq 1$. Here, the symbol $\ast$ denotes the convolution constraint $(\tau,\xi,q) = (\tau_1+...+\tau_{k+1},\xi_1+...+\xi_{k+1},q_{1}+...+q_{k+1})$, and we adopt the notation $\mathrm{d} \tilde{\tau} \mathrm{d} \tilde{\xi} \coloneqq \mathrm{d} \tau_1...\mathrm{d} \tau_{k} \mathrm{d} \xi_1...\mathrm{d} \xi_k$ for brevity. Furthermore, since the norms involved only depend on $\abs{\hat{u}_i}$ and $\abs{\hat{f}}$, we may, without loss of generality, assume that all $\hat{u}_i$ and $\hat{f} $ are nonnegative for the remainder of the proof. Finally, if $(\xi_i,q_i)$ denotes the frequency variable associated with the factor $u_i$, then by symmetry, it suffices to restrict the following analysis to the case where the frequency distribution satisfies $\abs{(\xi_1,q_1)} \geq \abs{(\xi_2,q_2)} \geq ... \geq \abs{(\xi_{k+1},q_{k+1})}$. \\ \\
(i) \underline{$k=2$}: \\
(i.1) \underline{$\abs{(\xi_1,q_1)} \lesssim 1$}: \\ In this case, we have $\abs{(\xi_i,q_i)} \lesssim 1$ and $\abs{(\xi,q)} \lesssim 1$, for all $i \in \set{1,2,3}$, so that Sobolev's embedding theorem applies without any derivative loss. Consequently, for any $0<\epsilon \ll 1$ and $s \geq 0$, we obtain \begin{align*} \norm{\partial_{x}(u_1u_2u_3)}_{X_{s,-\frac{1}{2}+2\epsilon}} &\lesssim \norm{u_1u_2u_3}_{L_{txy}^2} \\ & \leq \norm{u_1}_{L_{txy}^2} \norm{u_2}_{L_{t}^\infty L_{xy}^\infty} \norm{u_2}_{L_{t}^\infty L_{xy}^\infty} \\ & \lesssim \norm{u_1}_{L_{txy}^2} \norm{u_2}_{L_{t}^\infty L_{xy}^2} \norm{u_2}_{L_{t}^\infty L_{xy}^2} \\ & \lesssim \norm{u_1}_{X_{0,\frac{1}{2}+\epsilon}} \norm{u_2}_{X_{0,\frac{1}{2}+\epsilon}} \norm{u_3}_{X_{0, \frac{1}{2}+\epsilon}} \\ & \lesssim \prod_{i=1}^{3} \norm{u_i}_{X_{s,\frac{1}{2}+\epsilon}},  \end{align*} where we have used the fact that \begin{equation} \label{Linfty to L2} 
\norm{u}_{L_{t}^\infty H_{xy}^s} \lesssim_{b} \norm{u}_{X_{s,b}}, \qquad b>\frac{1}{2}. \end{equation} (i.2) \underline{$\abs{(\xi_1,q_1)} \gg 1$, $\abs{(\xi_1,q_1)} \gg \abs{(\xi_3,q_3)}$}: \\
(i.2.1) \underline{$ \abs{(\xi,q)} \gg \abs{(\xi_2,q_2)} $}: \\ In this situation, we have in particular $1 \ll \abs{(\xi_1,q_1)} \sim \abs{(\xi,q)} \sim \langle (\xi,q) \rangle$, and we obtain \begin{align*} \abs{\xi} \langle (\xi,q) \rangle^s &\lesssim \abs{\abs{(\xi_1,q_1)}^2-\abs{(\xi_3,q_3)}^2}^\frac{1}{2} \langle (\xi_1,q_1) \rangle^s \langle (\xi_3,q_3) \rangle^{-\frac{1}{2}+} \\ & \ \ \ \cdot \abs{\abs{(\xi,q)}^2-\abs{(\xi_2,q_2)}^2}^\frac{1}{2} \langle (\xi,q) \rangle^{0-} \langle (\xi_2,q_2) \rangle^{-\frac{1}{2}+}.  \end{align*}
Undoing Plancherel and subsequently applying Hölder's inequality leads to
\[ I_{f} \lesssim \norm{MP(J^su_1, J^{-\frac{1}{2}+} u_3)}_{L_{txy}^2} \norm{MP(J^{0-}f,J^{-\frac{1}{2}+}\tilde{u}_2)}_{L_{txy}^2}\footnote{We denote $\tilde{u}_2(t,x,y) \coloneqq u_2(-t,-x,-y).$}, \] and applying \eqref{OG bilin} and \eqref{MP dual} finally yields \[ ... \lesssim \norm{u_1}_{X_{s,\frac{1}{2}+\epsilon}} \norm{u_3}_{X_{0+,\frac{1}{2}+\epsilon}} \norm{f}_{X_{0+,\frac{1}{2}-2\epsilon}} \norm{u_2}_{X_{0+,\frac{1}{2}+\epsilon}}, \] which shows the desired estimate for any $s > 0$, provided $\epsilon > 0$ is sufficiently small. \\
(i.2.2) \underline{$ \abs{(\xi,q)} \lesssim \abs{(\xi_2,q_2)} $}: \\ For this frequency configuration, we have \[ \abs{\xi} \langle (\xi,q) \rangle^s \lesssim \abs{\abs{(\xi_1,q_1)}^2-\abs{(\xi_3,q_3)}^2}^\frac{1}{2} \langle (\xi_1,q_1) \rangle^s \langle (\xi_3,q_3) \rangle^{-\frac{1}{4}+} \langle (\xi,q) \rangle^{0-} \langle (\xi_2,q_2) \rangle^{\frac{1}{4}+}, \] and undoing Plancherel, followed by Hölder's inequality, yields \begin{align*} I_{f} &\lesssim \norm{MP(J^{s}u_1,J^{-\frac{1}{4}+}u_3)}_{L_{txy}^2} \norm{J^{0-}f J^{\frac{1}{4}+}\tilde{u}_2}_{L_{txy}^2} \\ & \leq \norm{MP(J^{s}u_1,J^{-\frac{1}{4}+}u_3)}_{L_{txy}^2} \norm{J^{0-}f}_{L_{txy}^{4-}} \norm{J^{\frac{1}{4}+}\tilde{u}_2}_{L_{Txy}^{4+}}. \end{align*}
For the first factor, we can once again rely on \eqref{OG bilin}, while for estimating the second factor, we make use of \eqref{ineq1d}. The third factor can be controlled using the estimate in \eqref{ineq1}, for example by interpolating it with \eqref{optimized L6}. We obtain \[ ... \lesssim \norm{u_1}_{X_{s,\frac{1}{2}+\epsilon}} \norm{u_3}_{X_{\frac{1}{4}+,\frac{1}{2}+\epsilon}} \norm{f}_{X_{0,\frac{1}{2}-2\epsilon}} \norm{u_2}_{X_{\frac{1}{4}+,\frac{1}{2}+\epsilon}}, \] and this yields the desired estimate for any $s > \frac{1}{4}$, with $\epsilon > 0$ taken small enough. \\
(i.3) \underline{$\abs{(\xi_1,q_1)} \gg 1, \abs{(\xi_1,q_1)} \sim \abs{(\xi_2,q_2)} \sim \abs{(\xi_3,q_3)}$}: \\ In this case, we can freely move derivatives around and obtain \[ \abs{\xi} \langle (\xi,q) \rangle^s \lesssim \langle (\xi,q) \rangle^{0-} \prod_{i=1}^{3} \langle (\xi_i,q_i) \rangle^{\frac{s}{3}+\frac{1}{3}+}. \]
Taking this into account, and then applying the dual version of \eqref{ineq1d}, followed by Hölder's inequality, leads to \begin{align*} \norm{\partial_{x} (u_1u_2u_3)}_{X_{s,-\frac{1}{2}+2\epsilon}} &\lesssim \norm{J^{0-}(J^{\frac{s}{3}+\frac{1}{3}+}u_1J^{\frac{s}{3}+\frac{1}{3}+}u_2J^{\frac{s}{3}+\frac{1}{3}+}u_3)}_{X_{0,-\frac{1}{2}+2\epsilon}} \\ & \lesssim \norm{J^{\frac{s}{3}+\frac{1}{3}+}u_1J^{\frac{s}{3}+\frac{1}{3}+}u_2J^{\frac{s}{3}+\frac{1}{3}+}u_3}_{L_{txy}^{\frac{4}{3}+}} \\ & \leq \norm{J^{\frac{s}{3}+\frac{1}{3}+}u_1}_{L_{Txy}^{4+}}  \norm{J^{\frac{s}{3}+\frac{1}{3}+}u_2}_{L_{Txy}^{4+}}  \norm{J^{\frac{s}{3}+\frac{1}{3}+}u_3}_{L_{Txy}^{4+}}. \end{align*}
A triple application of \eqref{ineq1}, suitably interpolated with \eqref{optimized L6}, ultimately yields \begin{align*} ... &\lesssim \norm{u_1}_{X_{\frac{s}{3}+\frac{1}{3}+,\frac{1}{2}+\epsilon}} \norm{u_2}_{X_{\frac{s}{3}+\frac{1}{3}+,\frac{1}{2}+\epsilon}} \norm{u_3}_{X_{\frac{s}{3}+\frac{1}{3}+,\frac{1}{2}+\epsilon}} \\ & \lesssim \norm{u_1}_{X_{s,\frac{1}{2}+\epsilon}} \norm{u_2}_{X_{s,\frac{1}{2}+\epsilon}} \norm{u_3}_{X_{s,\frac{1}{2}+\epsilon}}, \end{align*}
with the last inequality requiring \[ \frac{s}{3}+\frac{1}{3}+ \leq s \Leftrightarrow \frac{1}{2}+ \leq s. \]
For $\epsilon > 0$ chosen sufficiently small, the proof of \eqref{multlin general lwp} is thereby also completed in this case for all $s > \frac{1}{2} = s_0(2)$. Putting together all intermediate results finishes the proof in the case $k=2$. \\ \\
(ii) \underline{$k=3$}: \\
(ii.1) \underline{$\abs{(\xi_1,q_1)} \lesssim 1$}: \\ We proceed exactly as in the case (i.1) and use the Sobolev embedding theorem without loss of derivatives, followed by \eqref{Linfty to L2}. This yields \begin{align*} \norm{\partial_x (u_1u_2u_3u_4)}_{X_{s,-\frac{1}{2}+2\epsilon}} & \lesssim \norm{u_1u_2u_3u_4}_{L_{txy}^2} \\ & \leq \norm{u_1}_{L_{txy}^2} \norm{u_2}_{L_{t}^\infty L_{xy}^\infty} \norm{u_3}_{L_{t}^\infty L_{xy}^\infty} \norm{u_4}_{L_{t}^\infty L_{xy}^\infty} \\ & \lesssim \norm{u_1}_{L_{txy}^2} \norm{u_2}_{L_{t}^\infty L_{xy}^2} \norm{u_3}_{L_{t}^\infty L_{xy}^2} \norm{u_4}_{L_{t}^\infty L_{xy}^2} \\ & \lesssim \norm{u_1}_{X_{0,\frac{1}{2}+\epsilon}} \norm{u_2}_{X_{0,\frac{1}{2}+\epsilon}}\norm{u_3}_{X_{0,\frac{1}{2}+\epsilon}}\norm{u_4}_{X_{0,\frac{1}{2}+\epsilon}} \\ & \lesssim \prod_{i=1}^{4} \norm{u_i}_{X_{s,\frac{1}{2}+\epsilon}} \end{align*} for every $0 < \epsilon \ll 1$ and $s \geq 0$. \\
(ii.2) \underline{$\abs{(\xi_1,q_1)} \gg 1$, $\abs{(\xi_1,q_1)} \gg \abs{(\xi_4,q_4)}$}: \\
(ii.2.1) \underline{$\abs{(\xi,q)} \gg \abs{(\xi_2,q_2)}$}: \\ Taking $1 \ll \abs{(\xi_1,q_1)} \sim \abs{(\xi,q)} \sim \langle (\xi,q) \rangle$ into account, it follows under the given assumptions that \begin{align*} \abs{\xi} \langle (\xi,q) \rangle^s &\lesssim \abs{\abs{(\xi_1,q_1)}^2- \abs{(\xi_4,q_4)}^2}^{\frac{1}{2}} \langle (\xi_1,q_1) \rangle^s \langle (\xi_4,q_4) \rangle^{-\frac{1}{6}+} \\ & \ \ \ \cdot \abs{\abs{(\xi,q)}^2- \abs{(\xi_2,q_2)}^2}^{\frac{1}{2}} \langle (\xi,q) \rangle^{0-} \langle (\xi_2,q_2) \rangle^{-\frac{1}{6}+} \langle (\xi_3,q_3) \rangle^{-\frac{2}{3}+}.  \end{align*}
After undoing Plancherel and applying Hölder's inequality, we thus obtain \[ I_f \lesssim \norm{MP(J^su_1, J^{-\frac{1}{6}+}u_4)}_{L_{txy}^2} \norm{MP(J^{0-}f,J^{-\frac{1}{6}+}\tilde{u}_2)}_{L_{txy}^2} \norm{J^{-\frac{2}{3}+}\tilde{u}_3}_{L_{t}^\infty L_{xy}^\infty}, \] and for the first two factors we use \eqref{OG bilin} and \eqref{MP dual}, respectively, while the third factor can be treated using the Sobolev embedding theorem, followed by \eqref{Linfty to L2}. 
It further follows that \[ ... \lesssim \norm{u_1}_{X_{s,\frac{1}{2}+\epsilon}} \norm{u_4}_{X_{\frac{1}{3}+,\frac{1}{2}+\epsilon}} \norm{f}_{X_{0,\frac{1}{2}-2\epsilon}} \norm{u_2}_{X_{\frac{1}{3}+,\frac{1}{2}+\epsilon}} \norm{u_3}_{X_{\frac{1}{3}+,\frac{1}{2}+\epsilon}}, \] and if we choose $\epsilon > 0$ sufficiently small, we obtain the desired estimate for every $s> \frac{1}{3}$. \\
(ii.2.2) \underline{$\abs{(\xi,q)} \lesssim \abs{(\xi_2,q_2)}$}: \\ This frequency distribution allows us to deduce \begin{align*} \abs{\xi} \langle (\xi,q) \rangle^s &\lesssim \abs{\abs{(\xi_1,q_1)}^2- \abs{(\xi_4,q_4)}^2}^{\frac{1}{2}} \langle (\xi_1,q_1) \rangle^s \langle (\xi_4,q_4) \rangle^{-\frac{1}{18}} \\ & \ \ \ \cdot \langle (\xi,q) \rangle^{-\frac{2}{9}-} \langle (\xi_2,q_2) \rangle^{\frac{2}{9}+} \langle (\xi_3,q_3) \rangle^{\frac{1}{18}}, \end{align*} and after an application of Hölder's inequality in the physical space, we thus obtain \begin{align*} I_f &\lesssim \norm{MP(J^su_1,J^{-\frac{1}{18}}u_4)}_{L_{txy}^2} \norm{J^{-\frac{2}{9}-}f J^{\frac{2}{9}+}\tilde{u}_2 J^{\frac{1}{18}}\tilde{u}_3}_{L_{txy}^2} \\ & \leq \norm{MP(J^su_1,J^{-\frac{1}{18}}u_4)}_{L_{txy}^2} \norm{J^{-\frac{2}{9}-}f}_{L_{Txy}^{6-}} \norm{J^{\frac{2}{9}+}\tilde{u}_2}_{L_{Txy}^{6+}} \norm{J^{\frac{1}{18}}\tilde{u}_3}_{L_{Txy}^{6+}}. \end{align*}
For the first factor we can once again use the bilinear estimate \eqref{OG bilin}, and for the second, we apply the optimized $L^6$-estimate \eqref{optimized L6 dual}. The remaining two factors are treated using \eqref{optimized L6}, namely after interpolation with the trivial estimate \begin{equation} \label{trivial Linfty} \norm{u}_{L_{txy}^\infty} \lesssim \norm{u}_{X_{1+,\frac{1}{2}+}}. \end{equation}
These estimates lead us to \[ ... \lesssim \norm{u_1}_{X_{s,\frac{1}{2}+\epsilon}} \norm{u_4}_{X_{\frac{4}{9}+,\frac{1}{2}+\epsilon}} \norm{f}_{X_{0,\frac{1}{2}-2\epsilon}} \norm{u_2}_{X_{\frac{4}{9}+,\frac{1}{2}+\epsilon}} \norm{u_3}_{X_{\frac{5}{18}+,\frac{1}{2}+\epsilon}}, \] which shows that, in this case, \eqref{multlin general lwp} holds for every $s > \frac{4}{9}$, provided $\epsilon > 0$ is chosen small enough. \\
(ii.3) \underline{$\abs{(\xi_1,q_1)} \gg 1$, $\abs{(\xi_1,q_1)} \sim \abs{(\xi_2,q_2)} \sim \abs{(\xi_3,q_3)} \sim \abs{(\xi_4,q_4)}$}: \\ Let us assume without loss of generality that $\abs{\xi_{\text{max}}} \coloneqq \max{\set{\abs{\xi_1}, \abs{\xi_2}, \abs{\xi_3}, \abs{\xi_4}}} = \abs{\xi_1}$. From $\abs{\xi} \lesssim \abs{\xi_{\text{max}}} = \abs{\xi_1}$ and the fact that all $\abs{(\xi_i,q_i)}$ are of approximately the same size, it follows that \begin{align*} \abs{\xi} \langle (\xi,q) \rangle^{s} & \lesssim \abs{\xi}^{\frac{1}{10}} \langle (\xi,q) \rangle^{-\frac{1}{5}-} \abs{\xi_1}^\frac{1}{4} \langle (\xi_1,q_1) \rangle^{\frac{s}{4}+\frac{1}{20}+} \langle (\xi_2,q_2) \rangle^{\frac{s}{4}+\frac{1}{20}+\frac{13}{60}+} \\ & \ \ \ \cdot \langle (\xi_3,q_3) \rangle^{\frac{s}{4}+\frac{1}{20}+\frac{13}{60}+} \langle (\xi_4,q_4) \rangle^{\frac{s}{4}+\frac{1}{20}+\frac{13}{60}+}.\end{align*}
Interpolating the Airy $L^5$-estimate \eqref{Airy L5} with the trivial $L^2$-estimate \eqref{trivial L2}, we obtain \begin{equation} \norm{I_{x}^{\frac{1}{10}}u}_{L_{txy}^{5-}} \lesssim \norm{u}_{\frac{1}{5}+,\frac{1}{2}-}, \end{equation} and using the dual version of this estimate, followed by Hölder's inequality, yields \begin{align*} \norm{\partial_x(u_1u_2u_3u_4)}_{X_{s,-\frac{1}{2}+2\epsilon}} & \lesssim \norm{I_{x}^{\frac{1}{10}} J^{-\frac{1}{5}-} \brBigg{I_{x}^{\frac{1}{4}}J^{\frac{s}{4}+\frac{1}{20}+}u_1 \prod_{i=2}^{4}J^{\frac{s}{4}+\frac{1}{20}+\frac{13}{60}+}u_i }}_{X_{0,-\frac{1}{2}+2\epsilon}} \\ & \lesssim \norm{I_{x}^{\frac{1}{4}}J^{\frac{s}{4}+\frac{1}{20}+}u_1 \prod_{i=2}^{4}J^{\frac{s}{4}+\frac{1}{20}+\frac{13}{60}+}u_i}_{L_{txy}^{\frac{5}{4}+}} \\ & \leq \norm{I_{x}^{\frac{1}{4}}J^{\frac{s}{4}+\frac{1}{20}+}u_1}_{L_{txy}^5} \prod_{i=2}^{4} \norm{J^{\frac{s}{4}+\frac{1}{20}+\frac{13}{60}+}u_i}_{L_{Txy}^{5+}}. \end{align*}
For the first factor, we can once again invoke the Airy $L^5$-estimate \eqref{Airy L5}, and for the remaining factors we use the optimized $L^5$-estimate \eqref{optimized L5}, interpolated with \eqref{optimized L6}. We obtain \begin{align*} ... & \lesssim \norm{u_1}_{X_{\frac{s}{4}+\frac{2}{5}+,\frac{1}{2}+\epsilon}} \norm{u_2}_{X_{\frac{s}{4}+\frac{2}{5}+,\frac{1}{2}+\epsilon}}\norm{u_3}_{X_{\frac{s}{4}+\frac{2}{5}+,\frac{1}{2}+\epsilon}}\norm{u_4}_{X_{\frac{s}{4}+\frac{2}{5}+,\frac{1}{2}+\epsilon}} \\ & \lesssim \norm{u_1}_{X_{s,\frac{1}{2}+\epsilon}}\norm{u_2}_{X_{s,\frac{1}{2}+\epsilon}}\norm{u_3}_{X_{s,\frac{1}{2}+\epsilon}}\norm{u_4}_{X_{s,\frac{1}{2}+\epsilon}}, \end{align*}
and the final step requires \[ \frac{s}{4} + \frac{2}{5} + \leq s \Leftrightarrow \frac{8}{15} + \leq s. \] By choosing $\epsilon > 0$ sufficiently small, we thus obtain \eqref{multlin general lwp} for every $s> \frac{8}{15} = s_0(3)$, completing the proof in the case $k=3$.
 \\ \\
(iii) \underline{$k \geq 4$}: \\
(iii.1) \underline{$\abs{(\xi_1,q_1)} \lesssim 1$}: \\ This case can again be treated exactly as in (i.1) and (ii.1). For an arbitrary $s \geq 0$ and $0 < \epsilon \ll 1$, we obtain \begin{align*} \norm{ \partial_x \brBigg{ \prod_{i=1}^{k+1} u_i}}_{X_{s,-\frac{1}{2}+2\epsilon}} & \lesssim \norm{ \prod_{i=1}^{k+1} u_i}_{L_{txy}^2} \\ & \leq \norm{u_1}_{L_{txy}^2} \prod_{i=2}^{k+1} \norm{u_{i}}_{L_{t}^{\infty} L_{xy}^{\infty}} \\ & \lesssim \norm{u_1}_{L_{txy}^2} \prod_{i=2}^{k+1} \norm{u_i}_{L_{t}^\infty L_{xy}^2} \\ & \lesssim \norm{u_1}_{X_{0,\frac{1}{2}+\epsilon}} \prod_{i=2}^{k+1} \norm{u_i}_{X_{0,\frac{1}{2}+\epsilon}} \\ & \lesssim \prod_{i=1}^{k+1} \norm{u_i}_{X_{s,\frac{1}{2}+\epsilon}}, \end{align*}
which completes the proof in this subcase. \\
(iii.2) \underline{$\abs{(\xi_1,q_1)} \gg 1$, $\abs{(\xi_1,q_1)} \gg \abs{(\xi_5,q_5)}$}: \\
(iii.2.1) \underline{$\abs{(\xi,q)} \gg \abs{(\xi_2,q_2)}$}: \\ In this situation, we have $\abs{(\xi_1,q_1)} \sim \abs{(\xi,q)} \gg \abs{(\xi_2,q_2)}  \geq \abs{(\xi_3,q_3)}$, from which it follows that \begin{align*} \abs{\xi} \langle (\xi,q) \rangle^s & \lesssim \abs{\abs{(\xi_1,q_1)}^2 - \abs{(\xi_2,q_2)}^2}^\frac{1}{2} \langle (\xi_1,q_1) \rangle^s \langle (\xi_2,q_2) \rangle^{\frac{1}{2}-\frac{2}{k}} \\ & \ \ \ \cdot \abs{\abs{(\xi,q)}^2 - \abs{(\xi_3,q_3)}^2}^\frac{1}{2} \langle (\xi,q) \rangle^{0-} \langle (\xi_3,q_3)\rangle^{\frac{1}{2}-\frac{2}{k}} \prod_{i=4}^{k+1} \langle (\xi_i,q_i) \rangle^{-\frac{2}{k}+}. \end{align*}
By undoing Plancherel and applying Hölder's inequality, we then obtain \[ I_f \lesssim \norm{MP(J^su_1,J^{\frac{1}{2}-\frac{2}{k}}u_2)}_{L_{txy}^2} \norm{MP(J^{0-}f,J^{\frac{1}{2}-\frac{2}{k}}\tilde{u}_3)}_{L_{txy}^2} \prod_{i=4}^{k+1} \norm{J^{-\frac{2}{k}+}\tilde{u}_i}_{L_{t}^\infty L_{xy}^\infty}, \]
and for the first two factors we use \eqref{OG bilin} and \eqref{MP dual}, respectively, while the remaining factors can be dealt with by applying Sobolev's embedding theorem, followed by \eqref{Linfty to L2}. These steps lead to \[ ... \lesssim \norm{u_1}_{X_{s,\frac{1}{2}+\epsilon}} \norm{u_2}_{X_{1-\frac{2}{k}+,\frac{1}{2}+\epsilon}} \norm{f}_{X_{0,\frac{1}{2}-2\epsilon}} \norm{u_3}_{X_{1-\frac{2}{k}+,\frac{1}{2}+\epsilon}} \prod_{i=4}^{k+1} \norm{u_i}_{X_{1-\frac{2}{k}+,\frac{1}{2}+\epsilon}}, \]
and by choosing $\epsilon > 0$ small enough, we obtain \eqref{multlin general lwp} for every $s > 1- \frac{2}{k}$ in this case. \\
(iii.2.2) \underline{$\abs{(\xi,q)} \lesssim \abs{(\xi_2,q_2)}$}: \\ In this case, the frequency configuration under consideration, leads us to \begin{align*} \abs{\xi} \langle (\xi,q) \rangle^s &\stackrel{k=4}{\lesssim} \abs{\abs{(\xi_1,q_1)}^2- \abs{(\xi_5,q_5)}^2}^\frac{1}{2} \langle (\xi_1,q_1) \rangle^s \\ & \ \ \ \ \cdot \langle (\xi,q) \rangle^{-\frac{2}{9}-} \langle (\xi_2,q_2) \rangle^{\frac{3}{9}+} \langle (\xi_3,q_3) \rangle^{\frac{3}{9}+} \langle (\xi_4,q_4) \rangle^{-\frac{4}{9}+}  \end{align*}
and
 \begin{align*} \abs{\xi} \langle (\xi,q) \rangle^s & \stackrel{k\geq 5}{\lesssim} \abs{\abs{(\xi_1,q_1)}^2- \abs{(\xi_5,q_5)}^2}^\frac{1}{2} \langle (\xi_1,q_1) \rangle^s \langle (\xi_5,q_5) \rangle^{\frac{1}{2}-\frac{11}{6k}+} \\ & \ \ \ \ \cdot \langle (\xi,q) \rangle^{-\frac{2}{9}-} \langle (\xi_2,q_2) \rangle^{\frac{7}{9}-\frac{11}{6k}+} \langle (\xi_3,q_3) \rangle^{\frac{7}{9}-\frac{11}{6k}+} \prod_{\substack{i=4 \\ i\neq 5}}^{k+1} \langle (\xi_i,q_i) \rangle^{-\frac{11}{6k}+}, \end{align*}
and after applying Plancherel's theorem and Hölder's inequality, we thus obtain
\begin{align*} I_f &\lesssim \norm{MP(J^su_1,u_5)}_{L_{txy}^2} \norm{J^{-\frac{2}{9}-}f J^{\frac{3}{9}+}\tilde{u}_2 J^{\frac{3}{9}+}\tilde{u}_3}_{L_{txy}^2} \norm{J^{-\frac{4}{9}+}\tilde{u}_4}_{L_{t}^\infty L_{xy}^\infty} \\ & \leq \norm{MP(J^su_1,u_5)}_{L_{txy}^2} \norm{J^{-\frac{2}{9}-}f}_{L_{Txy}^{6-}} \norm{J^{\frac{3}{9}+}\tilde{u}_2}_{L_{Txy}^{6+}} \norm{J^{\frac{3}{9}+}\tilde{u}_3}_{L_{Txy}^{6+}} \\ & \ \ \ \cdot \norm{J^{-\frac{4}{9}+}\tilde{u}_4}_{L_{t}^\infty L_{xy}^\infty} \end{align*}
in the case $k=4$, and
\begin{align*} I_f &\lesssim \norm{MP(J^su_1,J^{\frac{1}{2}-\frac{11}{6k}+}u_5)}_{L_{txy}^2} \norm{J^{-\frac{2}{9}-}f J^{\frac{7}{9}-\frac{11}{6k}+}\tilde{u}_2 J^{\frac{7}{9}-\frac{11}{6k}+}\tilde{u}_3}_{L_{txy}^2} \\ & \ \ \ \cdot \prod_{\substack{i=4 \\ i\neq 5}}^{k+1} \norm{J^{-\frac{11}{6k}+}\tilde{u}_i}_{L_{t}^\infty L_{xy}^\infty} \\ & \leq \norm{MP(J^su_1,J^{\frac{1}{2}-\frac{11}{6k}+}u_5)}_{L_{txy}^2} \norm{J^{-\frac{2}{9}-}f}_{L_{Txy}^{6-}} \norm{J^{\frac{7}{9}-\frac{11}{6k}+}\tilde{u}_2}_{L_{Txy}^{6+}} \norm{J^{\frac{7}{9}-\frac{11}{6k}+}\tilde{u}_3}_{L_{Txy}^{6+}} \\ & \ \ \ \cdot \prod_{\substack{i=4 \\ i\neq 5}}^{k+1} \norm{J^{-\frac{11}{6k}+}\tilde{u}_i}_{L_{t}^\infty L_{xy}^\infty} \end{align*}
in the case $k \geq 5$.
In both cases we can now proceed in the same manner: The first factor can be estimated using \eqref{OG bilin}, and for the second factor we apply \eqref{optimized L6 dual}, while for the third and fourth factors we interpolate \eqref{optimized L6} with the trivial $L^\infty$-estimate \eqref{trivial Linfty}. All remaining factors are treated using the Sobolev embedding theorem and \eqref{Linfty to L2}, from which it follows that
\[ ... \lesssim \norm{u_1}_{X_{s,\frac{1}{2}+\epsilon}} \norm{u_5}_{X_{\frac{1}{2}+,\frac{1}{2}+\epsilon}} \norm{f}_{X_{0,\frac{1}{2}-2\epsilon}} \norm{u_2}_{X_{\frac{5}{9}+,\frac{1}{2}+\epsilon}} \norm{u_3}_{X_{\frac{5}{9}+,\frac{1}{2}+\epsilon}} \norm{u_4}_{X_{\frac{5}{9}+,\frac{1}{2}+\epsilon}} \]
and
\[ ... \lesssim \norm{u_1}_{X_{s,\frac{1}{2}+\epsilon}} \norm{f}_{X_{0,\frac{1}{2}-2\epsilon}} \prod_{i=2}^{k+1} \norm{u_i}_{X_{1-\frac{11}{6k}+,\frac{1}{2}+\epsilon}}, \]
respectively. Now, choosing $\epsilon > 0$ sufficiently small, the proof of \eqref{multlin general lwp} in this subcase is thus established for every $s > \frac{5}{9} = s_0(4)$ ($k=4$) and $s > 1- \frac{11}{6k}$ ($k \geq 5$). \\
(iii.3) \underline{$\abs{(\xi_1,q_1)} \gg 1$, $\abs{(\xi_1,q_1)} \sim ... \sim \abs{(\xi_5,q_5)}$}: \\
(iii.3.1) \underline{$\abs{\xi_{\text{max}}} = \abs{\xi_i}$, for some $i \in \set{1,...,5}$}: \\ Without loss of generality, let us again assume $\abs{\xi_{\text{max}}} = \abs{\xi_1}$. We then have $\abs{\xi} \lesssim \abs{\xi_1}$, and since we can redistribute the derivatives arbitrarily among the first five factors, it follows that \begin{align*} \abs{\xi} \langle (\xi,q) \rangle^s & \lesssim \abs{\xi}^{\frac{1}{6}} \langle (\xi,q) \rangle^{-\frac{1}{3}-} \abs{\xi_1}^{\frac{19}{90}} \langle (\xi_1,q_1) \rangle^{\frac{s}{5}+\frac{1}{15}+\frac{16(k-4)}{45k}+} \\ & \ \ \ \cdot \brBigg{\prod_{i=2}^{5} \langle (\xi_i,q_i) \rangle^{\frac{s}{5}+\frac{1}{15}+\frac{7}{45}+\frac{16(k-4)}{45k}+}} \prod_{i=6}^{k+1} \langle (\xi_i,q_i) \rangle^{-\frac{16}{9k}+}. \end{align*}
Now, if we interpolate the Airy $L^6$-estimate \eqref{Airy L6 Xsb} with the trivial $L^2$-estimate \eqref{trivial L2}, we obtain \begin{equation} \label{dual Airy L6} \norm{I_{x}^{\frac{1}{6}}u}_{L_{txy}^{6-}} \lesssim \norm{u}_{X_{\frac{1}{3}+,\frac{1}{2}-}},  \end{equation} and an application of its dual version, followed by Hölder's inequality then leads us to 
\begin{align*} &\norm{\partial_x \brBigg{ \prod_{i=1}^{k+1} u_i}}_{X_{s,-\frac{1}{2}+2\epsilon}} \\ &\lesssim \norm{I_{x}^{\frac{19}{90}} J^{\frac{s}{5}+\frac{1}{15}+\frac{16(k-4)}{45k}+}u_1 \brBigg{\prod_{i=2}^{5} J^{\frac{s}{5}+\frac{1}{15}+\frac{7}{45}+\frac{16(k-4)}{45k}+}u_i} \brBigg{ \prod_{i=6}^{k+1} J^{-\frac{16}{9k}+} u_i}}_{L_{txy}^{\frac{6}{5}+}} \\ & \leq \norm{I_{x}^{\frac{19}{90}} J^{\frac{s}{5}+\frac{1}{15}+\frac{16(k-4)}{45k}+}u_1}_{L_{txy}^6} \prod_{i=2}^{5} \norm{J^{\frac{s}{5}+\frac{1}{15}+\frac{7}{45}+\frac{16(k-4)}{45k}+}u_i}_{L_{Txy}^{6+}} \\ & \ \ \ \cdot \prod_{i=6}^{k+1} \norm{J^{-\frac{16}{9k}+} u_i}_{L_{t}^\infty L_{xy}^\infty}. \end{align*}
For the first factor, we now apply the Airy $L^6$-estimate \eqref{Airy L6 Xsb}, and for the second through fifth factors, we employ the optimized $L^6$-estimate \eqref{optimized L6} interpolated with the trivial $L^\infty$-estimate \eqref{trivial Linfty}. All remaining factors are treated using the Sobolev embedding theorem and estimate \eqref{Linfty to L2}, and we ultimately obtain \begin{align*} ... &\lesssim \brBigg{\prod_{i=1}^{5} \norm{u_i}_{X_{\frac{s}{5}+\frac{4}{5}-\frac{64}{45k}+,\frac{1}{2}+\epsilon}}} \prod_{i=6}^{k+1} \norm{u_i}_{X_{1-\frac{16}{9k}+,\frac{1}{2}+\epsilon}} \\ & \lesssim \brBigg{\prod_{i=1}^{5} \norm{u_i}_{X_{s,\frac{1}{2}+\epsilon}}} \prod_{i=6}^{k+1} \norm{u_i}_{X_{1-\frac{16}{9k}+,\frac{1}{2}+\epsilon}}.   \end{align*}
Here, the final step requires \[ \frac{s}{5}+\frac{4}{5}-\frac{64}{45k} + \leq s \Leftrightarrow 1- \frac{16}{9k}+ \leq s \]
and by choosing $\epsilon > 0$ sufficiently small, the desired estimate is thereby established for all $s > 1 - \frac{16}{9k} = s_{0}(k)$. \\
(iii.3.2) \underline{$\abs{\xi_{\text{max}}} = \abs{\xi_i}$, for some $i \in \set{6,...,k+1}$}\footnote{It is evident that this case cannot occur for $k=4$.}: \\
(iii.3.2.1) \underline{$\abs{(\xi_1,q_1)} \gg \abs{(\xi_i,q_i)}$}: \\ In this case, we have $\abs{\xi} \lesssim \abs{\xi_i} \leq \abs{(\xi_i,q_i)}$, which allows us to shift derivatives \\ onto the low-frequency factor $u_i$. 
This yields \begin{align*} \abs{\xi} \langle (\xi,q) \rangle^s & \lesssim \abs{\abs{(\xi_1,q_1)}^2-\abs{(\xi_i,q_i)}^2}^\frac{1}{2} \langle (\xi_1,q_1) \rangle^s \langle (\xi_i,q_i) \rangle^{\frac{1}{2}-\frac{11}{6k}+} \\ & \ \ \ \cdot \langle (\xi,q) \rangle^{-\frac{2}{9}-} \langle (\xi_2,q_2) \rangle^{\frac{7}{9}-\frac{11}{6k}+} \langle (\xi_3,q_3) \rangle^{\frac{7}{9}-\frac{11}{6k}+} \prod_{\substack{j=4 \\ j \neq i}}^{k+1} \langle (\xi_j,q_j) \rangle^{-\frac{11}{6k}+}, \end{align*}
and undoing Plancherel, followed by Hölder's inequality leads to \begin{align*} I_f &\lesssim \norm{MP(J^su_1,J^{\frac{1}{2}-\frac{11}{6k}+}u_i)}_{L_{txy}^2} \norm{J^{-\frac{2}{9}-}f J^{\frac{7}{9}-\frac{11}{6k}+}\tilde{u}_2 J^{\frac{7}{9}-\frac{11}{6k}+} \tilde{u}_3}_{L_{txy}^2} \\ & \ \ \ \cdot \prod_{\substack{j=4 \\ j \neq i}}^{k+1} \norm{J^{-\frac{11}{6k}+}\tilde{u}_j}_{L_{t}^\infty L_{xy}^\infty} \\ & \leq \norm{MP(J^su_1,J^{\frac{1}{2}-\frac{11}{6k}+}u_i)}_{L_{txy}^2} \norm{J^{-\frac{2}{9}-}f}_{L_{Txy}^{6-}} \norm{J^{\frac{7}{9}-\frac{11}{6k}+}\tilde{u}_2}_{L_{Txy}^{6+}} \\ & \ \ \ \cdot \norm{J^{\frac{7}{9}-\frac{11}{6k}+}\tilde{u}_3}_{L_{Txy}^{6+}} \prod_{\substack{j=4 \\ j \neq i}}^{k+1} \norm{J^{-\frac{11}{6k}+}\tilde{u}_j}_{L_{t}^\infty L_{xy}^\infty}.  \end{align*}
At this point, we can proceed exactly as in case (iii.2.2), and for sufficiently small $\epsilon > 0$, we thus obtain the desired multilinear estimate for every $s > 1- \frac{11}{6k}$.  \\
(iii.3.2.2) \underline{$\abs{(\xi_1,q_1)} \sim \abs{(\xi_i,q_i)}$}: \\ In this final subcase, no further work is required, since we have $\abs{(\xi_1,q_1)} \sim \abs{(\xi_2,q_2)} \\ \sim \abs{(\xi_3,q_3)} \sim \abs{(\xi_4,q_4)} \sim \abs{(\xi_i,q_i)} \sim \abs{(\xi_5,q_5)}$, allowing us to argue as in case (iii.3.1). We apply the Airy $L^6$-estimate to the factor $u_i$, and use the optimized $L^6$-estimate for the factors $u_1,...,u_4$. For all remaining factors, we invoke the Sobolev embedding theorem, which altogether yields the desired estimate \eqref{multlin general lwp} for every $s > 1- \frac{16}{9k}$.  
\end{proof}

\begin{rem}
The proof shows that local well-posedness for every $s> \frac{1}{2}$ in the case of the modified ZK equation can be achieved through the following simple alternative: for widely separated frequencies $\abs{(\xi_i,q_i)} \gg \abs{(\xi_j,q_j)}$, one applies the bilinear estimate \eqref{OG bilin}, whereas for closely spaced frequencies $\abs{(\xi_i,q_i)} \sim \abs{(\xi_j,q_j)}$, one relies on the linear $L^4$-estimate \eqref{ineq1}. In the next section, we will incorporate the newly developed bilinear refinements \eqref{bilinrefinement} and the resonance function in order to further lower the well-posedness threshold to $s > \frac{11}{24}$. 

\end{rem}

\subsection{Improved LWP for mZK}

As previously noted, the proof of Theorem \ref{LWP mZK} reduces to verifiying a concrete trilinear estimate. We fix it in the following

\begin{prop}
\label{mZK prop} For every $s > \frac{11}{24}$, there exists a sufficiently small number $\epsilon = \epsilon(s) > 0$ such that the following estimate holds for all time-localized functions $u_i \in X_{s,\frac{1}{2}+\epsilon}$:
\begin{equation} \label{multilin modified ZK} \norm{\partial_{x} (u_1u_2u_3)}_{X_{s,-\frac{1}{2}+2\epsilon}} \lesssim  \norm{u_1}_{X_{s,\frac{1}{2}+\epsilon}} \norm{u_2}_{X_{s,\frac{1}{2}+\epsilon}} \norm{u_3}_{X_{s,\frac{1}{2}+\epsilon}}. \end{equation}

\end{prop}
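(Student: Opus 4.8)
The plan is to run the usual $X_{s,b}$ duality argument, reusing the structure of the proof of Proposition~\ref{prop general lwp} for $k=2$ and replacing only the step which forces $s\ge\tfrac12$ there. By duality and Plancherel, $\eqref{multilin modified ZK}$ is equivalent to bounding the quadrilinear form $I_f\coloneqq\abs{\int \xi\langle(\xi,q)\rangle^{s}\,\hat u_1\hat u_2\hat u_3\,\overline{\hat f}}$, the integral taken over the convolution hyperplane $(\tau,\xi,q)=\sum_{i=1}^3(\tau_i,\xi_i,q_i)$, by $\norm{f}_{X_{0,\frac12-2\epsilon}}\prod_i\norm{u_i}_{X_{s,\frac12+\epsilon}}$ for all $f$ with $\norm{f}_{X_{0,\frac12-2\epsilon}}\le1$; as usual one may assume all Fourier transforms nonnegative and, by symmetry, $\abs{(\xi_1,q_1)}\ge\abs{(\xi_2,q_2)}\ge\abs{(\xi_3,q_3)}$. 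The regime $\abs{(\xi_1,q_1)}\lesssim1$ is handled by Sobolev embedding without loss exactly as in case (i.1) there, and the widely separated regime $\abs{(\xi_1,q_1)}\gg\abs{(\xi_3,q_3)}$ exactly as in cases (i.2.1)--(i.2.2): expanding $\abs{\xi}\langle(\xi,q)\rangle^{s}$ in terms of the $MP$-symbols $\absbig{\abs{(\xi_a,q_a)}^2-\abs{(\xi_b,q_b)}^2}^{1/2}$ and applying $\eqref{OG bilin}$, $\eqref{MP dual}$, $\eqref{ineq1d}$ and Hölder closes these already for every $s>\tfrac14$. What remains, and contains the whole difficulty, is the comparable-frequency case $N\coloneqq\abs{(\xi_1,q_1)}\sim\abs{(\xi_2,q_2)}\sim\abs{(\xi_3,q_3)}\gg1$, in which the bare distribution of $\abs{\xi}\langle(\xi,q)\rangle^{s}\sim N^{1+s}$ over three inputs of regularity $s$ only gives $s\ge\tfrac12$.

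For that case I would combine the resonance function with the new bilinear refinements. The convolution constraint gives the identity $\sigma=\sigma_1+\sigma_2+\sigma_3-\Omega$, where $\sigma\coloneqq\tau-\phi(\xi,q)$, $\sigma_i\coloneqq\tau_i-\phi(\xi_i,q_i)$ and the cubic resonance is $\Omega\coloneqq\phi(\xi,q)-\sum_i\phi(\xi_i,q_i)=R(\xi,q,\xi_1+\xi_2,q_1+q_2)+R(\xi_1+\xi_2,q_1+q_2,\xi_1,q_1)$; hence the largest of $\langle\sigma\rangle,\langle\sigma_1\rangle,\langle\sigma_2\rangle,\langle\sigma_3\rangle$ dominates $\langle\Omega\rangle$. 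The case analysis then runs along two axes: the size of $\langle\Omega\rangle$, and, for the output $(\xi,q)$ and for the relevant pairwise sums $(\xi_a+\xi_b,q_a+q_b)$, the size of $\abs{3\xi^2-q^2}$ relative to $\abs{\xi}$ --- the quantity deciding whether, and with which exponent $\alpha$, the projector $P^\alpha$ of the refinement $\eqref{bilinrefinement}$ is switched on. When $\langle\Omega\rangle$ is large, a modulation weight $\langle\sigma_i\rangle^{\frac12+\epsilon}\gtrsim\langle\Omega\rangle^{\frac12}$ can be partly spent to lower $b$ below $\tfrac12$ on the other factors, precisely into the range of $\eqref{MP dual}$, $\eqref{dual refinement}$, $\eqref{ineq1d}$, $\eqref{optimized L6 dual}$, and this alone wins a power of $N$. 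When $\langle\Omega\rangle$ is small, an inspection of the cubic polynomial $\Omega$ shows the frequencies are pinned near a resonant variety --- either $(\xi,q)$ or some pairwise sum has $\abs{3\xi^2-q^2}$ small, which for each fixed $\xi$ confines the corresponding integer $q$ to $O(1)$ values (equivalently, for each fixed $q$, to a $\xi$-interval of length $O(\abs{q}^{-1})$, exactly the mechanism of the $(II)$-estimate in the proof of Theorem~\ref{L4}); the ensuing counting/measure gain, assisted where convenient by $\eqref{Airy L4 from L2}$, again supplies a power of $N$. Away from that variety the refinements $\eqref{bilinrefinement}$ and $\eqref{dual refinement}$ apply to a pair whose sum has $x$-frequency $\sim N$ --- gaining up to $\tfrac14-$ of an $x$-derivative --- supplemented by $\eqref{OG bilin}$ for any mildly separated sub-pair and by the Airy estimates $\eqref{Airy L5}$, $\eqref{Airy L6 from L2}$ when $\abs{\xi}$ dominates $\abs{q}$ and by $\eqref{Schrödinger L5}$ in the reverse regime. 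A dyadic decomposition in the $\sigma_i$ and in $\abs{3\xi^2-q^2}$, followed by a geometric summation, then closes the case.

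The numerology fixing the threshold at $\tfrac38$ is transparent: in the comparable-frequency case the bare requirement $N^{1+s}\lesssim N^{3s}$ reads $s\ge\tfrac12$, and a gain of $\delta$ $x$-derivatives turns it into $s\ge\frac{1-\delta}{2}$; the refinement $\eqref{bilinrefinement}$ supplies $\delta=\tfrac14-$, hence $s>\tfrac38$, while in the leaves where the refinement is not directly available the same amount is recovered from the resonance identity or from the counting/measure restriction near the resonant variety. Accordingly, the main obstacle is not any single inequality but the organisation of this last case: one must keep track simultaneously of (a) which of the four modulations is the largest, (b) the size of $\abs{3\xi^2-q^2}$ for the output and for each pairwise sum --- this selects which $P^\alpha$ is usable and with which $\alpha$ --- and (c) the optimal redistribution of the $s$ derivatives and of the $\tfrac12\mp$ of Bourgain regularity among the four factors, so that every factor lands in a space covered by one of $\eqref{OG bilin}$, $\eqref{bilinrefinement}$, $\eqref{dual refinement}$, $\eqref{ineq1d}$, $\eqref{optimized L6}$, $\eqref{Airy L5}$, $\eqref{Airy L4 from L2}$, $\eqref{Schrödinger L5}$ or their duals; verifying that all these choices can be made mutually consistent throughout the range $s>\tfrac38$ is where the real work of the proof lies.
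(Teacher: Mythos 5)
Your roadmap is the same one the paper follows: duality, Plancherel, reduction to the comparable-frequency regime $N\coloneqq\abs{(\xi_1,q_1)}\sim\abs{(\xi_2,q_2)}\sim\abs{(\xi_3,q_3)}\gg1$, and then a case split controlled by (a) the size of the $MP$-symbols $\absbig{\abs{(\xi_a,q_a)}^2-\abs{(\xi_b,q_b)}^2}$, (b) the size of $\abs{3(\xi_i+\xi_j)^2-(q_i+q_j)^2}$ relative to a power of $\abs{\xi_i+\xi_j}$ (which decides whether $P^\alpha$ and hence $\eqref{bilinrefinement}$ or $\eqref{dual refinement}$ can be switched on), (c) the resonance function, and (d) a counting gain in the periodic variable via $Q_N^{(\beta)}$ projectors and $\eqref{Airy L4 from L2}$. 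Your numerology is also right: $\eqref{bilinrefinement}$ with $\alpha=1$ gains $\tfrac14$ of an $x$-derivative and drops the symmetric requirement $s\ge\tfrac12$ to $s>\tfrac38$, and in subcases where only $\alpha=\tfrac12$ is available the shortfall is compensated by an $MP$-factor of size $\gtrsim\abs{\xi_0}^{5/4}$ or by the resonance.

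What the proposal does not contain, and is the actual content of the paper's proof, is the resolution of the case where neither $\eqref{OG bilin}$ nor $\eqref{bilinrefinement}$ applies to any useful pair. The paper's observation there is geometric: if $\abs{3(\xi_i+\xi_j)^2-(q_i+q_j)^2}\ll\abs{\xi_i+\xi_j}$ fails to be violated for several of the pairs simultaneously, then subtracting these constraints forces each $(\xi_i,q_i)$ to lie close to one of the two lines $q=\pm\sqrt{3}\xi$ with quantitative bounds $\abs{\sqrt{3}\xi_i\mp q_i}\ll1$ or $\ll\abs{\xi_\text{max}}^{-1/2}$; one then has to record the admissible sign tuples $(\sgn\xi_0,\ldots,\sgn\xi_3)$ and $(\sgn q_0,\ldots,\sgn q_3)$, rewrite $R$ so that the mKdV part $12(\xi_1+\xi_2)(\xi_1+\xi_3)(\xi_2+\xi_3)$ dominates the remainder, and where it does not (because some $\abs{\xi_i+\xi_j}$ is tiny) fall back on $\abs{q_i+q_j}\ll\abs{\xi_0}^{\beta}$ and the $Q_N^{(\beta)}$/$\eqref{Airy L4 from L2}$ counting. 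Your phrase ``pinned near a resonant variety'' points in the right direction, but it is precisely the sign bookkeeping and the calibration of the coupled alternatives (e.g., $\abs{3(\xi_i+\xi_j)^2-(q_i+q_j)^2}\gtrsim\abs{\xi_i+\xi_j}^{1/2}$ versus $\abs{\abs{(\xi_a,q_a)}^2-\abs{(\xi_b,q_b)}^2}\gtrsim\abs{\xi_0}^{5/4}$) that makes every leaf close at exactly $s>\tfrac38$; you acknowledge this is ``where the real work lies'', which is correct, but it means the proposal is a roadmap for the paper's proof rather than a proof. Whether your alternative organization (splitting first on $\langle\Omega\rangle$ rather than first on the applicability of $P^\alpha$, from which the sign constraints and hence the resonance lower bound are then derived) produces the same dichotomy without circularity is plausible but not verified by the sketch.
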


\begin{proof}
We begin by noting once again that \eqref{multilin modified ZK} is equivalent to showing \begin{align*} I_f &\coloneqq \absBigg{\int_{\R^2} \sum_{q \in \Z} \int_{\R^4} \sum_{\substack{q_1,q_2 \in \Z \\ \ast}} \xi \langle (\xi,q) \rangle^{s} \brBigg{\prod_{i=1}^{3} \hat{u}_i(\tau_i,\xi_i,q_i)} \overline{\hat{f}}(\tau,\xi,q) \mathrm{d} \tau_1 \mathrm{d} \tau_2 \mathrm{d} \xi_1 \mathrm{d} \xi_2 \mathrm{d} (\tau,\xi)} \\ & \lesssim \norm{f}_{X_{0,\frac{1}{2}-2\epsilon}} \norm{u_1}_{X_{s,\frac{1}{2}+\epsilon}}  \norm{u_2}_{X_{s,\frac{1}{2}+\epsilon}} \norm{u_3}_{X_{s,\frac{1}{2}+\epsilon}} \end{align*}
for all $f\in X_{0,\frac{1}{2}-2\epsilon}$ with $\norm{f}_{X_{0,\frac{1}{2}-2\epsilon}} \leq 1$, and the convolution constraint being $\ast \coloneqq (\tau,\xi,q) = (\tau_1+\tau_2+\tau_3,\xi_1+\xi_2+\xi_3,q_1+q_2+q_3)$. As in the proof of Proposition \ref{prop general lwp}, we may assume without loss of generality that $\hat{u}_i,\hat{f} \geq 0$ and $\abs{(\xi_1,q_1)} \geq \abs{(\xi_2,q_2)} \geq \abs{(\xi_3,q_3)}$, where $(\xi_i,q_i)$ denotes the frequency variable associated with the factor $u_i$. Furthermore, we will arrange the $\xi_i$ for $i=1,...,3$ in ascending order according to the magnitude of their absolute values, and we will use the notation $\xi_{\text{min}}$, $\xi_{\text{med}}$, $\xi_\text{max}$ for this purpose. Lastly, we will also use $(\xi_0,q_0)$ as an alternative notation for the frequency variable $(\xi,q)$. With these preliminary considerations in place, we now turn to a detailed case-by-case analysis. \\ \\
(i) \underline{$\abs{(\xi_1,q_1)} \lesssim 1$ or $\abs{(\xi_1,q_1)} \gg \abs{(\xi_3,q_3)}$}: \\ These cases have already been discussed in the proof of Proposition \ref{prop general lwp}, where we obtained \eqref{multilin modified ZK} for every $s > \frac{1}{4}$. \\
(ii) \underline{$\abs{(\xi_1,q_1)} \gg 1$, $\abs{(\xi_1,q_1)} \sim \abs{(\xi_2,q_2)} \sim \abs{(\xi_3,q_3)}$}: \\
(ii.1) \underline{$\abs{\xi_0} \lesssim 1$}: \\ In this situation, we have \[ \abs{\xi_0} \langle (\xi_0,q_0) \rangle^s \lesssim \langle (\xi_0,q_0) \rangle^{0-} \langle (\xi_1,q_1) \rangle^{\frac{s}{3}+} \langle (\xi_2,q_2) \rangle^{\frac{s}{3}+} \langle (\xi_3,q_3) \rangle^{\frac{s}{3}+}, \]
and an application of the dual version of \eqref{ineq1d}, followed by Hölder's inequality yields 
\begin{align*} \norm{\partial_x (u_1u_2u_3)}_{X_{s,-\frac{1}{2}+2\epsilon}} &\lesssim \norm{J^{0-}(J^{\frac{s}{3}+}u_1 J^{\frac{s}{3}+}u_2 J^{\frac{s}{3}+}u_3)}_{X_{0,-\frac{1}{2}+2\epsilon}} \\ & \lesssim \norm{J^{\frac{s}{3}+}u_1 J^{\frac{s}{3}+}u_2 J^{\frac{s}{3}+}u_3}_{L_{txy}^{\frac{4}{3}+}} \\ & \lesssim \norm{J^{\frac{s}{3}+}u_1}_{L_{Txy}^{4+}} \norm{J^{\frac{s}{3}+}u_2}_{L_{Txy}^{4+}} \norm{J^{\frac{s}{3}+}u_3}_{L_{Txy}^{4+}}. \end{align*}
By interpolating \eqref{optimized L6} with the linear $L^4$-estimate \eqref{ineq1}, and applying the resulting estimate three times, we obtain \begin{align*} ... & \lesssim \norm{u_1}_{X_{\frac{s}{3}+,\frac{1}{2}+\epsilon}} \norm{u_2}_{X_{\frac{s}{3}+,\frac{1}{2}+\epsilon}} \norm{u_3}_{X_{\frac{s}{3}+,\frac{1}{2}+\epsilon}} \\ & \lesssim \norm{u_1}_{X_{s,\frac{1}{2}+\epsilon}} \norm{u_2}_{X_{s,\frac{1}{2}+\epsilon}} \norm{u_2}_{X_{s,\frac{1}{2}+\epsilon}}, \end{align*} with the last step requiring
\[ \frac{s}{3}+ \leq s \Leftrightarrow  0+ \leq s. \]
By choosing $\epsilon > 0$ sufficiently small, the proof of \eqref{multilin modified ZK} is thereby established in this case for every $s > 0$. \\ 
(ii.2) \underline{${\lvert }\xi _{0}{\rvert } \gg 1\text{, }{\lvert }{\lvert }(\xi _{i},q_{i}){\rvert }^{2}-{\lvert }(\xi _{j},q_{j}){\rvert }^{2}{\rvert } \gtrsim {\lvert }\xi _{0}{\rvert }^{\frac{7}{6}}\text{, for some }i,j \in {\{0,1,2,3\}}\text{ with }i\neq j$}: \\
Given that
${\lvert }(\xi _{1},q_{1}){\rvert } \sim {\lvert }(\xi _{2},q_{2}){\rvert } \sim
{\lvert }(\xi _{3},q_{3}){\rvert }$, we may, without loss of generality distinguish
between two cases. If we have
${\lvert }{\lvert }(\xi _{1},q_{1}){\rvert }^{2}-{\lvert }(\xi _{2},q_{2}){\rvert }^{2}{\rvert } \gtrsim
{\lvert }\xi _{0}{\rvert }^{\frac{7}{6}}$, it follows that
\begin{align*}
{\lvert }\xi _{0}{\rvert } \langle (\xi _{0},q_{0}) \rangle ^{s} & \lesssim
{\lvert }{\lvert }(\xi _{1},q_{1}){\rvert }^{2}-{\lvert }(\xi _{2},q_{2}){\rvert }^{2}{\rvert }^{
\frac{1}{2}} \langle (\xi _{1},q_{1}) \rangle ^{s} \langle (\xi _{2},q_{2})
\rangle ^{-\frac{1}{24}+}
\\
& \ \ \ \cdot \langle (\xi _{0},q_{0}) \rangle ^{0-} \langle (\xi _{3},q_{3})
\rangle ^{\frac{11}{24}+},
\end{align*}
and undoing Plancherel, followed by H\"{o}lder's inequality, leads us to
\begin{align*}
I_{f} &\lesssim {\lVert }MP(J^{s}u_{1},J^{-\frac{1}{24}+}u_{2}){\rVert }_{L_{txy}^{2}}
{\lVert }J^{0-}f J^{\frac{11}{24}+}\tilde{u}_{3}{\rVert }_{L_{txy}^{2}}
\\
& \leq {\lVert }MP(J^{s}u_{1},J^{-\frac{1}{24}+}u_{2}){\rVert }_{L_{txy}^{2}}
{\lVert }J^{0-}f {\rVert }_{L_{txy}^{4-}} {\lVert }J^{\frac{11}{24}+}\tilde{u}_{3}{\rVert }_{L_{Txy}^{4+}}.
\end{align*}
We now apply estimate \eqref{OG bilin} to the first factor; for the second
factor, we use \eqref{ineq1d}, and for the third, \eqref{ineq1} interpolated
with \eqref{optimized L6}. This yields
\begin{equation*}
... \lesssim {\lVert }u_{1}{\rVert }_{X_{s,\frac{1}{2}+\epsilon }}
{\lVert }u_{2}{\rVert }_{X_{\frac{11}{24}+,\frac{1}{2}+\epsilon }} {\lVert }f {\rVert }_{X_{0,
\frac{1}{2}-2\epsilon }} {\lVert }u_{3}{\rVert }_{X_{\frac{11}{24}+,\frac{1}{2}+
\epsilon }},
\end{equation*}
and by choosing $\epsilon > 0$ small enough, we obtain the desired estimate \eqref{multilin modified ZK} for every $s > \frac{11}{24}$. On the other
hand, if
${\lvert }{\lvert }(\xi _{0},q_{0}){\rvert }^{2}-{\lvert }(\xi _{1},q_{1}){\rvert }^{2}{\rvert } \gtrsim
{\lvert }\xi _{0}{\rvert }^{\frac{7}{6}}$, it holds that
\begin{align*}
{\lvert }\xi _{0}{\rvert } \langle (\xi _{0},q_{0}) \rangle ^{s} & \lesssim
{\lvert }{\lvert }(\xi _{0},q_{0}){\rvert }^{2}-{\lvert }(\xi _{1},q_{1}){\rvert }^{2}{\rvert }^{
\frac{1}{2}} \langle (\xi _{0},q_{0}) \rangle ^{0-} \langle (\xi _{1},q_{1})
\rangle ^{-\frac{1}{24}+}
\\
& \ \ \ \cdot \langle (\xi _{2},q_{2}) \rangle ^{\frac{s}{2}+
\frac{11}{48}+} \langle (\xi _{3},q_{3}) \rangle ^{\frac{s}{2}+
\frac{11}{48}+},
\end{align*}
and it follows, analogously to the first case, that
\begin{equation*}
I_{f} \lesssim {\lVert }MP(J^{0-}f,J^{-\frac{1}{24}+}\tilde{u}_{1}){\rVert }_{L_{txy}^{2}}
{\lVert }J^{\frac{s}{2}+\frac{11}{48}+}u_{2}{\rVert }_{L_{txy}^{4}}
{\lVert }J^{\frac{s}{2}+\frac{11}{48}+}u_{3}{\rVert }_{L_{txy}^{4}}.
\end{equation*}
For the first factor, we now use \eqref{MP dual}, and the remaining two
factors can be treated using the $L^{4}$-estimate \eqref{ineq1}. We thus
obtain
\begin{align*}
... &\lesssim {\lVert }f {\rVert }_{X_{0,\frac{1}{2}-2\epsilon }} {\lVert }u_{1}{\rVert }_{X_{
\frac{11}{24}+,\frac{1}{2}+\epsilon }} {\lVert }u_{2}{\rVert }_{X_{\frac{s}{2}+
\frac{11}{48}+,\frac{1}{2}+\epsilon }} {\lVert }u_{3}{\rVert }_{X_{\frac{s}{2}+
\frac{11}{48}+,\frac{1}{2}+\epsilon }}
\\
& \lesssim {\lVert }f {\rVert }_{X_{0,\frac{1}{2}-2\epsilon }} {\lVert }u_{1}{\rVert }_{X_{
\frac{11}{24}+,\frac{1}{2}+\epsilon }} {\lVert }u_{2}{\rVert }_{X_{s,
\frac{1}{2}+\epsilon }} {\lVert }u_{3}{\rVert }_{X_{s,\frac{1}{2}+
\epsilon }},
\end{align*}
and the final step requires precisely
\begin{equation*}
\frac{s}{2}+\frac{11}{48}+ \leq s \Leftrightarrow \frac{11}{24}+ \leq s,
\end{equation*}
so that we arrive at the desired result in this case as well, provided
$\epsilon > 0$ is chosen small enough. \\
(ii.3) \underline{${\lvert }\xi _{0}{\rvert } \gg 1\text{, }{\lvert }{\lvert }(\xi _{i},q_{i}){\rvert }^{2}-{\lvert }(\xi _{j},q_{j}){\rvert }^{2}{\rvert } \ll {\lvert }\xi _{0}{\rvert }^{\frac{7}{6}}\text{, for all }i,j \in {\{0,1,2,3\}}\text{ with }i\neq j$}: \\
For the remainder of this proof, we may assume without loss of generality
that
 $(\xi _{1},\xi _{2},\xi _{3}) = (\xi _{\text{min}},\xi _{\text{med}},
\xi _{\text{max}})$. \\
(ii.3.1) \underline{${\lvert }\xi _{\text{med}}{\rvert } \ll {\lvert }\xi _{\text{max}}{\rvert }$}: \\
In this situation, we have
${\lvert }\xi _{0}{\rvert } \lesssim {\lvert }\xi _{1}+\xi _{3}{\rvert }$ and
${\lvert }\xi _{0}{\rvert } \lesssim {\lvert }\xi _{2}+\xi _{3}{\rvert }$, so that we obtain
\begin{align*}
{\lvert }\xi _{0}{\rvert } \langle (\xi _{0},q_{0}) \rangle ^{s} &\lesssim
{\lvert }\xi _{1}+\xi _{3}{\rvert }^{\frac{1}{4}} \langle (\xi _{1},q_{1})
\rangle ^{s-} \langle (\xi _{3},q_{3}) \rangle ^{
\frac{1}{4}+} {\lvert} \xi_0+(-\xi_2) {\rvert}^{\frac{1}{4}} \langle (\xi _{0},q_{0}) \rangle ^{0-} \\ & \ \ \ \cdot \langle (-\xi _{2},-q_{2})
\rangle ^{\frac{1}{4}+}
\end{align*}
and
\begin{align*}
{\lvert }\xi _{0}{\rvert } \langle (\xi _{0},q_{0}) \rangle ^{s} &\lesssim
{\lvert }\xi _{2}+\xi _{3}{\rvert }^{\frac{1}{4}} \langle (\xi _{2},q_{2})
\rangle ^{s-} \langle (\xi _{3},q_{3}) \rangle ^{
\frac{1}{4}+} {\lvert} \xi_0+(-\xi_1) {\rvert}^{\frac{1}{4}} \langle (\xi _{0},q_{0}) \rangle ^{0-} \\ & \ \ \ \cdot \langle (-\xi _{1},-q_{1})
\rangle ^{\frac{1}{4}+},
\end{align*}
respectively.
If we now have
${\lvert } 3(\xi _{1}+\xi _{3})^{2} - (q_{1}+q_{3})^{2}{\rvert } \gtrsim
{\lvert }\xi _{1}+\xi _{3}{\rvert }$, then an application of Parseval's identity and
H\"{o}lder's inequality yields
\begin{equation*}
I_{f} \lesssim
{\lVert }I_{x}^{\frac{1}{4}} P^{1}(J^{s-}u_{1} J^{\frac{1}{4}+}u_{3}){\rVert }_{L_{txy}^{2}}
{\lVert }I_{x}^{\frac{1}{4}} P^{1}(J^{0-}f J^{\frac{1}{4}+}\tilde{u}_{2}){\rVert }_{L_{txy}^{2}},
\end{equation*}
and the first factor can be treated using the bilinear refinement \eqref{bilinrefinement} of \eqref{ineq1}, while the second factor can be handled using the variant \eqref{dual refinement} of \eqref{bilinrefinement}. This leads us to
\begin{align*}
... &\lesssim {\lVert }u_{1}{\rVert }_{X_{s,\frac{1}{2}+\epsilon }}
{\lVert }u_{3}{\rVert }_{X_{\frac{1}{4}+,\frac{1}{2}+\epsilon }} {\lVert }f {\rVert }_{X_{0,
\frac{1}{2}-2\epsilon }} {\lVert }u_{2}{\rVert }_{X_{\frac{1}{4}+,\frac{1}{2}+
\epsilon }}
\\ &\lesssim {\lVert }f {\rVert }_{X_{0,
\frac{1}{2}-2\epsilon }} \prod_{i=1}^{3} {\lVert }u_{i} {\rVert }_{X_{s,
\frac{1}{2}+\epsilon }},
\end{align*}
with the last step being valid for all $s > \frac{1}{4}$, provided $\epsilon > 0$ is chosen small
enough. In the case
${\lvert }3(\xi _{2}+\xi _{3})^{2} - (q_{2}+q_{3})^{2}{\rvert } \gtrsim
{\lvert }\xi _{2}+\xi _{3}{\rvert }$, one uses the second pointwise estimate given above and follows the same argument to again reach $s > \frac{1}{4}$. We may therefore assume
\begin{equation*}
{\lvert } 3(\xi _{1}+\xi _{3})^{2} - (q_{1}+q_{3})^{2}{\rvert } \ll
{\lvert }\xi _{1}+\xi _{3}{\rvert } \sim {\lvert }\xi _{0}{\rvert }
\end{equation*}
and
\begin{equation*}
{\lvert }3(\xi _{2}+\xi _{3})^{2} - (q_{2}+q_{3})^{2}{\rvert } \ll
{\lvert }\xi _{2}+\xi _{3}{\rvert } \sim {\lvert }\xi _{0}{\rvert },
\end{equation*}
and we will first consider why, under these assumptions, ${\lvert} \xi_3 {\rvert} \sim {\lvert} (\xi_1,q_1) {\rvert}$ must hold: Under the assumption $ {\lvert} \xi_3 {\rvert} \ll {\lvert} (\xi_1,q_1) {\rvert} $, it follows - taking into account the active assumptions (ii.3) and (ii.3.1) - that
\begin{equation*}
{\lvert} q_1 {\rvert} \sim {\lvert} q_2 {\rvert} \sim {\lvert} q_3 {\rvert} \sim {\lvert} q_0 {\rvert} \sim {\lvert} (\xi_1,q_1) {\rvert},
\end{equation*}
and the two constraints mentioned above, which arise from the bilinear refinement of the linear $L^{4}$-estimate, further impose the sign configurations $(q_1,q_2,q_3,q_0) = (\pm,\pm,\mp,\pm)$. However, we also have ${\lvert} \xi_1+\xi_2 {\rvert} \ll {\lvert} \xi_0 {\rvert}$ and ${\lvert} \xi_0-\xi_3 {\rvert} \ll {\lvert} \xi_0 {\rvert}$, which, in conjunction with (ii.3) and the signs of the $q_i$, leads us to $ {\lvert} q_1 - q_2 {\rvert} \ll {\lvert} \xi_0 {\rvert}$, $ {\lvert} q_0 + q_3 {\rvert} \ll {\lvert} \xi_0 {\rvert}$, and
\begin{equation*}
2 {\lvert} q_1+q_3 {\rvert} = {\lvert} q_1 - q_2 + q_0 + q_3 {\rvert} \leq {\lvert} q_1 - q_2 {\rvert} + {\lvert} q_0 + q_3 {\rvert} \ll {\lvert} \xi_0 {\rvert},
\end{equation*}
in contradiction to ${\lvert} 3(\xi_1+\xi_3)^{2} - (q_1+q_3)^{2} {\rvert} \ll {\lvert} \xi_1 + \xi_3 {\rvert} \sim {\lvert} \xi_0 {\rvert}$. Consequently, it follows that ${\lvert} \xi_3 {\rvert} \sim {\lvert} (\xi_1,q_1) {\rvert}$ must be true. In this situation, two cases must be distinguished:
If ${\lvert} q_3 {\rvert} \gtrsim {\lvert} (\xi_1,q_1) {\rvert}$, then from
\begin{equation*}
{\lvert} q_1^{2} - {\lvert} (\xi_3,q_3) {\rvert}^{2} {\rvert} \leq {\lvert} {\lvert} (\xi_1,q_1) {\rvert}^{2} - {\lvert} (\xi_3,q_3) {\rvert}^{2} {\rvert} + 3 \xi_1^{2} \ll \xi_0^{2},
\end{equation*}
\begin{equation*}
{\lvert} q_2^{2} - {\lvert} (\xi_3,q_3) {\rvert}^{2} {\rvert} \leq {\lvert} {\lvert} (\xi_2,q_2) {\rvert}^{2} - {\lvert} (\xi_3,q_3) {\rvert}^{2} {\rvert} + 3 \xi_2^{2}  \ll  \xi_0^{2},
\end{equation*}
and the two constraints coming from the bilinear refinement, we again obtain the following two admissible sign configurations: $(q_1,q_2,q_3,q_0) = (\pm,\pm,\mp,\pm)$. At this stage, however, we can precisely as previously discussed, deduce that ${\lvert} q_1 + q_3 {\rvert} \ll {\lvert} \xi_0 {\rvert}$, which is in contradiction with ${\lvert} 3(\xi_1+\xi_3)^{2} - (q_1+q_3)^{2} {\rvert} \ll {\lvert} \xi_1 + \xi_3 {\rvert} \sim {\lvert} \xi_0 {\rvert}$. Thus, the sole remaining case to be examined is ${\lvert} q_3 {\rvert} \ll {\lvert} (\xi_1,q_1) {\rvert}$. In this case, the active assumptions (ii.3) and (ii.3.1) also yield ${\lvert} q_0 {\lvert} \ll {\lvert} (\xi_1,q_1) {\rvert}$ and ${\lvert} q_1 {\rvert} \sim {\lvert} q_2 {\rvert} \sim {\lvert} (\xi_1,q_1) {\rvert}$, which altogether imply that $q_1$ and $q_2$ must have opposite signs. In order to take advantage of this fact, we consider the resonance function $R$. A straightforward computation shows that
%
\begin{equation}
\label{resonance1}
R = -6(\xi _{1}+\xi _{2})(\xi _{1}+\xi _{3})(\xi _{2}+\xi _{3}) +
\sum _{i=1}^{3} \xi _{i} ({\lvert }(\xi _{0},q_{0}){\rvert }^{2} -
{\lvert }(\xi _{i},q_{i}){\rvert }^{2}),
\end{equation}
and if we now assume ${\lvert} \xi_1+\xi_2 {\rvert} \gtrsim {\lvert} \xi_0 {\rvert}^{\frac{1}{6}} \sim {\lvert} \xi_{\text{max}} {\rvert}^{\frac{1}{6}} $,
then by applying the
reverse triangle inequality and taking into account (ii.3),
${\lvert }\xi _{1}+\xi _{3}{\rvert } \sim {\lvert }\xi _{\text{max}}{\rvert }$, and
${\lvert }\xi _{2}+\xi _{3}{\rvert } \sim {\lvert }\xi _{\text{max}}{\rvert }$, it follows that
\begin{equation*}
{\lvert }R {\rvert } \gtrsim {\lvert }\xi _{\text{max}}{\rvert }^{\frac{13}{6}}.
\end{equation*}
Moreover, since
\begin{equation*}
{\lvert }R {\rvert } \leq \max _{i=0}^{3} \langle \tau _{i} - \varphi (\xi _{i},q_{i})
\rangle \eqqcolon \max _{i=0}^{3} \langle \sigma _{i} \rangle ,
\end{equation*}
we obtain
\begin{equation*}
{\lvert }\xi _{\text{max}}{\rvert }^{\frac{13}{6}} \lesssim \max _{i=0}^{3}
\langle \sigma _{i} \rangle ,
\end{equation*}
and now we must distinguish between two cases. If we have
$ \max _{i=0}^{3} \langle \sigma _{i} \rangle = \langle \sigma _{0}
\rangle $, it follows that
\begin{align*}
{\lvert }\xi _{0}{\rvert } \langle (\xi _{0},q_{0}) \rangle ^{s} \langle \sigma _{0}
\rangle ^{-\frac{1}{2}+2\epsilon } & \lesssim \prod _{i=1}^{3}
\langle (\xi _{i},q_{i}) \rangle ^{\frac{s}{3}},
\end{align*}
and undoing Plancherel, followed by H\"{o}lder's inequality yields
\begin{align*}
I_{f} &\lesssim
{\lVert }J^{\frac{s}{3}}u_{1} J^{\frac{s}{3}}u_{2} J^{\frac{s}{3}}u_{3}{\rVert }_{L_{txy}^{2}}
{\lVert }f {\rVert }_{X_{0,\frac{1}{2}-2\epsilon }}
\\
& \leq {\lVert }J^{\frac{s}{3}}u_{1}{\rVert }_{L_{Txy}^{6}}
{\lVert }J^{\frac{s}{3}}u_{2}{\rVert }_{L_{Txy}^{6}} {\lVert }J^{\frac{s}{3}}u_{3}{\rVert }_{L_{Txy}^{6}}
{\lVert }f {\rVert }_{X_{0,\frac{1}{2}-2\epsilon }}.
\end{align*}
By applying the optimized $L^{6}$-estimate \eqref{optimized L6} three times,
we arrive at
\begin{align*}
... &\lesssim {\lVert }u_{1}{\rVert }_{X_{\frac{s}{3}+\frac{2}{9}+,\frac{1}{2}+
\epsilon }}{\lVert }u_{2}{\rVert }_{X_{\frac{s}{3}+\frac{2}{9}+,\frac{1}{2}+
\epsilon }}{\lVert }u_{3}{\rVert }_{X_{\frac{s}{3}+\frac{2}{9}+,\frac{1}{2}+
\epsilon }}{\lVert }f {\rVert }_{X_{0,\frac{1}{2}-2\epsilon }}
\\
& \lesssim {\lVert }u_{1}{\rVert }_{X_{s,\frac{1}{2}+\epsilon }} {\lVert }u_{2}{\rVert }_{X_{s,
\frac{1}{2}+\epsilon }}{\lVert }u_{3}{\rVert }_{X_{s,\frac{1}{2}+\epsilon }}
{\lVert }f {\rVert }_{X_{0,\frac{1}{2}-2\epsilon }},
\end{align*}
with the last step requiring
\begin{equation*}
\frac{s}{3}+\frac{2}{9}+ \leq s \Leftrightarrow \frac{1}{3} + \leq s.
\end{equation*}
By choosing $\epsilon > 0$ sufficiently small, we therefore obtain the
desired estimate \eqref{multilin modified ZK} for every
$s > \frac{1}{3}$. On the other hand, if
$\max _{i=0}^{3} \langle \sigma _{i} \rangle \neq \langle \sigma _{0}
\rangle $ (WLOG,
$\max _{i=0}^{3} \langle \sigma _{i} \rangle = \langle \sigma _{1}
\rangle $), it follows that
\begin{equation*}
{\lvert }\xi _{0}{\rvert } \langle (\xi _{0},q_{0}) \rangle ^{s} \langle \sigma _{1}
\rangle ^{-\frac{1}{2}-\epsilon } \lesssim \langle (\xi _{1},q_{1})
\rangle ^{s} \langle (\xi _{0},q_{0}) \rangle ^{-\frac{2}{9}-}
\langle (\xi _{2},q_{2}) \rangle ^{\frac{1}{9}+} \langle (\xi _{3},q_{3})
\rangle ^{\frac{1}{9}+},
\end{equation*}
and we analogously obtain
\begin{align*}
I_{f} & \lesssim
{\lVert }J^{-\frac{2}{9}-}f J^{\frac{1}{9}+}u_{2} J^{\frac{1}{9}+}u_{3}{\rVert }_{L_{txy}^{2}}
{\lVert }u_{1}{\rVert }_{X_{s,\frac{1}{2}+\epsilon }}
\\
& \leq {\lVert }J^{-\frac{2}{9}-}f {\rVert }_{L_{Txy}^{6-}}
{\lVert }J^{\frac{1}{9}+}u_{2}{\rVert }_{L_{Txy}^{6+}}
{\lVert }J^{\frac{1}{9}+}u_{3}{\rVert }_{L_{Txy}^{6+}} {\lVert }u_{1}{\rVert }_{X_{s,
\frac{1}{2}+\epsilon }}.
\end{align*}
Applying estimate \eqref{optimized L6 dual} to the first factor, and estimate \eqref{optimized L6}, interpolated with \eqref{trivial Linfty}, to the
second and third factors, we ultimately obtain
\begin{equation*}
... \lesssim {\lVert }f {\rVert }_{X_{0,\frac{1}{2}-2\epsilon }} {\lVert }u_{2}{\rVert }_{X_{
\frac{1}{3}+,\frac{1}{2}+\epsilon }}{\lVert }u_{3}{\rVert }_{X_{\frac{1}{3}+,
\frac{1}{2}+\epsilon }} {\lVert }u_{1}{\rVert }_{X_{s,\frac{1}{2}+
\epsilon }},
\end{equation*}
which is precisely the estimate \eqref{multilin modified ZK}, valid for every
$s > \frac{1}{3}$, provided $\epsilon > 0$ is chosen small enough. We
have thus seen that in the case
${\lvert }\xi _{1}+\xi _{2}{\rvert } \gtrsim {\lvert }\xi _{0}{\rvert }^{\frac{1}{6}}$, the resonance function
leads us to the desired result. Conversely, assume that ${\lvert }\xi _{1}+\xi _{2}{\rvert } \ll {\lvert }\xi _{0}{\rvert }^{\frac{1}{6}}$. From (ii.3) and the fact that $q_1$ and $q_2$ have opposite signs, it then follows that ${\lvert} q_1+q_2 {\rvert} \ll {\lvert} \xi_0 {\rvert}^{\frac{1}{6}}$, and we now aim to exploit this smallness condition. For dyadic $N \in 2^{\mathbb{N}_{0}}$ and
$\beta \in \mathbb{R}$, we define the Fourier projectors $Q_{N}^{(\beta )}$ by setting
\begin{equation*}
Q_{N}^{(\beta )}u \coloneqq \mathcal{F}_{y}^{-1} \chi _{
{\{{\lvert }q {\rvert } \lesssim N^{\beta}\}}} \mathcal{F}_{y}u.
\end{equation*}
If we now set
\begin{equation*}
u_{1} = P_{N_{1}} u_{1}, \ u_{2} = P_{N_{2}}u_{2}, \
\text{and} \ u_{3} = P_{N_{3}} u_{3}
\end{equation*}
for dyadic numbers $N_{1},N_{2},N_{3} \in 2^{\mathbb{N}_{0}}$, it follows
that ${\lvert }q_{1}+q_{2}{\rvert } \ll N_1^{\frac{1}{6}}$, and redistributing derivatives gives\footnote{We assume without loss of generality that ${\lvert} \xi_1 {\rvert}, {\lvert} \xi_2 {\rvert} \gtrsim 1$. Otherwise, one may invoke the Sobolev embedding in $x$ without any loss of derivatives, which allows for the conclusion ${\lVert} u_{i} {\rVert}_{L_{Tx}^{4}L_{y}^{2}} \lesssim_{T} {\lVert} u_{i} {\rVert}_{X_{0,\frac{1}{2}+}}$, $i \in \{1,2\}$.}
\begin{equation*}
{\lvert} \xi_0 {\rvert} \langle (\xi_0,q_0) \rangle^{s} \lesssim N_1^{\frac{3}{4}-2s} {\lvert} \xi_0 {\rvert}^{\frac{1}{8}} {\lvert} \xi_1 {\rvert}^{\frac{1}{8}} \langle (\xi_1,q_1) \rangle^{s} {\lvert} \xi_2 {\rvert}^{\frac{1}{8}} \langle (\xi_2,q_2) \rangle^{s} {\lvert} \xi_3 {\rvert}^{\frac{1}{8}} \langle (\xi_3,q_3) \rangle^{s}.
\end{equation*}
An application of the dual version of \eqref{Airy L4 from L2}, followed by H\"{o}lder's inequality
then yields
\begin{align*}
&{\lVert }\partial _{x} (Q_{N_1}^{(\frac{1}{6})}(u_{1} u_{2}) u_{3}){\rVert }_{X_{s,-
\frac{1}{2}+2\epsilon }}
\\
& \lesssim
N_1^{\frac{3}{4}-2s} {\lVert }I_{x}^{\frac{1}{8}}(Q_{N_1}^{(\frac{1}{6})}(I_{x}^{\frac{1}{8}} J^{s}u_{1} I_{x}^{\frac{1}{8}} J^{s}u_{2}) I_{x}^{\frac{1}{8}} J^{s}u_{3}){\rVert }_{X_{0,-
\frac{1}{2}+2\epsilon }}
\\
& \lesssim
N_1^{\frac{3}{4}-2s} {\lVert }Q_{N_1}^{(\frac{1}{6})}(I_{x}^{\frac{1}{8}} J^{s}u_{1} I_{x}^{\frac{1}{8}} J^{s}u_{2}) I_{x}^{\frac{1}{8}} J^{s}u_{3}{\rVert }_{L_{tx}^{
\frac{4}{3}} L_{y}^{2}}
\\
&\leq
N_1^{\frac{3}{4}-2s} {\lVert }Q_{N_1}^{(\frac{1}{6})}(I_{x}^{\frac{1}{8}} J^{s}u_{1} I_{x}^{\frac{1}{8}} J^{s}u_{2}){\rVert }_{L_{tx}^{2}
L_{y}^{\infty}}
{\lVert }I_{x}^{\frac{1}{8}} J^{s}u_{3}{\rVert }_{L_{tx}^{4}L_{y}^{2}},
\end{align*}
and the inner $L_{y}^{\infty}$-norm can be estimated in the following manner:
\begin{align*}
{\lVert }Q_{N_1}^{(\frac{1}{6})}(I_{x}^{\frac{1}{8}} J^{s}u_{1} I_{x}^{\frac{1}{8}} J^{s}u_{2}){\rVert }_{L_{y}^{
\infty}} & \lesssim
{\lVert } \chi _{{\{{\lvert }q {\rvert } \lesssim N_1^{\frac{1}{6}}\}}} \Fcal _{y}(I_{x}^{\frac{1}{8}} J^{s}u_{1} I_{x}^{\frac{1}{8}} J^{s}u_{2}){\rVert }_{L_{q}^{1}}
\\
& \leq {\lVert }\chi _{{\{{\lvert }q {\rvert } \lesssim N_1^{\frac{1}{6}}\}}}{\rVert }_{L_{q}^{1}}
{\lVert }\Fcal _{y}(I_{x}^{\frac{1}{8}} J^{s}u_{1} I_{x}^{\frac{1}{8}} J^{s}u_{2}){\rVert }_{L_{q}^{
\infty}}
\\
& \lesssim
N_1^{\frac{1}{6}} {\lVert }I_{x}^{\frac{1}{8}} J^{s}u_{1} I_{x}^{\frac{1}{8}} J^{s}u_{2}{\rVert }_{L_{y}^{1}}.
\end{align*}
With this, we further obtain
\begin{align*}
{\lVert }\partial _{x} (Q_{N_1}^{(\frac{1}{6})}(u_{1} u_{2}) u_{3}){\rVert }_{X_{s,-
\frac{1}{2}+2\epsilon }} & \lesssim
N_1^{\frac{3}{4}+\frac{1}{6}-2s}{\lVert }I_{x}^{\frac{1}{8}} J^{s}u_{1} I_{x}^{\frac{1}{8}} J^{s}u_{2}{\rVert }_{L_{tx}^{2}L_{y}^{1}}
{\lVert }I_{x}^{\frac{1}{8}} J^{s}u_{3}{\rVert }_{L_{tx}^{4}L_{y}^{2}}
\\
& \leq N_1^{\frac{3}{4}+\frac{1}{6}-2s} \prod _{i=1}^{3}
{\lVert }I_{x}^{\frac{1}{8}} J^{s}u_{i}{\rVert }_{L_{tx}^{4}L_{y}^{2}},
\end{align*}
and three applications of \eqref{Airy L4 from L2} ultimately give
\begin{align*}
... \lesssim N_1^{\frac{3}{4}+\frac{1}{6}-2s} \prod _{i=1}^{3} {\lVert }u_{i}{\rVert }_{X_{s,\frac{1}{2}+\epsilon }}.
\end{align*}
The desired estimate \eqref{multilin modified ZK} now follows upon performing a dyadic summation over all $N_1 \sim N_2 \sim N_3 \gg 1$. This argument works provided that 
\begin{equation*}
\frac{3}{4} + \frac{1}{6} - 2s < 0 \Leftrightarrow \frac{11}{24} < s
\end{equation*}
and $\epsilon > 0$ is chosen sufficiently small; this concludes the discussion of this subcase. \\
(ii.3.2) \underline{${\lvert }\xi _{\text{min}}{\rvert } \ll {\lvert }\xi _{\text{med}}{\rvert } \sim {\lvert }\xi _{\text{max}}{\rvert }$}: \\
Under this assumption, the pointwise estimates
 ${\lvert }\xi _{0}{\rvert } \lesssim {\lvert }\xi _{1}+\xi _{2}{\rvert }$ and
${\lvert }\xi _{0}{\rvert } \lesssim {\lvert }\xi _{1}+\xi _{3}{\rvert }$ hold. Thus, if
${\lvert }3(\xi _{i}+\xi _{j})^{2}-(q_{i}+q_{j})^{2}{\rvert } \gtrsim
{\lvert }\xi _{i}+\xi _{j}{\rvert }$ holds for at least one tuple
$(i,j) \in {\{(1,2),(1,3)\}}$, then we may argue exactly as
in case (ii.3.1), so that \eqref{bilinrefinement} and \eqref{dual refinement} lead to the desired multilinear estimate \eqref{multilin modified ZK}, valid for every $s > \frac{1}{4}$. Consequently,
we may assume
\begin{equation*}
{\lvert }3(\xi _{1}+\xi _{2})^{2}-(q_{1}+q_{2})^{2}{\rvert } \ll
{\lvert }\xi _{1}+\xi _{2}{\rvert } \sim {\lvert }\xi _{3}{\rvert }
\end{equation*}
and
\begin{equation*}
{\lvert }3(\xi _{1}+\xi _{3})^{2}-(q_{1}+q_{3})^{2}{\rvert } \ll
{\lvert }\xi _{1}+\xi _{3}{\rvert } \sim {\lvert }\xi _{3}{\rvert }.
\end{equation*}
Let us now first consider the case where ${\lvert} \xi_3 {\rvert} \ll {\lvert} (\xi_1,q_1) {\rvert}$.
Then, invoking the general assumption (ii.3), we obtain
\begin{equation*}
{\lvert} q_1 {\rvert} \sim {\lvert} q_2 {\rvert} \sim {\lvert} q_3 {\rvert} \sim {\lvert} q_0 {\rvert} \sim {\lvert} (\xi_1,q_1) {\rvert},
\end{equation*}
and in combination with the two constraints from the bilinear refinement, only the sign distributions $(q_1,q_2,q_3,q_0) = (\pm,\mp,\mp,\mp)$ remain possible. We once again distinguish two cases. In the case ${\lvert} \xi_2+\xi_3 {\rvert} \gtrsim {\lvert} \xi_0 {\rvert}^{\frac{1}{3}}$, the resonance function $R$ in the form \eqref{resonance1} provides a remedy: taking into account (ii.3), ${\lvert} \xi_1+\xi_2 {\rvert} \sim {\lvert} \xi_{\text{max}} {\rvert}$, and ${\lvert} \xi_1+\xi_3 {\rvert} \sim {\lvert} \xi_{\text{max}} {\rvert}$, we obtain
\begin{equation*}
{\lvert} R {\rvert} \gtrsim {\lvert} \xi_0 {\rvert}^{\frac{1}{3}} {\lvert} \xi_{\text{max}} {\rvert}^{2},
\end{equation*}
and this - as already established in case (ii.3.1) - implies, at the very least, that an arbitrary $s > \frac{1}{3}$ can be reached in the multilinear estimate \eqref{multilin modified ZK}. Conversely, if ${\lvert} \xi_2+\xi_3 {\rvert} \ll {\lvert} \xi_0 {\rvert}^{\frac{1}{3}}$, it follows that ${\lvert} \xi_0-\xi_1 {\rvert} \ll {\lvert} \xi_0 {\rvert}^{\frac{1}{3}}$, and, upon taking into account (ii.3), the magnitudes, and the signs of the $q_i$, one consequently obtains ${\lvert} q_2-q_3 {\rvert} \ll {\lvert} \xi_0 {\rvert}^{\frac{1}{3}}$, $ {\lvert} q_0+q_1 {\rvert} \ll {\lvert} \xi_0 {\rvert}^{\frac{1}{3}}$, and hence
${\lvert} q_1+q_2 {\rvert} \ll {\lvert} \xi_0 {\rvert}^{\frac{1}{3}}$.
Moreover, we have
\begin{equation*}
{\lvert} \xi_0 {\rvert} \leq {\lvert} \xi_1 {\rvert} + {\lvert} \xi_2+\xi_3 {\rvert} \leq {\lvert} \xi_1 {\rvert} + \epsilon_0 {\lvert} \xi_0 {\rvert}^{\frac{1}{3}},
\end{equation*}
and thus, ${\lvert} \xi_0 {\rvert} \lesssim {\lvert} \xi_1 {\rvert}$, so that the Airy $L_{tx}^{4}L_{y}^{2}$-estimate \eqref{Airy L4 from L2} in the previously carried out smallness argument can be applied a total of four times: Again, for dyadic numbers $N_1,N_2,N_3 \in 2^{\mathbb{N}}$, we set
\begin{equation*}
u_i = P_{N_i}u_i, \quad i \in \{1,2,3\},
\end{equation*}
and obtain ${\lvert} q_1+q_2 {\rvert} \ll N_1^{\frac{1}{3}}$ as well as
\begin{equation*}
{\lvert} \xi_0 {\rvert} \langle (\xi_0,q_0) \rangle^{s} \lesssim N_1^{\frac{1}{2}-2s} {\lvert} \xi_0 {\rvert}^{\frac{1}{8}} \prod_{i=1}^{3} {\lvert} \xi_i {\rvert}^{\frac{1}{8}} \langle (\xi_i,q_i) \rangle^{s},
\end{equation*}
which altogether - computing exactly as in case (ii.3.1) - yields
\begin{equation*}
{\lVert }\partial _{x} (Q_{N_1}^{(\frac{1}{3})}(u_{1} u_{2}) u_{3}){\rVert }_{X_{s,-
\frac{1}{2}+2\epsilon }}  \lesssim N_1^{\frac{1}{2}+\frac{1}{3}-2s} \prod_{i=1}^{3} {\lVert} u_i {\rVert}_{X_{s,\frac{1}{2}+\epsilon}}.
\end{equation*}
A dyadic summation over all $N_1 \sim N_2 \sim N_3 \gg 1$ now leads to the desired multilinear estimate \eqref{multilin modified ZK}, valid for all
\begin{equation*}
\frac{1}{2}+\frac{1}{3}-2s < 0 \Leftrightarrow \frac{5}{12} < s,
\end{equation*}
provided $\epsilon > 0$ is chosen sufficiently small. With this, the discussion of the case ${\lvert} \xi_3 {\rvert} \ll {\lvert} (\xi_1,q_1) {\rvert}$ is complete. We now consider the case ${\lvert} \xi_3 {\rvert} \sim {\lvert} (\xi_1,q_1) {\rvert}$.
If ${\lvert} \xi_2+\xi_3 {\rvert} \gtrsim {\lvert} \xi_0 {\rvert}^{\frac{1}{3}}$ to begin with, no further steps are required, as we then have
\begin{equation*}
{\lvert} R {\rvert} \gtrsim {\lvert} \xi_0 {\rvert}^{\frac{1}{3}} {\lvert} \xi_{\text{max}} {\rvert}^{2}
\end{equation*}
(observe \eqref{resonance1}, (ii.3), ${\lvert} \xi_1+\xi_2 {\rvert} \sim {\lvert} \xi_{\text{max}} {\rvert}$, and ${\lvert} \xi_1+\xi_3 {\rvert} \sim {\lvert} \xi_{\text{max}} {\rvert}$), and, as previously computed in case (ii.3.1), this ensures (at the very least) that all values $s > \frac{1}{3}$ can be realized within \eqref{multilin modified ZK}. We may thus assume ${\lvert} \xi_2+\xi_3 {\rvert} \ll {\lvert} \xi_0 {\rvert}^{\frac{1}{3}}$, which directly implies ${\lvert} \xi_0 {\rvert} \lesssim {\lvert} \xi_1 {\rvert}$, and this fact will play a crucial role in a subsequent smallness argument.
Now, if ${\lvert} q_3 {\rvert} \gtrsim {\lvert} (\xi_1,q_1) {\rvert}$ were true, then, in conjunction with the active assumptions (ii.3), (ii.3.1), ${\lvert} \xi_0 - \xi_1 {\rvert} = {\lvert} \xi_2 + \xi_3 {\rvert} \ll {\lvert} \xi_0 {\rvert}^{\frac{1}{3}}$, and the two constraints arising from the inapplicability of the bilinear refinement, we would obtain, on the one hand, the relations
\begin{equation*}
{\lvert} q_1 {\rvert} \sim {\lvert} (\xi_1,q_1) {\rvert}, \ {\lvert} {\lvert} q_1 {\rvert} - {\lvert} q_0 {\rvert} {\rvert} \ll {\lvert} \xi_0 {\rvert}^{\frac{1}{3}}, \ \text{and} \ {\lvert} {\lvert} q_2 {\rvert} - {\lvert} q_3 {\rvert} {\rvert} \ll {\lvert} \xi_0 {\rvert}^{\frac{1}{3}},
\end{equation*}
and on the other hand, the sign configurations $(q_1,q_2,q_3,q_0) = (\pm,\mp,\mp,\mp)$. These constraints, however, can only hold simultaneously in the case ${\lvert} q_1 + q_2 {\rvert} \ll {\lvert} \xi_0 {\rvert}^{\frac{1}{3}}$, which would contradict ${\lvert} 3(\xi_1+\xi_2)^2-(q_1+q_2)^2 {\rvert} \ll {\lvert} \xi_1+\xi_2 {\rvert}$. We must therefore have ${\lvert} q_3 {\rvert} \ll {\lvert} (\xi_1,q_1) {\rvert}$ and consequently, by ${\lvert} \xi_2 + \xi_3 {\rvert} \ll {\lvert} \xi_0 {\rvert}^{\frac{1}{3}}$ and (ii.3), also ${\lvert} q_2 {\rvert} \ll {\lvert} (\xi_1,q_1) {\rvert}$.
Now, taking into account ${\lvert} q_1 {\rvert} \sim {\lvert} (\xi_1,q_1) {\rvert}$, it follows that $q_1$ and $q_0$ must have the same sign, and together with (ii.3) and ${\lvert} \xi_0-\xi_1 {\rvert} \ll {\lvert} \xi_0 {\rvert}^{\frac{1}{3}}$, this further implies that ${\lvert} q_2 + q_3 {\rvert} = {\lvert} q_0 - q_1 {\rvert} \ll {\lvert} \xi_0 {\rvert}^{\frac{1}{3}}$ must hold.
After passing to dyadic pieces once again and taking into account ${\lvert} \xi_0 {\rvert} \lesssim {\lvert} \xi_1 {\rvert}$, we then obtain - as in the previous discussion - the estimate
\begin{equation*}
{\lVert }\partial _{x} (u_{1} Q_{N_1}^{(\frac{1}{3})}(u_{2} u_{3})){\rVert }_{X_{s,-
\frac{1}{2}+2\epsilon }}  \lesssim N_1^{\frac{1}{2}+\frac{1}{3}-2s} \prod_{i=1}^{3} {\lVert} u_i {\rVert}_{X_{s,\frac{1}{2}+\epsilon}},
\end{equation*}
which yields the desired trilinear estimate \eqref{multilin modified ZK} after dyadic summation over all $N_1 \sim N_2 \sim N_3 \gg 1$, provided that $\epsilon > 0$ is chosen sufficiently small and
\begin{equation*}
\frac{1}{2}+\frac{1}{3}-2s < 0 \Leftrightarrow \frac{5}{12} < s
\end{equation*}
is satisfied. Thus, this subcase is also resolved. \\
(ii.3.3) \underline{${\lvert }\xi _{\text{min}}{\rvert } \sim {\lvert }\xi _{\text{med}}{\rvert } \sim {\lvert }\xi _{\text{max}}{\rvert }$}: \\
(ii.3.3.1) \underline{${\lvert} \xi_{\text{max}} {\rvert} \ll {\lvert} (\xi_1,q_1) {\rvert}$}: \\
In this situation, it follows from the active constraint (ii.3) that
\begin{equation*}
{\lvert} q_1 {\rvert} \sim {\lvert} q_2 {\rvert} \sim {\lvert} q_3 {\rvert} \sim {\lvert} q_0 {\rvert} \sim {\lvert} (\xi_1,q_1) {\rvert}, \ \text{with} \ {\lvert} {\lvert} q_i {\rvert} - {\lvert} q_j {\rvert} {\rvert} \ll {\lvert} (\xi_1,q_1) {\rvert} \ \forall i,j \in \{0,1,2,3\},
\end{equation*}
and this in turn implies that the $q_i$ cannot all have the same sign. Without loss of generality, let us assume that we have the following sign configuration: $(q_1,q_2,q_3,q_0) = (+,-,+,+)$. Then the assumption ${\lvert} \xi_1+\xi_3 {\rvert} \gtrsim {\lvert} \xi_0 {\rvert}^{\frac{1}{3}}$ allows us to infer the pointwise bound
\begin{align*}
{\lvert} \xi_0 {\rvert} \langle (\xi_0,q_0) \rangle^{s} &\lesssim {\lvert} \xi_1+\xi_3 {\rvert}^{\frac{1}{4}} \langle (\xi_1,q_1) \rangle^{s-} \langle (\xi_3,q_3) \rangle^{\frac{5}{12}+} {\lvert} \xi_0 + (-\xi_2) {\rvert}^{\frac{1}{4}} \langle (\xi_0,q_0) \rangle^{0-} \\ & \ \ \ \cdot \langle (-\xi_2,-q_2) \rangle^{\frac{5}{12}+},
\end{align*}
and undoing Plancherel, followed by Hölder's inequality and a subsequent application of \eqref{bilinrefinement} and \eqref{dual refinement}, yields
\begin{align*}
I_f &\lesssim {\lVert} I_{x}^{\frac{1}{4}}P^{1}(J^{s-}u_1 J^{\frac{5}{12}+} u_3) {\rVert}_{L_{txy}^{2}} {\lVert} I_{x}^{\frac{1}{4}}P^{1}(J^{0-}f J^{\frac{5}{12}+} \tilde{u}_2) {\rVert}_{L_{txy}^{2}}
\\ & \lesssim {\lVert} f {\rVert}_{X_{0,\frac{1}{2}-2\epsilon}} {\lVert} u_1 {\rVert}_{X_{s,\frac{1}{2}+\epsilon}} {\lVert} u_2 {\rVert}_{X_{\frac{5}{12}+,\frac{1}{2}+\epsilon}} {\lVert} u_3 {\rVert}_{X_{\frac{5}{12}+,\frac{1}{2}+\epsilon}}
\\ & \lesssim {\lVert} f {\rVert}_{X_{0,\frac{1}{2}-2\epsilon}} \prod_{i=1}^{3} {\lVert} u_i {\rVert}_{X_{s,\frac{1}{2}+\epsilon}},
\end{align*}
with the last step being valid for every $s > \frac{5}{12}$, provided $\epsilon > 0$ is chosen sufficiently small.
Here, the applicability of \eqref{bilinrefinement} and \eqref{dual refinement} is justified by the fact that, since ${\lvert} \xi_1+\xi_3 {\rvert} \ll {\lvert} (\xi_1,q_1) {\rvert}$ and ${\lvert} q_1+q_3 {\rvert} \sim {\lvert} (\xi_1,q_1) {\rvert}$, it follows that
\begin{equation*}
{\lvert} 3(\xi_1+\xi_3)^2 - (q_1+q_3)^2 {\rvert} \sim {\lvert} (\xi_1,q_1) {\rvert}^{2} \gg {\lvert} \xi_1+\xi_3 {\rvert}.
\end{equation*}
It therefore remains to be examined what happens in the case ${\lvert} \xi_1 + \xi_3 {\rvert} \ll {\lvert} \xi_0 {\rvert}^{\frac{1}{3}}$: Then it follows from (ii.3), from ${\lvert} \xi_0 - \xi_2 {\rvert} = {\lvert} \xi_1+\xi_3 {\rvert} \ll {\lvert} \xi_0 {\rvert}^{\frac{1}{3}}$, and from the sign configuration of the $q_i$, that ${\lvert} q_1-q_3 {\rvert} \ll {\lvert} \xi_0 {\rvert}^{\frac{1}{3}}$, ${\lvert} q_0 + q_2 {\rvert} \ll {\lvert} \xi_0 {\rvert}^{\frac{1}{3}}$, and thus ${\lvert} q_1+q_2 {\rvert} \ll {\lvert} \xi_0 {\rvert}^{\frac{1}{3}}$, which allows us to proceed exactly as in case (ii.3.2) (the active assumption (ii.3.3) allows the Airy $L_{tx}^4L_y^2$-estimate \eqref{Airy L4 from L2} to be applied beneficially four times here as well) to obtain the desired trilinear estimate for all $s > \frac{5}{12}$, provided $\epsilon > 0$ is chosen sufficiently small. \\
(ii.3.3.2) \underline{${\lvert} \xi_{\text{max}} {\rvert} \sim {\lvert} (\xi_1,q_1) {\rvert}$}: \\
(ii.3.3.2.1) \underline{${\lvert }\xi _{i}+\xi _{j}{\rvert } \gtrsim {\lvert }\xi _{\text{max}}{\rvert }\text{, for all }(i,j) \in {\{(1,2),(1,3),(2,3)\}}$}: \\
In this case, we have, due to the general assumption (ii.3), 
\begin{equation*}
{\lvert }R {\rvert } \gtrsim {\lvert }\xi _{\text{max}}{\rvert }^{3},
\end{equation*}
which allows us to control up to $\frac{3}{2}-$ derivatives by means of
the quantities $\langle \sigma _{i} \rangle $. Proceeding exactly as in
case (ii.3.1), we thus arrive at \eqref{multilin modified ZK}, and this
holds for every $s > \frac{1}{3}$, provided we choose
$\epsilon > 0$ sufficiently small. \\
(ii.3.3.2.2) \underline{${\lvert }\xi _{i}+\xi _{j}{\rvert } \gtrsim {\lvert }\xi _{\text{max}}{\rvert }\text{, for exactly two }(i,j) \in {\{(1,2),(1,3),(2,3)\}}$}: \\
Without loss of generality, we may assume that the sign pattern is $(\xi_1,\xi_2,\xi_3,\xi_0) = (+,-,+,+)$ with ${\lvert} \xi_1 + \xi_2 {\rvert} \gtrsim {\lvert} \xi_{\text{max}} {\rvert}$ and ${\lvert} \xi_2 + \xi_3 {\rvert} \ll {\lvert} \xi_{\text{max}} {\rvert}$.
If now ${\lvert} 3(\xi_1+\xi_3)^2 - (q_1+q_3)^2 {\rvert} \gtrsim {\lvert} \xi_1+\xi_3 {\rvert}$ or ${\lvert} 3(\xi_1+\xi_2)^2 - (q_1+q_2)^2 {\rvert} \gtrsim {\lvert} \xi_1+\xi_2 {\rvert}$, then the bilinear refinement of the linear $L^4$-estimate applies exactly as in case (ii.3.1) and yields $s > \frac{1}{4}$. Thus we may restrict ourselves to the situation where
\begin{equation*}
{\lvert} 3(\xi_1+\xi_3)^2 - (q_1+q_3)^2 {\rvert} \ll {\lvert} \xi_1+\xi_3 {\rvert}
\end{equation*}
and
\begin{equation*}
{\lvert} 3(\xi_1+\xi_2)^2 - (q_1+q_2)^2 {\rvert} \ll {\lvert} \xi_1+\xi_2 {\rvert}
\end{equation*}
hold. Furthermore, we may assume that ${\lvert} \xi_2 + \xi_3 {\rvert} \ll {\lvert} \xi_0 {\rvert}^{\frac{1}{3}}$, since otherwise, by (ii.3), we would have
\begin{equation*}
{\lvert} R {\rvert} \gtrsim {\lvert} \xi_0 {\rvert}^{\frac{1}{3}} {\lvert} \xi_{\text{max}} {\rvert}^{2},
\end{equation*}
so that the resonance function - again as in case (ii.3.1) - already leads to at least $s > \frac{1}{3}$. In addition to that, we must have ${\lvert} q_1 {\rvert} \sim {\lvert} (\xi_1,q_1) {\rvert}$ and ${\lvert} q_3 {\rvert} \sim {\lvert} (\xi_1,q_1) {\rvert}$, and because of (ii.3) together with ${\lvert} \xi_0 - \xi_1 {\rvert} = {\lvert} \xi_2 + \xi_3 {\rvert} \ll {\lvert} \xi_0 {\rvert}^{\frac{1}{3}}$, we then also obtain ${\lvert} q_2 {\rvert} \sim {\lvert} (\xi_1,q_1) {\rvert}$ and ${\lvert} q_0 {\rvert} \sim {\lvert} (\xi_1,q_1) {\rvert}$ with ${\lvert} {\lvert} q_1 {\rvert} - {\lvert} q_0 {\rvert} {\rvert} \ll {\lvert} \xi_0 {\rvert}^{\frac{1}{3}}$ and ${\lvert} {\lvert} q_2 {\rvert} - {\lvert} q_3 {\rvert} {\rvert} \ll {\lvert} \xi_0 {\rvert}^{\frac{1}{3}}$. Otherwise (taking into account (ii.3) and the signs of the $\xi_i$) we would have
\begin{equation*}
{\lvert} 3(\xi_1+\xi_3)^2 - (q_1+q_3)^2 {\rvert} \sim {\lvert} (\xi_1,q_1) {\rvert}^2 \gg {\lvert} \xi_1+\xi_3 {\rvert},
\end{equation*}
contradicting ${\lvert} 3(\xi_1+\xi_3)^2 - (q_1+q_3)^2 {\rvert} \ll {\lvert} \xi_1+\xi_3 {\rvert}$.
We are left with two cases:
\begin{enumerate}

\item[(1)] If $q_2$ and $q_3$ have opposite signs, then ${\lvert} q_2 + q_3 {\rvert} \ll {\lvert} \xi_0 {\rvert}^{\frac{1}{3}}$, and the smallness argument presented in case (ii.3.2) applies, yielding $s > \frac{5}{12}$ (the Airy $L_{tx}^4L_y^2$-estimate can be used four times due to the active assumption (ii.3.3)).

\item[(2)] If $q_2$ and $q_3$ have the same sign, then, since $q_1+q_2+q_3=q_0$, the quantities $q_0$ and $q_1$ must have opposite signs. Combining ${\lvert} q_2 - q_3 {\rvert} \ll {\lvert} \xi_0 {\rvert}^{\frac{1}{3}}$ with ${\lvert} q_0 + q_1 {\rvert} \ll {\lvert} \xi_0 {\rvert}^{\frac{1}{3}}$ then implies ${\lvert} q_1 + q_2 {\rvert} \ll {\lvert} \xi_0 {\rvert}^{\frac{1}{3}}$, so that we may again invoke the smallness argument to obtain $s > \frac{5}{12}$.
\end{enumerate}
(ii.3.3.2.3) \underline{${\lvert }\xi _{i}+\xi _{j}{\rvert } \gtrsim {\lvert }\xi _{\text{max}}{\rvert }\text{, for exactly one }(i,j) \in {\{(1,2),(1,3),(2,3)\}}$}: \\
We again consider, without loss of generality, the sign configuration $(\xi_1,\xi_2,\xi_3,\xi_0) = (+,-,+,+)$.
Since ${\lvert} \xi_0 {\rvert} \lesssim {\lvert} \xi_1+\xi_3 {\rvert}$, we may infer the pointwise bound
\begin{align*}
{\lvert} \xi_0 {\rvert} \langle (\xi_0,q_0) \rangle^s &\lesssim {\lvert} \xi_1+\xi_3 {\rvert}^{\frac{1}{24}} \langle (\xi_1,q_1) \rangle^{s-} \langle (\xi_3,q_3) \rangle^{\frac{11}{24}+} {\lvert} \xi_0 + (-\xi_2) {\rvert}^{\frac{1}{24}} \langle (\xi_0,q_0) \rangle^{0-} \\ & \ \ \ \cdot \langle (-\xi_2,-q_2) \rangle^{\frac{11}{24}+},
\end{align*}
and under the assumption ${\lvert} 3(\xi_1+\xi_3)^2-(q_1+q_3)^2 {\rvert} \gtrsim {\lvert} \xi_1+\xi_3 {\rvert}^{\frac{1}{6}}$, undoing Plancherel, followed by Hölder's inequality, and a subsequent application of \eqref{bilinrefinement} and \eqref{dual refinement} yields
\begin{align*}
I_f &\lesssim {\lVert} I_x^{\frac{1}{24}}P^{\frac{1}{6}}(J^{s-}u_1 J^{\frac{11}{24}+} u_3) {\rVert}_{L_{txy}^2} {\lVert} I_x^{\frac{1}{24}}P^{\frac{1}{6}}(J^{0-}f J^{\frac{11}{24}+} \tilde{u}_2) {\rVert}_{L_{txy}^2}
\\ & \lesssim {\lVert} f {\rVert}_{X_{0,\frac{1}{2}-2\epsilon}} {\lVert} u_1 {\rVert}_{X_{s,\frac{1}{2}+\epsilon}} {\lVert} u_2 {\rVert}_{X_{\frac{11}{24}+,\frac{1}{2}+\epsilon}} {\lVert} u_3 {\rVert}_{X_{\frac{11}{24}+,\frac{1}{2}+\epsilon}}
\\ &\lesssim {\lVert} f {\rVert}_{X_{0,\frac{1}{2}-2\epsilon}} \prod_{i=1}^{3} {\lVert} u_i {\rVert}_{X_{s,\frac{1}{2}+\epsilon}},
\end{align*}
with the last step requiring $s>\frac{11}{24}$ to hold. We may thus assume
\begin{equation} \label{bilinconstraint}
{\lvert} 3(\xi_1+\xi_3)^2 - (q_1+q_3)^2 {\rvert} \ll {\lvert} \xi_1+\xi_3 {\rvert}^{\frac{1}{6}}.
\end{equation}
From the general assumptions (ii.3), (ii.3.3), the signs of the $\xi_i$, ${\lvert} \xi_1+\xi_2 {\rvert} \ll {\lvert} \xi_0 {\rvert}$, and ${\lvert} \xi_2+\xi_3 {\rvert} \ll {\lvert} \xi_0 {\rvert}$, it then follows that
\begin{equation*}
{\lvert} q_1 {\rvert} \sim {\lvert} q_2 {\rvert} \sim {\lvert} q_3 {\rvert} \sim {\lvert} q_0 {\rvert} \sim {\lvert} (\xi_1,q_1) {\rvert} \ \text{with} \ {\lvert} {\lvert} q_i {\rvert} - {\lvert} q_j {\rvert} {\rvert} \ll {\lvert} \xi_0 {\rvert} \ \forall i,j \in \{0,1,2,3\},
\end{equation*}
and the constraint \eqref{bilinconstraint} additionally forces the sign patterns $(q_1,q_2,q_3,q_0) = \\ (\pm,\mp,\pm,\pm)$.
If now
\begin{equation*}
{\lvert} \xi_1+\xi_2 {\rvert} \ll {\lvert} \xi_0 {\rvert}^{\frac{5}{12}} \ \text{or} \ {\lvert} \xi_2+\xi_3 {\rvert} \ll {\lvert} \xi_0 {\rvert}^{\frac{5}{12}},
\end{equation*}
then by (ii.3), taking into account the signs of the $q_i$, we also obtain
\begin{equation*}
{\lvert} q_1+q_2 {\rvert} \ll {\lvert} \xi_0 {\rvert}^{\frac{5}{12}} \ \text{or} \ {\lvert} q_2+q_3 {\rvert} \ll {\lvert} \xi_0 {\rvert}^{\frac{5}{12}}.
\end{equation*}
Passing to dyadic pieces (cf. the smallness argument in case (ii.3.1) and case (ii.3.2)) then yields
\begin{equation*}
{\lVert }\partial _{x} (Q_{N_1}^{(\frac{5}{12})}(u_{1} u_{2}) u_{3}){\rVert }_{X_{s,-
\frac{1}{2}+2\epsilon }}  \lesssim N_1^{\frac{1}{2}+\frac{5}{12}-2s} \prod_{i=1}^{3} {\lVert} u_i {\rVert}_{X_{s,\frac{1}{2}+\epsilon}}
\end{equation*}
or
\begin{equation*}
{\lVert }\partial _{x} (u_{1} Q_{N_1}^{(\frac{5}{12})}(u_{2} u_{3})){\rVert }_{X_{s,-
\frac{1}{2}+2\epsilon }}  \lesssim N_1^{\frac{1}{2}+\frac{5}{12}-2s} \prod_{i=1}^{3} {\lVert} u_i {\rVert}_{X_{s,\frac{1}{2}+\epsilon}},
\end{equation*}
and after dyadic summation this gives the desired trilinear estimate \eqref{multilin modified ZK}, provided
\begin{equation*}
\frac{1}{2}+\frac{5}{12}-2s < 0 \Leftrightarrow \frac{11}{24} < s
\end{equation*}
and $\epsilon > 0$ is sufficiently small. We may therefore assume
\begin{equation*}
{\lvert} \xi_1+\xi_2 {\rvert} \gtrsim {\lvert} \xi_0 {\rvert}^{\frac{5}{12}} \ \text{and} \ {\lvert} \xi_2+\xi_3 {\rvert} \gtrsim {\lvert} \xi_0 {\rvert}^{\frac{5}{12}}.
\end{equation*}
Moreover, the assumption ${\lvert} q_1 - q_3 {\rvert} \gtrsim {\lvert} \xi_0 {\rvert}^{\frac{3}{4}}$ implies, by (ii.3), that ${\lvert} \xi_1 - \xi_3 {\rvert} \gtrsim {\lvert} \xi_0 {\rvert}^{\frac{3}{4}}$, and therefore also
\begin{equation*}
{\lvert} \xi_1+\xi_2 {\rvert} \gtrsim {\lvert} \xi_0 {\rvert}^{\frac{3}{4}} \ \text{or} \ {\lvert} \xi_2+\xi_3 {\rvert} \gtrsim {\lvert} \xi_0 {\rvert}^{\frac{3}{4}},
\end{equation*}
as otherwise we would have
\begin{equation*}
{\lvert} \xi_0 {\rvert}^{\frac{3}{4}} \lesssim {\lvert} \xi_1 - \xi_3 {\rvert} \leq {\lvert} \xi_1 + \xi_2 {\rvert} + {\lvert} \xi_2 + \xi_3 {\rvert} \ll {\lvert} \xi_0 {\rvert}^{\frac{3}{4}}.
\end{equation*}
In this case, however, since ${\lvert} \xi_1+\xi_2 {\rvert}, {\lvert} \xi_1+\xi_2 {\rvert} \gtrsim {\lvert} \xi_0 {\rvert}^{\frac{5}{12}}$, we obtain
\begin{equation*}
{\lvert} \xi_1+\xi_2 {\rvert} {\lvert} \xi_2+\xi_3 {\rvert} {\lvert} \xi_1+\xi_3 {\rvert} \gtrsim {\lvert} \xi_0 {\rvert}^{\frac{13}{6}},
\end{equation*}
and together with (ii.3) this yields
\begin{equation*}
{\lvert} R {\rvert} \gtrsim {\lvert} \xi_0 {\rvert}^{\frac{13}{6}}.
\end{equation*}
As previously discussed, this already suffices to obtain at least $s > \frac{1}{3}$, so that we can also assume ${\lvert} q_1 - q_3 {\rvert} \ll {\lvert} \xi_0 {\rvert}^{\frac{3}{4}}$. Moreover, these considerations work entirely analogously for the pairing $ q_0 + q_2$, which allows us to include ${\lvert} q_0 + q_2 {\rvert} \ll {\lvert} \xi_0 {\rvert}^{\frac{3}{4}}$ among our remaining active constraints as well.
As a final preparatory step for our last argument, we write
\begin{equation*}
f = P_{N} f, \ u_{1} = P_{N_{1}} u_{1}, \ u_{2} = P_{N_{2}}u_{2}, \
\text{and} \ u_{3} = P_{N_{3}} u_{3}
\end{equation*}
for dyadic numbers $N \sim N_1 \sim N_2 \sim N_3 \gg 1$, which allows us to summarize all the relevant remaining constraints in the form:
\begin{equation} \label{lastconstraints}
{\lvert} \sqrt{3}(\xi_1+\xi_3) \mp (q_1+q_3) {\rvert} \ll N^{-\frac{5}{6}}, \ {\lvert} q_1 - q_3 {\rvert} \ll N^{\frac{3}{4}}, \ \text{and} \ {\lvert} q_0 + q_2 {\rvert} \ll N^{\frac{3}{4}}.
\end{equation}
Let us now define, for $\alpha, \beta \in \mathbb{R}$ and dyadic $N \gg 1$, the bilinear restriction operator $S_N^{(\alpha,\beta)}$ by means of its Fourier transform:
\begin{align*}
&\widehat{S_N^{(\alpha,\beta)}(u,v)}(\tau,\xi,q) \coloneqq \\ & \int_{\mathbb{R}^2} \sum_{q_1 \in \mathbb{Z}} \chi_{\{{\lvert} \sqrt{3}\xi \mp q {\rvert} \lesssim N^\alpha\}} \chi_{\{{\lvert} 2q_1 - q {\rvert} \lesssim N^\beta\}} \hat{u}(\tau_1,\xi_1,q_1) \hat{v}(\tau-\tau_1,\xi-\xi_1,q-q_1) \mathrm{d}(\tau_1,\xi_1).
\end{align*}
Then, taking into account
\begin{equation*}
{\lvert} \xi_0 {\rvert} \langle (\xi_0,q_0) \rangle^{s} \lesssim N^{1-2s+} \langle (\xi_1,q_1) \rangle^{s} \langle (\xi_3,q_3) \rangle^{s} \langle (\xi_0,q_0) \rangle^{0-} \langle (\xi_2,q_2) \rangle^{s}
\end{equation*}
and the constraints in \eqref{lastconstraints}, it follows - by undoing Plancherel and an application of Hölder's inequality - that
\begin{equation*}
I_f \lesssim N^{1-2s+} {\lVert} S_N^{(-\frac{5}{6},\frac{3}{4})}(J^su_1,J^su_3) {\rVert}_{L_{txy}^2} {\lVert} S_N^{(-\frac{5}{6},\frac{3}{4})}(J^{0-}f,J^{s}\tilde{u}_2) {\rVert}_{L_{txy}^2},
\end{equation*}
and it remains to appropriately estimate the two $L^2$-norms.
To this end, let $b > \frac{1}{2}$ be given arbitrarily. Then, by duality, and after applying Plancherel's theorem, followed by the Cauchy-Schwarz inequality and Fubini's theorem, we obtain
\begin{equation*}
{\lVert} S_N^{(-\frac{5}{6},\frac{3}{4})}(u,v) {\rVert}_{L_{txy}^2} \lesssim \brBigg{\sup_{(\tau_1,\xi_1,q_1) \in \mathbb{R} \times \mathbb{R} \times \mathbb{Z}} c_0^{\frac{1}{2}}} {\lVert} u {\rVert}_{X_{0,b}} {\lVert} v {\rVert}_{X_{0,b}}
\end{equation*}
with
\begin{align*}
c_0 &= \langle \tau_1 - \phi(\xi_1,q_1) \rangle^{-2b} \\ & \ \ \ \cdot \sum_{\substack{q \in \mathbb{Z} \\ {\lvert} 2q_1 - q {\rvert} \lesssim N^{\frac{3}{4}}}} \int_{\mathbb{R}} \chi_{\{{\lvert} \sqrt{3}\xi \mp q {\rvert} \lesssim N^{-\frac{5}{6}}\}} \int_{\mathbb{R}} \langle \tau - \tau_1 - \phi(\xi-\xi_1,q-q_1) \rangle^{-2b} \mathrm{d}\tau \mathrm{d}\xi,
\end{align*}
and since $b > \frac{1}{2}$, the integral over $\tau$ is finite. Consequently, we have
\begin{align*}
c_0 &\lesssim_b \sum_{\substack{q \in \mathbb{Z} \\ {\lvert} 2q_1 - q {\rvert} \lesssim N^{\frac{3}{4}}}} \int_{\mathbb{R}} \chi_{\{{\lvert} \sqrt{3}\xi \mp q {\rvert} \lesssim N^{-\frac{5}{6}}\}} \mathrm{d}\xi
\\ & \lesssim N^{-\frac{5}{6}} \sum_{\substack{q \in \mathbb{Z} \\ {\lvert} 2q_1 - q {\rvert} \lesssim N^{\frac{3}{4}}}} 1 \\ & \lesssim N^{\frac{3}{4}-\frac{5}{6}}
\\ & = N^{-\frac{1}{12}},
\end{align*}
and this ultimately establishes the estimate
\begin{equation} \label{S_Nestimate}
{\lVert} S_N^{(-\frac{5}{6},\frac{3}{4})}(u,v) {\rVert}_{L_{txy}^2} \lesssim_b N^{-\frac{1}{24}} {\lVert} u {\rVert}_{X_{0,b}} {\lVert} v {\rVert}_{X_{0,b}}, \quad b > \frac{1}{2}.
\end{equation}
Bilinear interpolation of \eqref{S_Nestimate} with the trivial bound
\begin{equation*}
{\lVert} S_N^{(-\frac{5}{6},\frac{3}{4})}(u,v) {\rVert}_{L_{txy}^2} \lesssim {\lVert} u {\rVert}_{X_{1+,\frac{1}{4}+}} {\lVert} v {\rVert}_{X_{0,\frac{1}{4}+}}
\end{equation*}
then gives
\begin{equation} \label{dual_S_Nestimate}
{\lVert} S_N^{(-\frac{5}{6},\frac{3}{4})}(u,v) {\rVert}_{L_{txy}^2} \lesssim N^{-\frac{1}{24}+} {\lVert} u {\rVert}_{X_{0+,\frac{1}{2}-}} {\lVert} v {\rVert}_{X_{0,\frac{1}{2}-}},
\end{equation}
thereby allowing us to resume the calculations above: An application of \eqref{S_Nestimate} to the first factor and of \eqref{dual_S_Nestimate} to the second factor leads us to
\begin{equation*}
I_f \lesssim N^{\frac{11}{12}-2s+} {\lVert} f {\rVert}_{X_{0,\frac{1}{2}-2\epsilon}} \prod_{i=1}^{3} {\lVert} u_i {\rVert}_{X_{s,\frac{1}{2}+\epsilon}},
\end{equation*}
and a dyadic summation over all $N \sim N_1 \sim N_2 \sim N_3 \gg 1$ finally gives the desired trilinear estimate \eqref{multilin modified ZK}, provided that
\begin{equation*}
\frac{11}{12} - 2s + < 0 \Leftrightarrow \frac{11}{24} + < s
\end{equation*}
is satisfied. By choosing $\epsilon > 0$ sufficiently small, any $s > \frac{11}{24}$ can be achieved and this concludes the discussion of this subcase. With this, all possible cases have been addressed, and the proof of Proposition \ref{mZK prop} is complete.
\end{proof}

\ew

\bibliographystyle{amsplain}

\bibliography{references}

\end{document}